\author{Robert J Taggart}
\title{Inhomogeneous Strichartz estimates}
\date{28 February 2008}
\newtheorem{theorem}{Theorem}[section]
\newtheorem{corollary}[theorem]{Corollary}
\newtheorem{lemma}[theorem]{Lemma}
\newtheorem{proposition}[theorem]{Proposition}
\theoremstyle{definition}
\newtheorem{definition}[theorem]{Definition}
\theoremstyle{remark}
\newtheorem{remark}[theorem]{Remark}
\newcommand{\Hi}{\mathcal{H}}
\newcommand{\R}{\mathbb{R}}
\newcommand{\Z}{\mathbb{Z}}
\newcommand{\A}{\mathcal{A}}
\newcommand{\B}{\mathcal{B}}
\newcommand{\C}{\mathcal{C}}
\newcommand{\Q}{\mathcal{Q}}
\newcommand{\Bth}{\mathcal{B}_{\theta}}
\newcommand{\Bwth}{\mathcal{B}_{\wt{\theta}}}
\newcommand{\bs}{\backslash}
\newcommand{\norm}[1]{\left\Vert{#1}\right\Vert}
\newcommand{\wt}[1]{\widetilde{#1}}
\newcommand{\dd}{\mathrm{d}}
\newcommand{\dist}{\mathrm{dist}}
\newcommand{\st}[2]{L^{#1}(\R;#2)}
\newcommand{\stI}[2]{L^{#1}(I;#2)}
\newcommand{\stJ}[2]{L^{#1}(J;#2)}
\newcommand{\stn}[2]{_{\st{#1}{#2}}}
\newcommand{\stnI}[2]{_{\stI{#1}{#2}}}
\newcommand{\stnJ}[2]{_{\stJ{#1}{#2}}}
\newcommand{\bes}[3]{\dot{B}_{#2,#3}^{#1}}
\newcommand{\besn}[3]{_{\dot{B}_{#2,#3}^{#1}}}
\newcommand{\sob}[2]{\dot{H}_{#2}^{#1}}
\newcommand{\hsob}[1]{\dot{H}^{#1}}
\newcommand{\hsobn}[1]{_{\dot{H}^{#1}}}
\def\ip<#1,#2>{\left\langle#1,\,#2\right\rangle}
\def\nn<#1>{\left\langle#1\right\rangle}
\begin{document}

\maketitle

\begin{abstract}
\noindent We present abstract inhomogeneous Strichartz estimates for dispersive operators, extending previous work by M. Keel and T. Tao on the one hand, and generalising results of D. Foschi, M. Vilela, M. Nakamura and T. Ozawa on the other hand. It is shown that these abstract estimates imply new inhomogeneous Strichartz estimates for the wave equation and some Schr\"odinger equations involving potentials.
\end{abstract}

%%%%%%%%%%%%%%%%%%%%%%%%%%%%%%%%%%%%%%%%%%%%%%%%%%%%%%%%%%%%%%%%%%%%
\section{Introduction}\label{s:intro}
%%%%%%%%%%%%%%%%%%%%%%%%%%%%%%%%%%%%%%%%%%%%%%%%%%%%%%%%%%%%%%%%%%%%

This paper is concerned with a priori estimates for dispersive partial differential equations, expressed in norms given by
\[\norm{G}\stn{q}{\B}=\left(\int_{\R}\norm{G(t)}_{\B}^q\,\dd t\right)^{1/q},\]
where $\B$ is a Banach space. Consider, for example, the inhomogeneous Schr\"odinger initial value problem
\begin{equation}\label{IVP:schrodinger intro}
\begin{cases}
&iu'(t)+\Delta u(t)=F(t)\qquad\forall t\geq0\\
&u(0)=f,
\end{cases}
\end{equation}
whose formal solution $u$ is given by
\[u(t)=e^{it\Delta}f-i\int_0^te^{i(t-s)\Delta}F(s)\,\dd s\]
via Duhamel's principle. The seminal paper \cite{rS77} of R. Strichartz showed that if $u$ is a solution to (\ref{IVP:schrodinger intro}) in $n$ spatial dimensions and $q=r=2(n+2)/n$, then
\begin{equation}\label{est:rS77 schrodinger estimate}
\norm{u}\stn{q}{L^r(\R^n)}\lesssim\norm{f}_{L^2(\R^n)}+\norm{F}\stn{q'}{L^{r'}(\R^n)}
\end{equation}
whenever $f\in L^2(\R^n)$ and $F\in \st{q'}{L^{r'}(\R^n)}$. Since then, various authors (see especially \cite{GV85a}, \cite{kY87}, \cite{CW88} and \cite{KT98}) have published similar a priori estimates for solutions to Schr\"odinger's equation where the time exponent $q$ and the space exponent $r$ are unequal. Such estimates have proved fruitful for determining whether various semilinear Schr\"odinger equations are well-posed (see, for example, \cite{GV85a}, \cite{tK87} and \cite{CW90}).

By respectively taking $F$ and $f$ as $0$ in (\ref{IVP:schrodinger intro}), estimate (\ref{est:rS77 schrodinger estimate}) becomes
\begin{equation}\label{est:homogeneous schrodinger}
\norm{e^{it\Delta}f}\stn{q}{L^r(\R^n)}\lesssim\norm{f}_{L^2(\R^n)}
\end{equation}
and
\begin{equation}\label{est:inhomogeneous schrodinger}
\norm{\int_0^te^{i(t-s)\Delta}F(s)\,\dd s}\stn{q}{L^r(\R^n)}\lesssim\norm{F}\stn{q'}{L^{r'}(\R^n)}.
\end{equation}
The first of these estimates is called an \textit{homogeneous Strichartz estimate} (since it solves the homogeneous Schr\"odinger equation with initial data $f$) and the second is known as an \textit{inhomogeneous Strichartz estimate} (since it solves the inhomogeneous Schr\"odinger equation with forcing term $F$ and zero initial data). The problem of finding all possible exponents pairs $(q,r)$ such that (\ref{est:homogeneous schrodinger}) is valid has been completely solved. That is, (\ref{est:homogeneous schrodinger}) holds if and only if
\begin{equation}\label{eq:schrodinger admissible}
q\in[2,\infty],\qquad\frac{1}{q}+\frac{n}{2r}=\frac{n}{4}\qquad\mbox{and}\qquad(q,r,n)\neq(2,\infty,2)
\end{equation}
(see \cite{KT98} and the references therein). The corresponding problem for the inhomogeneous Strichartz estimate (\ref{est:inhomogeneous schrodinger}) remains open. It is known that if the exponent pairs $(q,r)$ and $(\wt{q},\wt{r})$ satisfy (\ref{eq:schrodinger admissible}) then the inhomogeneous estimate (\ref{est:inhomogeneous schrodinger}) is also valid. However, it was observed by T. Cazenave and F. Weissler \cite{CW92} and T. Kato \cite{tK94} that there are exponent pairs $(q,r)$ for which the inhomogeneous Strichartz estimate holds but the homogeneous estimate fails. Using the techniques introduced in \cite{KT98}, D. Foschi \cite{dF05a} and M. Vilela \cite{mV07} independently obtained what is currently the largest known range of exponent pairs $(q,r)$ and $(\wt{q},\wt{r})$ for which the inhomogeneous Strichartz estimate (\ref{est:inhomogeneous schrodinger}) for the Schr\"odinger equation is valid.

Similar remarks may be made for the wave equation. The precise set of Lebesgue spacetime exponents for which the homogeneous Strichartz estimate for the wave equation is known (see \cite{LS95}, \cite[Section 1]{KT98} and the references therein) while a complete description for the set of exponents for which the inhomogeneous estimate is valid remains open (see the early work of D. Oberlin \cite{dO89} and J. Harmse \cite{jH90} and more recent advances by Foschi \cite{dF05a}). It must also be noted that, previous to the work of Foschi, a large set of exponents for inhomogeneous Strichartz-type estimates for solutions of the Klein--Gordon equation were obtained by M. Nakamura and T. Ozawa \cite{NO01}.

In this paper, results of \cite{NO01}, \cite{mV07} and especially \cite{dF05a} are generalised to the abstract setting introduced in \cite{KT98}, thus enabling us to find new inhomogeneous Strichartz-type estimates for other dispersive equations, including the wave equation and Schr\"odinger equations with potential. Suppose that $\Hi$ is a Hilbert space with inner product $\ip<\,\cdot\,,\cdot\,>$, $(\B_0,\B_1)$ is a Banach interpolation couple and $\sigma>0$. Suppose also that for each time $t$ in $\R$ we have an operator $U(t):\Hi\to\B_0^*$. Its adjoint $U(t)^*$ is an operator from $\B_0$ to $\Hi$. We will assume that the family $\{U(t):t\in\R\}$ satisfies the \textit{energy estimate}
\begin{equation}\label{est:energy}
\norm{U(t)f}_{\B_0^*}\lesssim\norm{f}_{\Hi} \qquad\forall f\in\Hi\quad\forall t\in\R,
\end{equation}
and the \textit{dispersive estimate}
\begin{equation}\label{est:untrunc_decay}
\norm{U(s)U(t)^*g}_{\B_1^*}\lesssim|t-s|^{-\sigma}\norm{g}_{\B_1}
\qquad\forall g\in \B_1\cap\B_0 \quad \forall \mbox{ real } s\neq t.
\end{equation}
Using the energy estimate, we consider the operator $T:\Hi\to \st{\infty}{\B_0^*}$, defined by the formula
\[Tf(t)=U(t)f\qquad\forall f\in\Hi\quad\forall t\in\R.\]
Its formal adjoint $T^*:\st{1}{\B_0}\to\Hi$ is given by the $\Hi$-valued integral
\[T^*F=\int_{\R}U(s)^*F(s)\,\dd s.\]
The composition $TT^*:\st{1}{\B_0}\to\st{\infty}{\B_0^*}$, given by
\[TT^*F(t)=\int_{\R}U(t)U(s)^*F(s)\,\dd s,\]
can be decomposed as the sum of retarded and advanced parts, respectively given by
\[(TT^*)_RF(t)=\int_{s<t}U(t)U(s)^*F(s)\,\dd s\]
and
\[(TT^*)_AF(t)=\int_{s>t}U(t)U(s)^*F(s)\,\dd s.\]
In applications (see Sections \ref{s:schrodinger} and \ref{s:wave} for examples), $\{U(t):t\in\R\}$ is the evolution family associated to a homogeneous differential equation, $T$ solves the initial value problem of the homogeneous equation and $(TT^*)_R$ solves the corresponding inhomogeneous problem with zero initial data. Hence, if $\Bth$ denotes the real interpolation space $(\B_0,\B_1)_{\theta,2}$ whenever $\theta\in[0,1]$, then corresponding homogeneous and inhomogeneous Strichartz estimates are given by
\begin{equation}\label{eq:str1}
\norm{Tf}\stn{q}{\Bth^*}\lesssim\norm{f}_{\Hi}\qquad\forall f\in\Hi
\end{equation}
and
\begin{equation}\label{eq:str3}
\norm{(TT^*)_RF}\stn{q}{\Bth^*}\lesssim\norm{F}\stn{\wt{q}'}{\Bwth}
\qquad\forall F\in\st{\wt{q}'}{\Bwth}\cap\st{1}{\B_0}.
\end{equation}
The following theorem of M. Keel and T. Tao gives conditions on the exponent pairs $(q,\theta)$ and $(\wt{q},\wt{\theta})$ such that these abstract Strichartz estimates hold.

\begin{definition}
Suppose that $\sigma>0$. We say that a pair of exponents $(q,\theta)$ is \textit{sharp $\sigma$-admissible} if $(q,\theta,\sigma)\neq(2,1,1)$, $2\leq q\leq \infty$, $0\leq\theta\leq1$ and  $\frac{1}{q}=\frac{\sigma\theta}{2}$.
\end{definition}

\begin{theorem}[Keel--Tao, Theorem 10.1 \cite{KT98}]\label{th:KT98} Suppose that $\sigma>0$. If $\{U(t):t\in\R\}$ satisfies the energy estimate (\ref{est:energy}) and the dispersive estimate (\ref{est:untrunc_decay}) then the Strichartz estimates (\ref{eq:str1}) and (\ref{eq:str3}) hold for all sharp $\sigma$-admissible pairs $(q,\theta)$ and $(\wt{q},\wt{\theta})$.
\end{theorem}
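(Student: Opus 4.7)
The plan is to follow the approach of Keel and Tao, reducing everything to a single pointwise dispersive bound on the interpolation scale, applying Hardy--Littlewood--Sobolev in the non-endpoint regime, the Christ--Kiselev lemma for the inhomogeneous estimate off the double endpoint, and a delicate bilinear real-interpolation argument at the double endpoint itself.

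First I would combine (\ref{est:energy}) with its adjoint to obtain $\norm{U(t)U(s)^*g}_{\B_0^*}\lesssim\norm{g}_{\B_0}$, and then real-interpolate this (with parameter $2$) against (\ref{est:untrunc_decay}), using the standard duality for real interpolation spaces, to deduce the pointwise decay
\[\norm{U(t)U(s)^*g}_{\Bth^*}\lesssim |t-s|^{-\sigma\theta}\norm{g}_{\Bth}\qquad\forall\,\theta\in[0,1].\]
This is the single quantitative ingredient driving the rest of the argument.

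By a $TT^*$/duality argument, the homogeneous estimate (\ref{eq:str1}) at $(q,\theta)$ is equivalent to the diagonal bilinear inequality
\[\left|\int_{\R}\int_{\R}\ip<U(t)U(s)^*F(s),G(t)>\,\dd s\,\dd t\right|\lesssim\norm{F}\stn{q'}{\Bth}\norm{G}\stn{q'}{\Bth}.\]
When $q>2$, this follows directly from the pointwise decay and the Hardy--Littlewood--Sobolev inequality, since the admissibility relation $1/q=\sigma\theta/2$ is exactly the scaling that makes the fractional integration kernel $|t-s|^{-\sigma\theta}$ map $L^{q'}(\R)$ into $L^q(\R)$. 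For the inhomogeneous estimate (\ref{eq:str3}) away from the double endpoint, I would first compose the homogeneous estimates for $(q,\theta)$ and $(\wt{q},\wt{\theta})$ to obtain the unrestricted bound
\[\norm{TT^*F}\stn{q}{\Bth^*}\lesssim\norm{F}\stn{\wt{q}'}{\Bwth},\]
and then invoke the Christ--Kiselev lemma to transfer this bound to the retarded operator $(TT^*)_R$. Christ--Kiselev requires $\wt{q}'<q$, equivalently $1/q+1/\wt{q}<1$, which holds for all sharp admissible pairs except $q=\wt{q}=2$.

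The remaining case is the double endpoint $q=\wt{q}=2$, $\theta=\wt{\theta}=1/\sigma$, admissible only when $\sigma>1$; here Hardy--Littlewood--Sobolev fails by a logarithm and Christ--Kiselev is unavailable. Following Keel--Tao, I would dyadically decompose
\[(TT^*)_RF(t)=\sum_{j\in\Z}T_jF(t),\qquad T_jF(t)=\int_{2^j\leq t-s<2^{j+1}}U(t)U(s)^*F(s)\,\dd s,\]
prove a family of off-diagonal bilinear bounds of the form $|\ip<T_jF,G>|\lesssim 2^{-j\beta(\theta_1,\theta_2)}\norm{F}\stn{2}{\mathcal{B}_{\theta_1}}\norm{G}\stn{2}{\mathcal{B}_{\theta_2}}$ for pairs $(\theta_1,\theta_2)$ surrounding the critical one in such a way that the exponent $\beta$ is positive on one side of the endpoint and negative on the other, and finally apply a bilinear real-interpolation lemma to sum the dyadic pieces and land at the critical indices. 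The main obstacle is precisely this double endpoint: producing the four off-diagonal bilinear estimates with the correct sign of $\beta$ and running the bilinear interpolation on the real-method scale $\Bth$ is the only genuinely difficult step, the rest being a routine package of duality, interpolation, and Christ--Kiselev.
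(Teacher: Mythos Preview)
The paper does not prove Theorem~\ref{th:KT98}; it is quoted verbatim as Theorem~10.1 of \cite{KT98} and used as a black box throughout. So there is no proof in the paper to compare your proposal against.

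That said, your outline is a faithful sketch of the Keel--Tao argument, with one deviation worth noting: you invoke the Christ--Kiselev lemma to pass from the full $TT^*$ bound to the retarded operator $(TT^*)_R$ in the non-double-endpoint case. This is a valid and by-now-standard shortcut, but it is not what \cite{KT98} actually does; Keel and Tao work directly with the retarded bilinear form via the same dyadic Whitney decomposition that handles the endpoint, and the Christ--Kiselev simplification postdates their paper. In the present paper's framework (Sections~\ref{s:local estimates}--\ref{s:global estimates}), the retarded estimate is likewise obtained without Christ--Kiselev, by summing local bilinear estimates over a Whitney decomposition of $\{s<t\}$. Your approach is cleaner for the non-endpoint inhomogeneous case; the Whitney/bilinear-interpolation route has the advantage of being uniform across all cases and of extending naturally to the non-admissible exponents treated in Theorem~\ref{th:inhomogeneous}.
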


The range of exponents given by Theorem \ref{th:KT98} for which the homogeneous estimate (\ref{eq:str1}) is valid cannot be improved. (One can show this by considering the case when $U(t)=e^{it\Delta}$, $\sigma=n/2$, $\Hi=L^2(\R^n)$ and $(\B_0,\B_1)=(L^2(\R^n),L^1(\R^n))$, which corresponds to setting of the Schr\"odinger equation; see, for example, \cite[Section 8]{KT98}.) However, as noted in \cite{KT98}, the range of exponents given by Theorem \ref{th:KT98} for the inhomogeneous estimate (\ref{eq:str3}) is suboptimal. One of the aims of this paper is to extend this range. This has already been achieved by D. Foschi \cite[Theorem 1.4]{dF05a} for the special case when $(\B_0,\B_1)=(L^2(X),L^1(X))$. The following theorem, introduced after Definition \ref{def:sigma-acceptable}, generalises Foschi's result and is the main theorem of our paper.

\begin{definition}\label{def:sigma-acceptable}
Suppose that $\sigma>0$. We say that a pair $(q,\theta)$ of exponents is \textit{$\sigma$-acceptable} if either
\[1\leq q<\infty,\quad0\leq\theta\leq1,\quad\frac{1}{q}<\sigma\theta\]
or $(q,\theta)=(\infty,0)$.
\end{definition}

If $(\B_0,\B_1)$ is a Banach interpolation couple then write $\B_{\theta,q}$ for the real interpolation space $(\B_0,\B_1)_{\theta,q}$.

\begin{theorem}\label{th:inhomogeneous}
Suppose that $\sigma>0$ and that $\{U(t):t\in\R\}$ satisfies the energy estimate (\ref{est:energy}) and the dispersive estimate (\ref{est:untrunc_decay}). Suppose also that the exponents pairs $(q,\theta)$ and $(\wt{q},\wt{\theta})$ are $\sigma$-acceptable and satisfy the scaling condition
\begin{equation}\label{eq:inhomogeneous scaling condition}
\frac{1}{q}+\frac{1}{\wt{q}}=\frac{\sigma}{2}(\theta+\wt{\theta}).
\end{equation}
\begin{enumerate}
\item[(i)] If
\begin{equation}\label{eq:nonsharp q}
\frac{1}{q}+\frac{1}{\wt{q}}<1,
\end{equation}
\begin{equation}\label{eq:nonsharp theta}
(\sigma-1)(1-\theta)\leq\sigma(1-\wt{\theta}),\qquad(\sigma-1)(1-\wt{\theta})\leq\sigma(1-\theta),
\end{equation}
and, in the case when $\sigma=1$, we have $\theta<1$ and $\wt{\theta}<1$, then the inhomogeneous Strichartz estimate (\ref{eq:str3}) holds.
\item[(ii)] If $q,\wt{q}\in(1,\infty)$
\begin{equation}\label{eq:sharp q}
\frac{1}{q}+\frac{1}{\wt{q}}=1
\end{equation}
and
\begin{equation}\label{eq:sharp theta}
(\sigma-1)(1-\theta)<\sigma(1-\wt{\theta}),\qquad(\sigma-1)(1-\wt{\theta})<\sigma(1-\theta)
\end{equation}
then the inhomogeneous Strichartz estimate
\begin{equation}\label{eq:str4}
\norm{(TT^*)_RF}\stn{q}{(\B_{\theta,q'})^*}\lesssim\norm{F}\stn{\wt{q}'}{\B_{\wt{\theta},\wt{q}'}}
\qquad\forall F\in\st{\wt{q}'}{\B_{\wt{\theta},\wt{q}'}}\cap\st{1}{\B_0}
\end{equation}
holds.
\end{enumerate}
\end{theorem}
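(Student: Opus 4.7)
The plan is to extend Foschi's argument for the Schr\"odinger equation to the abstract interpolation-couple setting considered here. By duality, it suffices to bound the bilinear form
\[
B(F,G)=\iint_{s<t}\ip<U(t)U(s)^*F(s),G(t)>\,\dd s\,\dd t
\]
on the product space appearing on the right-hand side of (\ref{eq:str3}) or (\ref{eq:str4}). I would split $B=\sum_{j\in\Z}B_j$ along the dyadic annuli $\{2^j\le t-s<2^{j+1}\}$, and, especially in Part~(ii), further decompose $F$ and $G$ using the atomic decomposition of the real interpolation spaces $\Bwth$ and $\Bth$ (respectively their Lorentz refinements $\B_{\wt\theta,\wt q'}$ and $\B_{\theta,q'}$) into pieces lying in $\B_0\cap\B_1$.

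On each dyadic piece, the energy estimate~(\ref{est:energy}) and the dispersive estimate~(\ref{est:untrunc_decay}) give, for $i\in\{0,1\}$, the pointwise kernel bound
\[
|\ip<U(t)U(s)^*F(s),G(t)>|\lesssim c_i(j)\norm{F(s)}_{\B_i}\norm{G(t)}_{\B_i},
\]
where $c_0(j)=1$ and $c_1(j)=2^{-j\sigma}$. Integrating against the indicator of the annulus and applying Young's inequality in time yields a family of endpoint bilinear estimates $|B_j(F,G)|\lesssim 2^{j\alpha_{i,i'}}\norm{F}\stn{p_i}{\B_i}\norm{G}\stn{r_{i'}}{\B_{i'}}$ indexed by $(i,i')\in\{0,1\}^2$ and admissible exponents $(p_i,r_{i'})$. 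Combining these with the atomic decompositions reduces matters to a trilinear sum over $(j,k,k')$, where $k,k'$ index the atoms of $F$ and $G$; the scaling condition~(\ref{eq:inhomogeneous scaling condition}) forces the three summation variables to satisfy a single linear relation.

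For Part~(i), the inequalities in~(\ref{eq:nonsharp theta}) are precisely those required for the geometric sum in $j$ to converge at both $j\to+\infty$ and $j\to-\infty$ after optimising over the choice of endpoint pair $(i,i')$, while the strict inequality~(\ref{eq:nonsharp q}) permits a strong triangle inequality in the remaining $(k,k')$ summation. The excluded case $\sigma=1$ with $\theta=1$ or $\wt\theta=1$ is exactly the borderline where a logarithmic divergence appears. Part~(ii) is the genuinely sharp case $1/q+1/\wt q=1$: the strong summation in $(k,k')$ now fails, and the output and input norms must be enlarged to $(\B_{\theta,q'})^*$ and $\B_{\wt\theta,\wt q'}$ so that a weak-type Schur argument, in the style of the Keel--Tao endpoint proof, delivers the required bound; the strict inequalities~(\ref{eq:sharp theta}) are exactly what is needed to make this endpoint summation converge. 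The main obstacle is Part~(ii), where matching the Lorentz index $q'$ from the atomic decomposition against the summation exponent obtained from the bilinear interpolation requires a careful restricted weak-type argument rather than the routine Schur test that handles Part~(i).
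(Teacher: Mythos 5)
Your outline matches the paper's structure up to the crucial summation step, but there you propose to extend Foschi's $p$-atomic decomposition to the abstract interpolation spaces $\Bth$ and $\B_{\theta,q'}$ — and this is precisely the step the paper identifies as unavailable in the general Banach-couple setting. In $L^p$ one decomposes $f$ into atoms $2^k\chi_{E_k}$ with disjoint supports whose $L^1$, $L^\infty$ and $L^p$ norms are all simultaneously controlled; that structure is what makes Foschi's trilinear bookkeeping over $(j,k,k')$ close. An arbitrary real interpolation space $(\B_0,\B_1)_{\theta,q}$ has only the $K$-functional decomposition, which produces pieces with controlled $\B_0$ and $\B_1$ norms but no analogue of disjoint support or of the $L^p$-normalisation that powers Foschi's Schur-type summation. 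Your proposal does not explain how to replace this, and this — not the Lorentz-index bookkeeping in Part~(ii) — is the real obstacle.

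The paper sidesteps atomic decomposition entirely. After the Whitney decomposition $B=\sum_{\lambda\in2^{\Z}}\sum_{Q\in\Q_\lambda}B_Q$ (equivalent to your annular decomposition), Proposition~4.1 bundles together all squares of a fixed side $\lambda$, giving
\[
\sum_{Q\in\Q_\lambda}|B_Q(F,G)|\lesssim\lambda^{\beta(q,\theta;\wt q,\wt\theta)}\norm{F}\stn{\wt q'}{\Bwth}\norm{G}\stn{q'}{\Bth}
\]
whenever the local conditions of Lemma~3.1 hold. The remaining sum over $\lambda$ is then handled abstractly: one forms the sequence-valued bilinear map $\wt B(F,G)=\{\sum_{Q\in\Q_{2^{-j}}}B_Q(F,G)\}_{j\in\Z}$, notes it lands boundedly in weighted $\ell^\infty_\beta$ spaces for perturbed exponents, and applies bilinear real interpolation (Lemma~5.1) together with the identity $(\ell^\infty_{s_0},\ell^\infty_{s_1})_{\eta,1}=\ell^1_s$ to conclude $\wt B$ maps into $\ell^1_0$. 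Lemma~5.2 perturbs the time exponents $q,\wt q$ (giving Part~(i)); Lemma~5.3 perturbs the spatial exponents $\theta,\wt\theta$ (giving Part~(ii)). This is shorter than Foschi's argument and is agnostic to the structure of $\B_0,\B_1$, which is the whole point of the abstract generalisation. In short, the trilinear sum you describe is exactly what one should be avoiding, not organising.

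A minor further point: you attribute the local bounds to "Young's inequality in time," but the sharp $\sigma$-admissible input (the point $Q$ in Figure~1(a), needed to reach the full region $AOEDB$) requires the Keel--Tao endpoint theorem, which the paper invokes through the dual homogeneous estimate; Young or Hardy--Littlewood--Sobolev alone only covers the non-endpoint admissible line.
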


\begin{remark}
Suppose that the scaling condition (\ref{eq:inhomogeneous scaling condition}) holds. Then the exponent conditions appearing in (i) and (ii) above are always satisfied if $\sigma<1$ or if $\sigma=1$, $\theta<1$ and $\wt{\theta}<1$.
\end{remark}

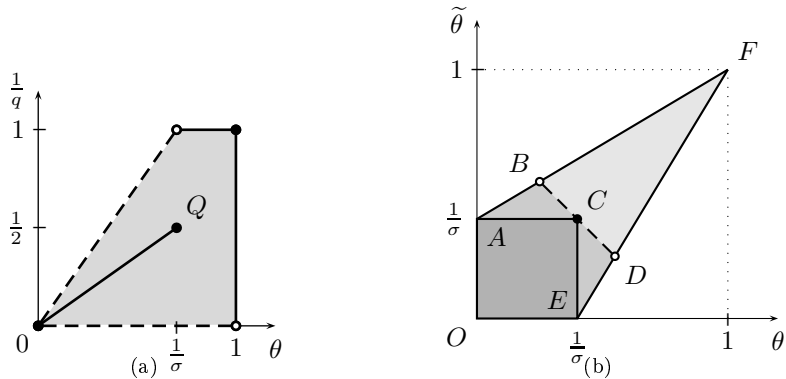
\begin{figure}
\centering
\subfigure[]
{
\begin{pspicture}(-.1,-.2)(3,3)
	\psset{unit=2.6cm}
	%axes, main lines and shading
	\newgray{gray2}{.85}
	\pspolygon[linecolor=white, fillcolor=gray2, fillstyle=solid](0,0)(.7,1)(1,1)(1,0)
	\psline[linewidth=.5pt]{->}(0,0)(0,1.2)
	\psline[linewidth=.5pt]{o->}(1,0)(1.2,0)
	\psline[linewidth=1pt]{*-*}(0,0)(.7,.5)
	\psline[linewidth=1pt, linestyle=dashed]{*-o}(0,0)(1,0)
	\psline[linewidth=1pt]{o-*}(1,0)(1,1)
	\psline[linewidth=1pt, linestyle=dashed]{*-o}(0,0)(.7,1)
	\psline[linewidth=1pt]{o-*}(.7,1)(1,1)
	% labels and scale
	\psdots[dotstyle=|](.7,0)
	\psdots[dotstyle=|, dotangle=90](0,.5)(0,1)
	\uput[180](0,.5){$\frac{1}{2}$}
	\uput[270](.7,0){$\frac{1}{\sigma}$}
	\uput[dl](0,0){$0$}
	\uput[45](.7,.5){$Q$}
	\uput[l](0,1){$1$}
	\uput[d](1,0){$1$}
	\uput[d](1.2,0){$\theta$}
	\uput[l](0,1.2){$\frac{1}{q}$}
\end{pspicture}
}
\hspace*{2cm}
\subfigure[]
{
\begin{pspicture}(-.3,-.3)(3.6,3.6)
	\psset{unit=3.3cm}
	%colours
	\newgray{gray1}{.9}
	\newgray{gray2}{.8}
	\newgray{gray3}{.7}
	%axes
	\psline[linewidth=.4pt]{->}(0,0)(0,1.2)
	\psline[linewidth=.4pt]{->}(0,0)(1.2,0)
	%axis labels	
	\uput[l](0,1){$1$}
	\uput[d](1,0){$1$}
	\uput[d](1.2,0){$\theta$}
	\uput[l](0,1.2){$\wt{\theta}$}
	\uput[d](.4,0){$\frac{1}{\sigma}$}
	\uput[l](0,.4){$\frac{1}{\sigma}$}
	%polygons	
	\pspolygon[linestyle=none, fillcolor=gray2, fillstyle=solid](.4,0)(.4,.4)(.55,.25)
	\pspolygon[linestyle=none, fillcolor=gray2, fillstyle=solid](0,.4)(.25,.55)(.4,.4)
	\pspolygon[linestyle=none, fillcolor=gray1, fillstyle=solid](.55,.25)(.25,.55)(1,1)
	\pspolygon[linewidth=.8pt, fillcolor=gray3, fillstyle=solid](0,0)(.4,0)(.4,.4)(0,.4)
	%polygon boundaries
	\psline[linewidth=.8pt](.4,0)(.55,.25)
	\psline[linewidth=.8pt](0,.4)(.25,.55)
	\psline[linewidth=.8pt](.55,.25)(1,1)
	\psline[linewidth=.8pt](1,1)(.25,.55)
	\psline[linewidth=.8pt,linestyle=dashed]{o-o}(.55,.25)(.25,.55)
	% point C
	\psdot(.4,.4)
	%dotted lines
	\psline[linewidth=.5pt, linestyle=dotted](1,0)(1,1)(0,1)
	%axes markers
	\psdots[dotstyle=|](1,0)
	\psdots[dotstyle=|, dotangle=90](0,1)
	%vertex labels
	\uput[225](0,0){$O$}
	\uput[315](0,.4){$A$}
	\uput[135](.25,.55){$B$}
	\uput[45](.4,.4){$C$}
	\uput[315](.55,.25){$D$}
	\uput[135](.4,0){$E$}
	\uput[45](1,1){$F$}
\end{pspicture}
}
\caption{Range of exponents for Theorem \ref{th:inhomogeneous} when $\sigma>1$.}
\label{fig:inhomogeneous exponents}
\end{figure}

The closure in the cube $[0,1]^4$ of the set of points $(1/q,\theta,1/\wt{q},\wt{\theta})$  that satisfy the hypotheses of Theorem \ref{th:inhomogeneous} forms a convex polyhedral solid. Projections of this set onto the $\theta q^{-1}$--plane and the $\theta\wt{\theta}$--plane, as well as the improvement of Theorem \ref{th:inhomogeneous} over Theorem \ref{th:KT98}, are shown in Figure \ref{fig:inhomogeneous exponents}. In Figure \ref{fig:inhomogeneous exponents} (a), the closed line segment $OQ$ corresponds to sharp $\sigma$-admissible pairs $(q,\theta)$, while the shaded region represents the set of $\sigma$-acceptable pairs. The region $AOEDB$ in Figure \ref{fig:inhomogeneous exponents} (b) illustrates pairs $(\theta,\wt{\theta})$ that correspond to  inhomogeneous Strichartz estimates given by Theorem \ref{th:inhomogeneous}. In contrast, the region $AOEC$ represents pairs $(\theta,\wt{\theta})$ that correspond to valid exponents for Theorem \ref{th:KT98}.

The proof of Theorem \ref{th:inhomogeneous} is given in Sections \ref{s:preliminaries} to \ref{s:global estimates}, using techniques introduced in \cite{KT98} and adapting part of the argument of \cite{dF05a}. Section \ref{s:preliminaries} states some preliminary results that are used in later sections. Section \ref{s:local estimates} presents inhomogeneous Strichartz estimates that are localised in time. In particular, we prove a local estimate corresponding to the point $F$ in Figure \ref{fig:inhomogeneous exponents} (b), and then interpolate between $F$ and the square $AOEC$ (which corresponds to local estimates implied by Theorem \ref{th:KT98}) to obtain local estimates for exponents in the region $AOEF$. The global inhomogeneous estimates of Theorem \ref{th:inhomogeneous} are then obtained by decomposing the operator $(TT^*)_R$ dyadically as a sum of operators (see Section \ref{s:dyadic}) and estimating each term in the sum by a local inhomogeneous estimate. The summability of the local estimates is obtained in Sections \ref{s:dyadic} and \ref{s:global estimates} by imposing further conditions on the exponents, from which we deduce global inhomogeneous estimates in the region $AOEDB$.

The sharpness of Theorem \ref{th:inhomogeneous} is discussed in Section \ref{s:sharpness}. The rest of the paper is devoted to applications of Theorem \ref{th:inhomogeneous}. In Section \ref{s:schrodinger}, we indicate how the results of Vilela \cite{mV07} and Foschi \cite{dF05a} for the Schr\"odinger equation may be recovered from Theorem \ref{th:inhomogeneous}. It is then shown that, for a certain class of potentials, our generalisation allows one to obtain new Strichartz estimates for Schr\"odinger equations that cannot be deduced from the more specialised theorem of \cite{dF05a}. In Section \ref{s:wave}, new inhomogeneous Strichartz-type estimates are obtained for the wave equation by using homogeneous Besov spaces, in the same spirit as those presented by J. Ginibre and G. Velo \cite{GV95}. Finally, we indicate how one derives Strichartz estimates for the Klein--Gordon equation, thus recovering all the inhomogeneous estimates found by Nakamura and Ozawa \cite{NO01}, as well as giving some new `boundary' estimates.

%%%%%%%%%%%%%%%%%%%%%%%%%%%%%%%%%%%%%%%%%%%%%%%%%%%%%%%%%%%%%%%%%%%%%%%%%
\section{Some preliminaries}\label{s:preliminaries}
%%%%%%%%%%%%%%%%%%%%%%%%%%%%%%%%%%%%%%%%%%%%%%%%%%%%%%%%%%%%%%%%%%%%%%%%%

In this section we give some basic tools that will be used in Theorem \ref{th:inhomogeneous}. First we introduce a scaling invariance result that generalises the observation in \cite[Section 1]{KT98}. Second, we present a bilinear formulation of the inhomogeneous Strichartz estimate (\ref{eq:str3}). Finally, we make some remarks on real and complex interpolation of  vector-valued Lebesgue spaces.

In Section \ref{s:local estimates}, a scaling argument will be used to simplify the proof of Theorem \ref{th:inhomogeneous}. The hypotheses (\ref{est:energy}) and (\ref{est:untrunc_decay}) are invariant under many rescalings; we regard the one introduced below to be the simplest.

\begin{proposition}\label{prop:scaling}
If $\lambda>0$ then the estimates (\ref{est:energy}) and (\ref{est:untrunc_decay}) are invariant under the scaling
\begin{equation}\label{scaling}
\begin{cases}
U(t)&\leftarrow U(t/\lambda)\\
\ip<f,g>&\leftarrow\ip<f,g>\\
\norm{f}_{\B_0}&\leftarrow\norm{f}_{\B_0}\\
\norm{f}_{\B_1}&\leftarrow\lambda^{\sigma/2}\norm{f}_{\B_1}.
\end{cases}
\end{equation}
\end{proposition}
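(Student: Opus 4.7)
The plan is to verify that the two estimates literally transform into themselves under the substitution, by computing how each quantity on both sides rescales.

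First, let me set up the notation. Write $U_\lambda(t) = U(t/\lambda)$, and denote by $\norm{\cdot}_{\B_i,\mathrm{new}}$ the rescaled norms. Since the $\B_0$ norm is unchanged, its dual norm is unchanged as well, so $\norm{\cdot}_{\B_0^*,\mathrm{new}} = \norm{\cdot}_{\B_0^*}$. For $\B_1$, multiplying the norm by $\lambda^{\sigma/2}$ divides the dual norm by the same factor, giving $\norm{\cdot}_{\B_1^*,\mathrm{new}} = \lambda^{-\sigma/2}\norm{\cdot}_{\B_1^*}$. Next, since the inner product on $\Hi$ and the pairing between $\B_0$ and $\B_0^*$ are both unchanged, the adjoint operator $U(t)^*$ is the same whether computed with respect to the original or the new data, so $U_\lambda(t)^* = U(t/\lambda)^*$ as a map $\B_0 \to \Hi$.

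For the energy estimate, the verification is immediate: replacing $t$ by $t/\lambda$ runs through the same set of real numbers, and the norms $\norm{\cdot}_{\B_0^*}$ and $\norm{\cdot}_{\Hi}$ are unchanged, so
\[
\norm{U_\lambda(t)f}_{\B_0^*,\mathrm{new}} = \norm{U(t/\lambda)f}_{\B_0^*} \lesssim \norm{f}_\Hi
\]
by the original (\ref{est:energy}).

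For the dispersive estimate, I combine the rescaled dual norm on $\B_1^*$ with the time dilation in the $|s-t|^{-\sigma}$ factor:
\[
\norm{U_\lambda(s)U_\lambda(t)^*g}_{\B_1^*,\mathrm{new}}
= \lambda^{-\sigma/2}\norm{U(s/\lambda)U(t/\lambda)^*g}_{\B_1^*}
\lesssim \lambda^{-\sigma/2}\left|\tfrac{s-t}{\lambda}\right|^{-\sigma}\norm{g}_{\B_1}
= |s-t|^{-\sigma}\cdot\lambda^{\sigma/2}\norm{g}_{\B_1},
\]
and the final factor is exactly $\norm{g}_{\B_1,\mathrm{new}}$, so (\ref{est:untrunc_decay}) is reproduced. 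The two powers of $\lambda$ cancel precisely because of the coupling between the exponent $\sigma$ in the dispersive bound and the choice $\lambda^{\sigma/2}$ in the rescaling of $\B_1$; this is the only nontrivial check and is what forces the specific power $\sigma/2$ in the statement. Since nothing here is really an obstacle, the proposition reduces to bookkeeping of dual norms.
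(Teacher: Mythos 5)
Your argument is correct and matches the paper's approach: the paper reduces the verification to a lemma on how dual norms rescale (the first of the two lemmata following the proposition), then leaves the substitution $t\mapsto t/\lambda$ and the cancellation of powers of $\lambda$ as a routine check, which is exactly what you carry out explicitly. Your observation that $U(t)^*$ is unchanged because both $\ip<\cdot,\cdot>$ and the $\B_0$--$\B_0^*$ pairing are fixed is a point the paper leaves implicit, and it is worth stating.
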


The proposition can be easily verified using the following two lemmata.

\begin{lemma}
Suppose that $\B$ is a Banach space and $\lambda>0$. Then the scaling
\begin{equation}\label{eq:dual scaling}
\norm{f}_{\B}\leftarrow\lambda\norm{f}_{\B}\qquad\forall f\in\B
\end{equation}
induces the scaling
\[\norm{\phi}_{\B^*}\leftarrow\lambda^{-1}\norm{\phi}_{\B^*}\qquad\forall \phi\in\B^*.\]
\end{lemma}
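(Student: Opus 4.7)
The plan is to unpack the definition of the dual norm and track how rescaling the norm on $\B$ by a factor of $\lambda$ affects the unit ball, and hence the supremum defining the norm on $\B^*$.

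First I would write the dual norm explicitly as
\[\norm{\phi}_{\B^*}=\sup\{|\phi(f)|:f\in\B,\ \norm{f}_{\B}\leq1\}.\]
Denote by $\norm{\cdot}_{\B}'$ the rescaled norm, so that $\norm{f}_{\B}'=\lambda\norm{f}_{\B}$ for every $f\in\B$. The new unit ball of $\B$ is
\[\{f\in\B:\norm{f}_{\B}'\leq1\}=\{f\in\B:\norm{f}_{\B}\leq\lambda^{-1}\},\]
so taking the supremum over this set and substituting $f=\lambda^{-1}g$ with $\norm{g}_{\B}\leq1$ gives
\[\norm{\phi}_{(\B,\norm{\cdot}_\B')^*}=\sup_{\norm{f}_\B\leq\lambda^{-1}}|\phi(f)|=\lambda^{-1}\sup_{\norm{g}_\B\leq1}|\phi(g)|=\lambda^{-1}\norm{\phi}_{\B^*},\]
which is exactly the claimed induced scaling on $\B^*$.

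There is essentially no obstacle here; the only thing to be slightly careful about is the linearity of $\phi$, which is used in pulling the factor $\lambda^{-1}$ out of $\phi(\lambda^{-1}g)$. The argument works verbatim whether the scalar field is $\R$ or $\CC$, and no properties of $\B$ beyond it being a normed space are needed.
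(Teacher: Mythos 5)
Your proof is correct and is the direct unit-ball argument one would expect here; the paper simply states this lemma without proof (remarking that the scaling proposition "can be easily verified"), so your proposal supplies precisely the omitted verification, with the change of variables $f=\lambda^{-1}g$ and the linearity of $\phi$ doing the work.
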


The second lemma examines the effect of scaling (\ref{scaling}) on real interpolated spaces. One way of constructing a real interpolation space from an interpolation couple $(\B_0,\B_1)$ is known as \textit{the K-method} \cite[1.3]{hT78}. When $0<t<\infty$ and $b\in \B_0+\B_1$, define $K$ by the formula
\[K(t,b,\B_0,\B_1)=\inf_{b=b_0+b_1}(\norm{b_0}_{\B_0}+t\norm{b_1}_{\B_1}).\]
If $\theta\in(0,1)$ and $q\in[1,\infty)$, then we construct the interpolation space $(\B_0,\B_1)_{\theta,q}$ by
\[(\B_0,\B_1)_{\theta,q}=\left\{b\in \B_0+\B_1:
\norm{b}_{(\B_0,\B_1)_{\theta,q}}<\infty\right\}\]
where
\[\norm{b}_{(\B_0,\B_1)_{\theta,q}}=\left(\int_0^{\infty}\Big(t^{-\theta}K(t,b,\B_0,\B_1)\Big)^q\,\frac{\dd t}{t}\right)^{1/q}.\]

\begin{lemma}
If $\theta\in[0,1]$ then scaling (\ref{scaling}) implies the scaling
\[\norm{f}_{\Bth}\leftarrow\lambda^{\sigma\theta/2}\norm{f}_{\Bth}.\]
\end{lemma}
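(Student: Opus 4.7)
The plan is to apply the K-method definition of the real interpolation norm given in the excerpt and track how the scaling on $\norm{\cdot}_{\B_1}$ propagates through the $K$-functional and then through the integral defining the interpolation norm.

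First I would compute the effect of (\ref{scaling}) on the $K$-functional. Since the scaling leaves $\norm{\cdot}_{\B_0}$ unchanged and multiplies $\norm{\cdot}_{\B_1}$ by $\lambda^{\sigma/2}$, the new $K$-functional $K'$ satisfies
\[
K'(t,b) = \inf_{b=b_0+b_1}\bigl(\norm{b_0}_{\B_0} + t\lambda^{\sigma/2}\norm{b_1}_{\B_1}\bigr) = K(\lambda^{\sigma/2}t, b, \B_0, \B_1)
\]
for every $t>0$ and every $b\in\B_0+\B_1$. This is just a renaming of the parameter~$t$.

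Next, for $\theta\in(0,1)$, I would substitute this identity into the defining integral for the $\Bth=(\B_0,\B_1)_{\theta,2}$ norm and perform the change of variables $s=\lambda^{\sigma/2}t$, which leaves $\dd t/t$ invariant. This gives
\[
\norm{b}^2_{\Bth,\mathrm{new}} = \int_0^{\infty}\bigl(t^{-\theta}K(\lambda^{\sigma/2}t,b,\B_0,\B_1)\bigr)^2\,\frac{\dd t}{t} = \lambda^{\sigma\theta}\int_0^{\infty}\bigl(s^{-\theta}K(s,b,\B_0,\B_1)\bigr)^2\,\frac{\dd s}{s},
\]
so that $\norm{b}_{\Bth,\mathrm{new}} = \lambda^{\sigma\theta/2}\norm{b}_{\Bth}$, as required.

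Finally I would dispose of the endpoints $\theta=0$ and $\theta=1$. By the standard convention that $(\B_0,\B_1)_{0,2} = \B_0$ and $(\B_0,\B_1)_{1,2}=\B_1$, the claimed scaling factors $\lambda^0=1$ and $\lambda^{\sigma/2}$ in these cases are immediate from the third and fourth lines of~(\ref{scaling}) respectively. There is no real obstacle in this lemma; the only thing to be careful about is the direction of the rescaling in the $K$-functional substitution, since an error there would put the exponent of $\lambda$ on the wrong side.
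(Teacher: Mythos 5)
Your proof is correct and follows essentially the same route as the paper: identify the rescaled $K$-functional as $K(\lambda^{\sigma/2}t,\cdot)$, substitute into the $K$-method integral, and change variables. Your explicit treatment of the endpoints $\theta=0,1$ is a small addition the paper leaves implicit, but there is no substantive difference.
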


\begin{proof} 
Whenever $\vartheta\in[0,1]$, let $\B_{\vartheta}'$ denote the Banach space induced from $\B_{\vartheta}$ under scaling (\ref{scaling}). Suppose that $f\in\Bth$. It is clear that
\[K(t,f,\B_0',\B_1')=K(\lambda^{\sigma/2}t,f,\B_0,\B_1)\]
whenever $t>0$. Hence
\begin{align*}
\norm{f}_{\Bth'}
&=\left(\int_0^{\infty}\Big(t^{-\theta}K(t,f,\B_0',\B_1')\Big)^2\,\frac{\dd t}{t}\right)^{1/2}\\
&=\left(\int_0^{\infty}\Big(t^{-\theta}K(\lambda^{\sigma/2}t,f,\B_0,\B_1)\Big)^2\,\frac{\dd t}{t}\right)^{1/2}\\
&=\left(\int_0^{\infty}\Big(\big(\lambda^{-\sigma/2}s\big)^{-\theta}K(s,f,\B_0,\B_1)\Big)^2\,\frac{\dd s}{s}\right)^{1/2}\\
&=\lambda^{\sigma\theta/2}\norm{f}_{\Bth}
\end{align*}
and the lemma follows.
\end{proof}

Following the lead of \cite{KT98}, the Strichartz estimate expressed in (\ref{eq:str3}) as an operator estimate will be re-expressed as a bilinear estimate, thus facilitating flexibility in manipulation and interpolation. When $\B$ is not the Hilbert space $\Hi$, denote by $\ip<f,g>_{\B}$ the action of a linear functional $g$ on an element $f$ of $\B$. Suppose that $F$ and $G$ are in $\st{1}{\B_0}$. Then
\begin{align*}
\ip<(TT^*)_RF,G>\stn{\infty}{\B_0^*}
&=\int_{\R}\ip<U(t)\int_{-\infty}^tU(s)^*F(s)\,\dd s,G(t)>_{\B_0^*}\,\dd t\\
&=\iint_{s<t}\ip<U(s)^*F(s),U(t)^*G(t)>\,\dd s\,\dd t.
\end{align*}
Now define the bilinear form $B$ on $\st{1}{\B_0}\times\st{1}{\B_0}$ by
\begin{equation}\label{eq:T(F,G)}
B(F,G)=\iint_{s<t}\ip<U(s)^*F(s),U(t)^*G(t)>\,\dd s\,\dd t.
\end{equation}
It is not hard to prove the following proposition.

\begin{proposition}\label{prop:retarded estimate equiv bilinear}
Suppose that $q,\wt{q}\in[1,\infty]$ and $\theta,\wt{\theta}\in[0,1]$. Then the inhomogeneous Strichartz estimate (\ref{eq:str3}) is equivalent to the bilinear estimate
\begin{multline}\label{est:bilinear,retarded equiv}
|B(F,G)|\lesssim\norm{F}\stn{\wt{q}'}{\Bwth}\norm{G}\stn{q'}{\Bth}\\
\qquad\forall F\in\st{\wt{q}'}{\Bwth}\cap\st{1}{\B_0} \quad \forall G\in\st{q'}{\Bwth}\cap\st{1}{\B_0},
\end{multline}
where the bilinear form $B$ is given by (\ref{eq:T(F,G)}).
\end{proposition}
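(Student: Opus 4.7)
The identity
\[
\ip<(TT^*)_RF,G>\stn{\infty}{\B_0^*}=B(F,G)\qquad\forall F,G\in\st{1}{\B_0}
\]
has already been established in the display immediately preceding the proposition, by unfolding the definition of $(TT^*)_R$ and Fubini's theorem (which applies because for $F,G\in\st{1}{\B_0}$ the integrand $\ip<U(s)^*F(s),U(t)^*G(t)>$ is controlled in modulus by $\norm{F(s)}_{\B_0}\norm{G(t)}_{\B_0}$ via Cauchy--Schwarz in $\Hi$ and the energy estimate (\ref{est:energy})). Given this identity, I will deduce the equivalence of (\ref{eq:str3}) and (\ref{est:bilinear,retarded equiv}) by the standard duality argument for vector-valued Lebesgue spaces.

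For the forward direction, assume (\ref{eq:str3}). Pick $F\in\st{\wt{q}'}{\Bwth}\cap\st{1}{\B_0}$ and $G\in\st{q'}{\Bth}\cap\st{1}{\B_0}$. By the identity above together with the natural pairing of $\Bth$ with $\Bth^*$ and H\"older's inequality in the time variable,
\[
|B(F,G)|=|\ip<(TT^*)_RF,G>\stn{\infty}{\B_0^*}|\leq\norm{(TT^*)_RF}\stn{q}{\Bth^*}\norm{G}\stn{q'}{\Bth},
\]
and (\ref{eq:str3}) bounds the first factor by $\norm{F}\stn{\wt{q}'}{\Bwth}$, yielding (\ref{est:bilinear,retarded equiv}).

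For the converse, assume (\ref{est:bilinear,retarded equiv}) and fix $F\in\st{\wt{q}'}{\Bwth}\cap\st{1}{\B_0}$. By the energy estimate, $(TT^*)_RF(t)\in\B_0^*$ for each $t$, and the $\Bth^*$-norm of this function can be computed by duality: for any dense subspace $D$ of $\st{q'}{\Bth}$,
\[
\norm{(TT^*)_RF}\stn{q}{\Bth^*}=\sup\!\left\{\frac{|\ip<(TT^*)_RF,G>\stn{\infty}{\B_0^*}|}{\norm{G}\stn{q'}{\Bth}}\;:\;G\in D,\;G\neq0\right\}.
\]
Choosing $D$ to consist of simple $(\Bth\cap\B_0)$-valued functions of compact time support (which is dense in $\st{q'}{\Bth}$ for $q'<\infty$, and a separate check handles the case $q'=\infty$), every such $G$ also lies in $\st{1}{\B_0}$, so the identity converts the numerator into $|B(F,G)|$, and (\ref{est:bilinear,retarded equiv}) yields the operator estimate (\ref{eq:str3}).

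\textbf{Main obstacle.} The technical issue is purely in the density step of the converse: one must exhibit a class of test functions $G$ that is simultaneously dense in $\st{q'}{\Bth}$ and contained in $\st{1}{\B_0}$ so that the identity $\ip<(TT^*)_RF,G>=B(F,G)$ applies. The endpoint $q'=\infty$ requires a mild extra argument via weak-$*$ density or restriction to intervals of finite measure; away from that endpoint, simple functions suffice. No interpolation or delicate analysis is needed, and the rest is book-keeping.
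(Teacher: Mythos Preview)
Your proposal is correct and follows the natural duality route; the paper itself does not supply a proof at all, merely remarking that the proposition ``is not hard to prove,'' so your argument is exactly the kind of verification the author is leaving to the reader. The one place you flag but do not fully resolve is the endpoint $q'=\infty$ (i.e.\ $q=1$): there the cleanest fix is to compute $\norm{(TT^*)_RF}\stn{1}{\Bth^*}$ by first restricting to a bounded time interval $I$, testing against step functions $G$ with $\sup_t\norm{G(t)}_{\Bth}\le1$ and values in $\Bth\cap\B_0$ (so that $G\in L^1(I;\B_0)$ automatically), applying the bilinear bound, and then letting $I\uparrow\R$ by monotone convergence.
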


Finally, standard results of real and complex interpolation for the spaces $L^q(\R;\Bth)$ will be used several times throughout the next three sections. Readers unfamiliar with the results used are directed to \cite{BL76} and \cite{hT78}. Attention is drawn specifically to \cite[Theorem 4.7.2]{BL76}, \cite[Theorem 5.1.2]{BL76}, \cite[p. 130]{BL76}, \cite[p.76]{BL76} and \cite[p. 128]{hT78}. The negative result of M. Cwikel \cite{mC76} shows the limitations for real interpolation of such spaces.

%%%%%%%%%%%%%%%%%%%%%%%%%%%%%%%%%%%%%%%%%%%%%%%%%%%%%%%%%%%%%%%%%%%%%%%%%
\section{Local inhomogeneous Strichartz estimates}\label{s:local estimates}
%%%%%%%%%%%%%%%%%%%%%%%%%%%%%%%%%%%%%%%%%%%%%%%%%%%%%%%%%%%%%%%%%%%%%%%%%

Our proof of Theorem \ref{th:inhomogeneous} spans the next three sections, the first two of which closely follow \cite[Sections 2 and 3]{dF05a}. The main result of this section gives the existence of localised inhomogeneous Strichartz estimates. The following lemma is a preliminary version of this result.

\begin{lemma}\label{lem:local inhomogeneous}
Suppose that $\sigma>0$ and that $\{U(t):t\in\R\}$ satisfies the energy estimate (\ref{est:energy}) and the dispersive estimate (\ref{est:untrunc_decay}). Assume also that $I$ and $J$ are two time intervals of unit length separated by a distance of scale $1$ (that is, $|I|=|J|=1$ and $\dist(I,J)\approx1$). Then the local inhomogeneous Strichartz estimate
\begin{equation}\label{est:local inhomogeneous}
\norm{TT^*F}\stnJ{q}{\Bth^*}\lesssim\norm{F}\stnI{\wt{q}'}{\Bwth}\qquad\forall F\in\stI{\wt{q}'}{\Bwth}\cap\stI{1}{\B_0}
\end{equation}
holds whenever the pairs $(q,\theta)$ and $(\wt{q},\wt{\theta})$ satisfy the conditions
\begin{align}
q,\wt{q}\in[1,\infty]&,\qquad \theta,\wt{\theta}\in[0,1],\label{eq:local1}\\
(\sigma-1)(1-\theta)\leq\sigma(1-\wt{\theta})&,\qquad(\sigma-1)(1-\wt{\theta})\leq\sigma(1-\theta),\label{eq:local2}\\
\frac{1}{q}\geq\frac{\sigma}{2}(\theta-\wt{\theta})&,\qquad\frac{1}{\wt{q}}\geq\frac{\sigma}{2}(\wt{\theta}-\theta)
\label{eq:local3}.
\end{align}
If $\sigma=1$ then $\theta$ and $\wt{\theta}$ must be strictly less than $1$.
\end{lemma}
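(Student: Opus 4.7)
The strategy follows the template of Keel--Tao and Foschi: dualise into a bilinear estimate, collect a short list of vertex bilinear estimates, and interpolate. By the same duality computation as in Proposition \ref{prop:retarded estimate equiv bilinear} (the restriction $s<t$ is automatic since $F$ and $G$ are supported on disjoint intervals $I$ and $J$), inequality (\ref{est:local inhomogeneous}) is equivalent to the bilinear bound $|B(F,G)|\lesssim\|F\|\stnI{\wt{q}'}{\Bwth}\|G\|\stnJ{q'}{\Bth}$ for all $F$ supported on $I$ and $G$ supported on $J$, with $B$ defined as in (\ref{eq:T(F,G)}).

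I would then establish three families of vertex bilinear estimates. \emph{(A, energy.)} Cauchy--Schwarz in $\Hi$ combined with two applications of (\ref{est:energy}) gives the pointwise bound $|\ip<U(s)^*F(s),U(t)^*G(t)>|\lesssim\|F(s)\|_{\B_0}\|G(t)\|_{\B_0}$; integrating and applying H\"older freely (since $|I|=|J|=1$) yields $|B(F,G)|\lesssim\|F\|\stnI{\wt{q}'}{\B_0}\|G\|\stnJ{q'}{\B_0}$ for every $q,\wt{q}\in[1,\infty]$. \emph{(B, dispersive.)} Duality and the dispersive estimate (\ref{est:untrunc_decay}) give $|\ip<U(s)^*F(s),U(t)^*G(t)>|\lesssim|s-t|^{-\sigma}\|F(s)\|_{\B_1}\|G(t)\|_{\B_1}$; since $\dist(I,J)\approx 1$ the kernel is $O(1)$ on $I\times J$, so $|B(F,G)|\lesssim\|F\|\stnI{\wt{q}'}{\B_1}\|G\|\stnJ{q'}{\B_1}$ for every $q,\wt{q}\in[1,\infty]$. \emph{(C, Keel--Tao.)} Writing $B(F,G)=\ip<T^*F,T^*G>_{\Hi}$ and combining Cauchy--Schwarz with Theorem \ref{th:KT98} gives $|B(F,G)|\lesssim\|F\|\stnI{\wt{q}_0'}{\B_{\wt{\theta}_0}}\|G\|\stnJ{q_0'}{\B_{\theta_0}}$ for every pair of sharp $\sigma$-admissible exponents $(q_0,\theta_0)$ and $(\wt{q}_0,\wt{\theta}_0)$; the Keel--Tao exclusion $(q,\theta,\sigma)\neq(2,1,1)$ is the source of the caveat $\theta,\wt{\theta}<1$ when $\sigma=1$.

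I would then apply multilinear complex interpolation, slot by slot, between the three vertex families, relying on the standard identifications of interpolation spaces of $L^{p'}(\cdot;\B_{\alpha})$-type from \cite{BL76}. The goal is to show that the convex hull (in the four parameters $1/q,\theta,1/\wt{q},\wt{\theta}$) of the estimates furnished by (A), (B), (C) is exactly the region described by (\ref{eq:local1})--(\ref{eq:local3}). Heuristically: (C) alone furnishes the sharp-admissible surface $1/q=\sigma\theta/2$, $1/\wt{q}=\sigma\wt{\theta}/2$ with $\theta_0,\wt{\theta}_0\in[0,\min\{1,2/\sigma\}]$; convex combinations with the trivial vertex (A) permit $1/q$ and $1/\wt{q}$ to be decreased (via H\"older on bounded $I$ and $J$), producing the inequalities (\ref{eq:local3}); and convex combinations with the dispersive vertex (B) push $(\theta,\wt{\theta})$ toward the corner $(1,1)$, producing the asymmetric constraint (\ref{eq:local2}).

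The vertex estimates are routine; the substance of the proof lies in the convex-hull verification just described, i.e., in checking that any quadruple satisfying (\ref{eq:local1})--(\ref{eq:local3}) can be written as a convex combination of vertex data from (A), (B) and (C). For $\sigma\leq 1$ the constraint (\ref{eq:local2}) is vacuous and the verification is essentially dimension counting. The delicate case is $\sigma>1$: sharp $\sigma$-admissibility caps $\theta_0\leq 2/\sigma<1$ in (C), so (C) cannot by itself reach the corner $(1,1)$, and (\ref{eq:local2}) is precisely the boundary of the region reachable by interpolating along segments from sharp-admissible points in (C) to the dispersive vertex (B) at $(\theta,\wt{\theta})=(1,1)$. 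Carrying out this geometric book\-keeping explicitly, in the manner of \cite[Section 3]{dF05a}, is the main algebraic content of the proof.
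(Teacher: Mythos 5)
Your proposal follows the same route as the paper's proof: dualize to the bilinear estimate, establish vertex bounds (the paper records the dispersive estimate at the single point $(0,1;0,1)$ and the dual of Theorem \ref{th:KT98} on the sharp-admissible surface), take the convex hull by complex interpolation, and then exploit H\"older on the unit-length intervals $I,J$ to relax the time exponents, deferring the explicit convex-hull bookkeeping to the analogue of Foschi's Appendix A. Two small remarks: your vertex family (A) is already subsumed by (C) at $\theta_0=\wt{\theta}_0=0$ together with the H\"older step (the paper does not list it separately), and since $1/q=\sigma\theta_0/2\le 1/2$ caps sharp-admissible $\theta_0$ at $\min\{1,1/\sigma\}$ rather than $\min\{1,2/\sigma\}$, though neither point changes the argument.
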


The lemma and other results appearing in the ensuing sections are proved using a localised version of Proposition \ref{prop:retarded estimate equiv bilinear}. Given two intervals $I$ and $J$ of $\R$, write $I\times J$ as $Q$ and define $B_Q$ by the formula
\begin{equation}\label{eq:B_Q def}
B_Q(F,G)=B(1_IF,1_JG)=\iint_{(s,t)\in I\times J}\ip<U(s)^*F(s),U(t)^*G(t)>\,\dd s\,\dd t
\end{equation}
whenever $F$ and $G$ belong to $L^1(\R;\B_0)$.
One can easily show that the local inhomogeneous estimate (\ref{est:local inhomogeneous}) is equivalent to the bilinear estimate
\begin{multline}\label{est:B_Q}
|B_Q(F,G)|\lesssim\norm{F}\stnI{\wt{q}'}{\Bwth}\norm{G}\stnJ{q'}{\Bth}\\
\forall F\in\stI{\wt{q}'}{\Bwth}\cap\stI{1}{\B_0}\quad\forall G\in\stJ{q'}{\Bth}\cap\stJ{1}{\B_0}.
\end{multline}

\begin{proof}[Proof of Lemma \ref{lem:local inhomogeneous}]
Suppose that $I$ and $J$ are two intervals satisfying the hypothesis of the theorem and write $I\times J$ as $Q$.
Let $\Psi$ denote the set of points $(1/q,\theta;1/\wt{q},\wt{\theta})$ in $[0,1]^4$ corresponding to the pairs $(q,\theta)$ and $(\wt{q},\wt{\theta})$ for which estimate (\ref{est:local inhomogeneous}), or its bilinear equivalent (\ref{est:B_Q}), is valid.

The dispersive estimate (\ref{est:untrunc_decay}) implies that
\begin{align}
|B_Q(F,G)|
&\leq \iint_Q|\ip<F(s),U(s)U(t)^*G(t)>_{\B_1}|\,\dd s\,\dd t\notag\\
&\leq\iint_Q \norm{F(s)}_{\B_1}\norm{U(s)U(t)^*G(t)}_{B_1^*}\,\dd s\,\dd t\notag\\
&\lesssim \int_J\int_I|t-s|^{-\sigma}\norm{F(s)}_{\B_1}\norm{G(t)}_{\B_1}\,\dd s\,\dd t\notag\\
&\lesssim\norm{F}\stnI{1}{\B_1}\norm{G}\stnJ{1}{\B_1}.\label{est:local (0,1;0,1)}
\end{align}
Hence $(0,1;0,1)\in\Psi$.
On the other hand, the dual
\[\norm{\int_{\R}U(s)^*F(s)\,\dd s}_{\Hi}\lesssim\norm{F}\stn{q'}{\Bth}
\qquad\forall F\in\st{q'}{\Bth}\cap\st{1}{\B_0}\]
of the homogeneous Strichartz estimate (\ref{eq:str1}) of Theorem \ref{th:KT98} implies that
\begin{align}\label{est:local sharp sigma admiss}
|B_Q(F,G)|
&\leq\norm{\int_IU(s)^*F(s)\,\dd s}_{\Hi}\norm{\int_JU(t)^*G(t)\,\dd s}_{\Hi}\notag\\
&\lesssim\norm{F}\stnI{\wt{q}'}{\Bwth}\norm{G}\stnJ{q'}{\Bth}
\end{align}
whenever $(q,\theta)$ and $(\wt{q},\wt{\theta})$ are sharp $\sigma$-admissible.
Complex interpolation between (\ref{est:local (0,1;0,1)}) and (\ref{est:local sharp sigma admiss}) shows that $\Psi$ contains the convex hull of the set
\begin{equation}\label{set:convex hull}
(0,1;0,1)\cup\left\{(1/q,\theta;1/\wt{q},\wt{\theta}):
(q,\theta)\mbox{ and }(\wt{q},\wt{\theta})\mbox{ are $\sigma$-admissible pairs}\right\}.
\end{equation}

Since $G$ is restricted to a unit time interval, H\"older's inequality gives
\[
\norm{G}\stnJ{q'}{\Bth}
=\norm{1_JG}\stnJ{q'}{\Bth}
\leq\norm{1_J}_{L^{r'}(J)}\norm{G}\stnJ{p'}{\Bth}
\lesssim\norm{G}\stnJ{p'}{\Bth}
\]
whenever $1/q'=1/r'+1/p'$. We can always perform this calculation provided that $p\leq q$. Similarly, if $\wt{p}\leq \wt{q}$ then
\[\norm{F}\stnI{\wt{q}'}{\Bwth}\lesssim\norm{F}\stnI{\wt{p}'}{\Bwth}.\]
Hence if $(1/q,\theta;1/\wt{q},\wt{\theta})\in\Psi$ then $(1/p,\theta;1/\wt{p},\wt{\theta})\in\Psi$ whenever $p\leq q$ and $\wt{p}\leq\wt{q}$. If we apply this property to the points of the convex hull of (\ref{set:convex hull}) then we obtain a set $\Psi_*$, contained in $\Psi$, that is described precisely by the conditions appearing in Lemma \ref{lem:local inhomogeneous}. Details of this computation are analogous to those of \cite[Appendix A]{dF05a} and will be omitted.
\end{proof}

Recall (see Proposition \ref{prop:scaling}) that the energy estimate (\ref{est:energy}) and dispersive estimate (\ref{est:untrunc_decay}) are invariant with respect to the rescaling (\ref{scaling}). We shall apply this scaling to the local inhomogeneous estimate (\ref{est:local inhomogeneous}) to obtain a version of Lemma \ref{lem:local inhomogeneous} for intervals $I$ and $J$ that are not of unit length.

When scaling (\ref{scaling}) is applied, (\ref{est:local inhomogeneous}) becomes
\[\lambda^{-\sigma\theta/2}\left(\int_J\norm{\int_{\R}U(t/\lambda)U(s/\lambda)^*F(s)\,\dd s}_{\Bth^*}^q\,\dd t\right)^{1/q}\leq C\lambda^{\sigma\wt{\theta}/2}\norm{F}\stnI{\wt{q}'}{\Bwth}.\]
The substitution $s\mapsto\lambda s$ gives
\[\lambda^{-\sigma\theta/2+1}\left(\int_J\norm{(TT^*F_0)(t/\lambda)}_{\Bth^*}^q\,\dd t\right)^{1/q}\leq C\lambda^{\sigma\wt{\theta}/2+1/\wt{q}'}\norm{F_0}_{L^{\wt{q}'}(\lambda^{-1}I;\Bwth)}\]
where $F_0(s)=F(\lambda s)$. A further substitution $t\mapsto\lambda t$ yields
\[\lambda^{-\sigma\theta/2+1+1/q}\norm{TT^*F_0}_{L^q(\lambda^{-1}J;\Bth^*)}\leq C\lambda^{\sigma\wt{\theta}/2+1-1/\wt{q}}\norm{F_0}_{L^{\wt{q}'}(\lambda^{-1}I;\Bwth)}.\]
Hence
\[\norm{TT^*F_0}_{L^q(\lambda^{-1}J;\Bth^*)}\leq C\lambda^{-\beta(q,\theta;\wt{q},\wt{\theta})}\norm{F_0}_{L^{\wt{q}'}(\lambda^{-1}I;\Bwth)}\]
where
\begin{equation}\label{eq:4 parameter beta}
\beta(q,\theta;\wt{q},\wt{\theta})=\frac{1}{q}+\frac{1}{\wt{q}}-\frac{\sigma}{2}(\theta+\wt{\theta}).
\end{equation}
If we replace $\lambda$ with $\lambda^{-1}$ in the last inequality then we obtain the following Theorem.

\begin{theorem}\label{th:scaled local inhomogeneous}
Suppose that $\sigma>0$, $\lambda>0$ and $\{U(t):t\in\R\}$ satisfies the energy estimate (\ref{est:energy}) and the untruncated decay estimate (\ref{est:untrunc_decay}). Assume also that $I$ and $J$ are two time intervals of length $\lambda$ separated by a distance of scale $\lambda$ (that is, $|I|=|J|=\lambda$ and $\dist(I,J)\approx\lambda$). Then the local inhomogeneous Strichartz estimate
\begin{equation}\label{est:scaled local inhomogeneous}
\norm{TT^*F}\stnJ{q}{\Bth^*}\lesssim\lambda^{\beta(q,\theta;\wt{q},\wt{\theta})}\norm{F}\stnI{\wt{q}'}{\Bwth}\qquad\forall F\in\stI{\wt{q}'}{\Bwth}\cap\stI{1}{\B_0}
\end{equation}
holds whenever the pairs $(q,\theta)$ and $(\wt{q},\wt{\theta})$ satisfy the conditions appearing in Lemma \ref{lem:local inhomogeneous}.
\end{theorem}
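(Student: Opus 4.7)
The plan is to deduce this from the unit-length version, Lemma \ref{lem:local inhomogeneous}, by applying the scale invariance furnished by Proposition \ref{prop:scaling}. The hypotheses are preserved by the scaling, and the admissibility conditions appearing in Lemma \ref{lem:local inhomogeneous} depend only on the exponent pairs $(q,\theta)$ and $(\wt{q},\wt{\theta})$, not on the intervals. So the strategy is to pull back intervals of length $\lambda$ to intervals of unit length, invoke Lemma \ref{lem:local inhomogeneous} there, and then push forward while carefully bookkeeping the powers of $\lambda$ that arise.

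Concretely, I would fix the scaling parameter $\lambda>0$ and write $I_0=\lambda^{-1}I$, $J_0=\lambda^{-1}J$, which are unit-length intervals separated by a distance of scale $1$. Applying Proposition \ref{prop:scaling} with parameter $\lambda$ replaces $U(t)$ by $U(t/\lambda)$ and, via the preceding lemma, rescales the $\Bth$-norm by $\lambda^{\sigma\theta/2}$ (so the $\Bth^*$-norm rescales by $\lambda^{-\sigma\theta/2}$), while the $\Bwth$-norm rescales by $\lambda^{\sigma\wt{\theta}/2}$. Feeding these rescalings into the unit-length estimate (\ref{est:local inhomogeneous}) on the intervals $I_0,J_0$ yields
\[
\lambda^{-\sigma\theta/2}\left(\int_J\norm{\int_{\R}U(t/\lambda)U(s/\lambda)^*F(s)\,\dd s}_{\Bth^*}^q\,\dd t\right)^{1/q}\lesssim\lambda^{\sigma\wt{\theta}/2}\norm{F}\stnI{\wt{q}'}{\Bwth},
\]
which is exactly the starting point of the derivation appearing just before the theorem statement.

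From here the proof is a pair of changes of variable. Setting $F_0(s)=F(\lambda s)$, the substitution $s\mapsto\lambda s$ produces a factor $\lambda^{1/\wt{q}'}$ on the right and leaves $TT^*F_0$ evaluated at $t/\lambda$ on the left, with an additional $\lambda$ from the outer integral over $s$. A second substitution $t\mapsto\lambda t$ on the left produces $\lambda^{1/q}$ and converts the domain of integration from $J$ to $\lambda^{-1}J=J_0$, giving the estimate in the form it would hold for the rescaled problem over the unit-length intervals. Collecting all powers of $\lambda$ yields the exponent
\[
-\tfrac{\sigma\theta}{2}+1+\tfrac{1}{q}-\tfrac{\sigma\wt{\theta}}{2}-1+\tfrac{1}{\wt{q}}=\tfrac{1}{q}+\tfrac{1}{\wt{q}}-\tfrac{\sigma}{2}(\theta+\wt{\theta})=\beta(q,\theta;\wt{q},\wt{\theta}),
\]
so that the rescaled estimate takes the form $\norm{TT^*F_0}_{L^q(J_0;\Bth^*)}\lesssim\lambda^{-\beta}\norm{F_0}_{L^{\wt{q}'}(I_0;\Bwth)}$. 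Replacing $\lambda$ by $\lambda^{-1}$ (so that $I_0,J_0$ become intervals of length $\lambda$ separated by distance $\approx\lambda$) and renaming $F_0$ as $F$ gives precisely (\ref{est:scaled local inhomogeneous}).

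The proof is essentially bookkeeping, and I do not expect any genuine obstacle beyond tracking the $\lambda$-exponents correctly; the only slightly delicate point is making sure the interpolation-space rescaling lemma is applied consistently to both $\Bth$ on the right and $\Bth^*$ on the left, and that the Jacobians from the two substitutions combine with the interpolation-norm exponents to reproduce the claimed value of $\beta$.
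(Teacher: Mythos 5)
Your argument is correct and is essentially the same computation the paper carries out in the paragraph immediately preceding the theorem statement: apply the scaling of Proposition \ref{prop:scaling}, use the rescaling of the real-interpolation norm to convert the $\Bth$ and $\Bwth$ norms, then perform the two changes of variable $s\mapsto\lambda s$ and $t\mapsto\lambda t$ to collect the power $\lambda^{-\beta}$ on the right before replacing $\lambda$ by $\lambda^{-1}$. The bookkeeping of $\lambda$-exponents is accurate, and you correctly note that the exponent conditions of Lemma \ref{lem:local inhomogeneous} are scale-independent, so no new hypotheses arise.
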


%%%%%%%%%%%%%%%%%%%%%%%%%%%%%%%%%%%%%%%%%%%%%%%%%%%%%%%%%%%%%%%%%%%%%%%%%%%%%%%%%%%%%%%%%%%
\section{Dyadic decompositions}\label{s:dyadic}
%%%%%%%%%%%%%%%%%%%%%%%%%%%%%%%%%%%%%%%%%%%%%%%%%%%%%%%%%%%%%%%%%%%%%%%%%%%%%%%%%%%%%%%%%%%

Theorem \ref{th:scaled local inhomogeneous} gives spacetime estimates for $(TT^*)_R$ which are localised in time. In this section we make first steps toward obtaining global spacetime estimates for this operator.

We begin with a few preliminaries. We say that $\lambda$ is a dyadic number if $\lambda=2^k$ for some integer $k$. Denote by $2^{\Z}$ the set of all dyadic numbers. We say that a square in $\R^2$ is a \textit{dyadic square} if its side length $\lambda$ is a dyadic number and if the all the coordinates if its vertices are integer multiples of $\lambda$. It is well known (see, for example, \cite[Appendix J]{lG04}) that any open set $\Omega$ in $\R^2$ can be decomposed as the union of essentially disjoint dyadic squares whose lengths are approximately proportional to their distance from the boundary $\partial\Omega$ of $\Omega$. Such a decomposition is known as a \textit{Dyadic Whitney decomposition} of $\Omega$.

\begin{figure}

\centering

\begin{pspicture}(-.5,-.5)(7,7)
	\psset{unit=.1cm}
	%lower right shading
	\pspolygon[linecolor=white, fillcolor=lightgray, fillstyle=solid](0,0)(64,0)(64,64)
	%diagonal line
	\psline[linestyle=dotted](0,0)(64,64)
	%lines
	%horizontal lines big to small
	\psline[linewidth=.4pt](-1,64)(63,64)
	\psline[linewidth=.4pt](-1,32)(31,32)
	\psline[linewidth=.4pt](0,48)(47,48)
	\psline[linewidth=.4pt](0,48)(47,48)
	\psline[linewidth=.4pt](-1,16)(15,16)
	%vertical lines big to small
	\psline[linewidth=.4pt](0,65)(0,1)
	\psline[linewidth=.4pt](16,64)(16,17)
	\psline[linewidth=.4pt](32,65)(32,33)
	\psline[linewidth=.4pt](48,65)(48,49)
	%next level horizontal
	\psline[linewidth=.4pt](-1,8)(7,8)
	\psline[linewidth=.4pt](0,24)(23,24)
	\psline[linewidth=.4pt](16,40)(39,40)
	\psline[linewidth=.4pt](32,56)(55,56)
	%next level vertical
	\psline[linewidth=.4pt](8,9)(8,32)
	\psline[linewidth=.4pt](24,25)(24,48)
	\psline[linewidth=.4pt](40,41)(40,64)
	\psline[linewidth=.4pt](56,57)(56,65)
	%next level horizontal
	\psline[linewidth=.4pt](-1,4)(3,4)
	\psline[linewidth=.4pt](0,12)(11,12)
	\psline[linewidth=.4pt](8,20)(19,20)
	\psline[linewidth=.4pt](16,28)(27,28)
	\psline[linewidth=.4pt](24,36)(35,36)
	\psline[linewidth=.4pt](32,44)(43,44)
	\psline[linewidth=.4pt](40,52)(51,52)
	\psline[linewidth=.4pt](48,60)(59,60)
	%next level vertical
	\psline[linewidth=.4pt](4,5)(4,16)
	\psline[linewidth=.4pt](12,13)(12,24)
	\psline[linewidth=.4pt](20,21)(20,32)
	\psline[linewidth=.4pt](28,29)(28,40)
	\psline[linewidth=.4pt](36,37)(36,48)
	\psline[linewidth=.4pt](44,45)(44,56)
	\psline[linewidth=.4pt](52,53)(52,64)
	\psline[linewidth=.4pt](60,61)(60,65)
	%next level horizontal
	\psline[linewidth=.4pt](-1,2)(1,2)
	\psline[linewidth=.4pt](0,6)(5,6)
	\psline[linewidth=.4pt](4,10)(9,10)
	\psline[linewidth=.4pt](8,14)(13,14)
	\psline[linewidth=.4pt](12,18)(17,18)
	\psline[linewidth=.4pt](16,22)(21,22)
	\psline[linewidth=.4pt](20,26)(25,26)
	\psline[linewidth=.4pt](24,30)(29,30)
	\psline[linewidth=.4pt](28,34)(33,34)
	\psline[linewidth=.4pt](32,38)(37,38)
	\psline[linewidth=.4pt](36,42)(41,42)
	\psline[linewidth=.4pt](40,46)(45,46)
	\psline[linewidth=.4pt](44,50)(49,50)
	\psline[linewidth=.4pt](48,54)(53,54)
	\psline[linewidth=.4pt](52,58)(57,58)
	\psline[linewidth=.4pt](56,62)(61,62)
	%next level vertical
	\psline[linewidth=.4pt](2,3)(2,8)
	\psline[linewidth=.4pt](6,7)(6,12)
	\psline[linewidth=.4pt](10,11)(10,16)
	\psline[linewidth=.4pt](14,15)(14,20)
	\psline[linewidth=.4pt](18,19)(18,24)
	\psline[linewidth=.4pt](22,23)(22,28)
	\psline[linewidth=.4pt](26,27)(26,32)
	\psline[linewidth=.4pt](30,31)(30,36)
	\psline[linewidth=.4pt](34,35)(34,40)
	\psline[linewidth=.4pt](38,39)(38,44)
	\psline[linewidth=.4pt](42,43)(42,48)
	\psline[linewidth=.4pt](46,47)(46,52)
	\psline[linewidth=.4pt](50,51)(50,56)
	\psline[linewidth=.4pt](54,55)(54,60)
	\psline[linewidth=.4pt](58,59)(58,64)
	\psline[linewidth=.4pt](62,63)(62,65)
	%last level horizontal
	\psline[linewidth=.4pt](0,3)(2,3)
	\psline[linewidth=.4pt](2,5)(4,5)
	\psline[linewidth=.4pt](4,7)(6,7)
	\psline[linewidth=.4pt](6,9)(8,9)
	\psline[linewidth=.4pt](8,11)(10,11)
	\psline[linewidth=.4pt](10,13)(12,13)
	\psline[linewidth=.4pt](12,15)(14,15)
	\psline[linewidth=.4pt](14,17)(16,17)
	\psline[linewidth=.4pt](16,19)(18,19)
	\psline[linewidth=.4pt](18,21)(20,21)
	\psline[linewidth=.4pt](20,23)(22,23)
	\psline[linewidth=.4pt](22,25)(24,25)
	\psline[linewidth=.4pt](24,27)(26,27)
	\psline[linewidth=.4pt](26,29)(28,29)
	\psline[linewidth=.4pt](28,31)(30,31)
	\psline[linewidth=.4pt](30,33)(32,33)
	\psline[linewidth=.4pt](32,35)(34,35)
	\psline[linewidth=.4pt](34,37)(36,37)
	\psline[linewidth=.4pt](36,39)(38,39)
	\psline[linewidth=.4pt](38,41)(40,41)
	\psline[linewidth=.4pt](40,43)(42,43)
	\psline[linewidth=.4pt](42,45)(44,45)
	\psline[linewidth=.4pt](44,47)(46,47)
	\psline[linewidth=.4pt](46,49)(48,49)
	\psline[linewidth=.4pt](48,51)(50,51)
	\psline[linewidth=.4pt](50,53)(52,53)
	\psline[linewidth=.4pt](52,55)(54,55)
	\psline[linewidth=.4pt](54,57)(56,57)
	\psline[linewidth=.4pt](56,59)(58,59)
	\psline[linewidth=.4pt](58,61)(60,61)
	\psline[linewidth=.4pt](60,63)(62,63)
	% last level vertical
	\psline[linewidth=.4pt](1,2)(1,4)
	\psline[linewidth=.4pt](3,4)(3,6)
	\psline[linewidth=.4pt](5,6)(5,8)
	\psline[linewidth=.4pt](7,8)(7,10)
	\psline[linewidth=.4pt](9,10)(9,12)
	\psline[linewidth=.4pt](11,12)(11,14)
	\psline[linewidth=.4pt](13,14)(13,16)
	\psline[linewidth=.4pt](15,16)(15,18)
	\psline[linewidth=.4pt](17,18)(17,20)
	\psline[linewidth=.4pt](19,20)(19,22)
	\psline[linewidth=.4pt](21,22)(21,24)
	\psline[linewidth=.4pt](23,24)(23,26)
	\psline[linewidth=.4pt](25,26)(25,28)
	\psline[linewidth=.4pt](27,28)(27,30)
	\psline[linewidth=.4pt](29,30)(29,32)
	\psline[linewidth=.4pt](31,32)(31,34)
	\psline[linewidth=.4pt](33,34)(33,36)
	\psline[linewidth=.4pt](35,36)(35,38)
	\psline[linewidth=.4pt](37,38)(37,40)
	\psline[linewidth=.4pt](39,40)(39,42)
	\psline[linewidth=.4pt](41,42)(41,44)
	\psline[linewidth=.4pt](43,44)(43,46)
	\psline[linewidth=.4pt](45,46)(45,48)
	\psline[linewidth=.4pt](47,48)(47,50)
	\psline[linewidth=.4pt](49,50)(49,52)
	\psline[linewidth=.4pt](51,52)(51,54)
	\psline[linewidth=.4pt](53,54)(53,56)
	\psline[linewidth=.4pt](55,56)(55,58)
	\psline[linewidth=.4pt](57,58)(57,60)
	\psline[linewidth=.4pt](59,60)(59,62)
	\psline[linewidth=.4pt](61,62)(61,64)

	% Q label
	\uput[0](16.8,35.6){$Q$}
	% bold square
	\psline[linewidth=1pt](16,32)(16,40)
	\psline[linewidth=1pt](24,40)(16,40)
	\psline[linewidth=1pt](24,40)(24,32)
	\psline[linewidth=1pt](16,32)(24,32)
\end{pspicture}

\caption{Whitney's decomposition for the region $s<t$.}

\label{fig:whitney}
\end{figure}
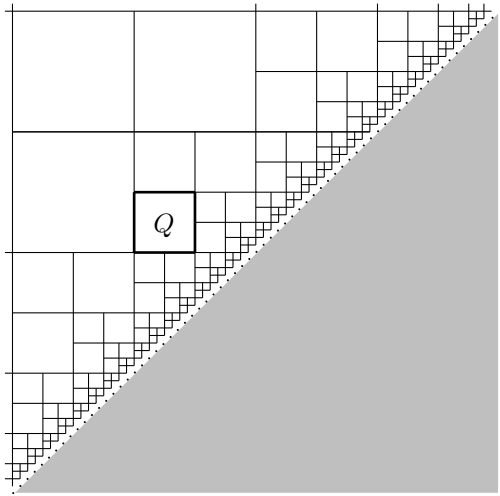

From now on, let $\Q$ denote the Dyadic Whitney decomposition illustrated in Figure \ref{fig:whitney} for the domain $\Omega$, where
\[\Omega=\{(s,t)\in\R^2:s<t\}.\]
For each dyadic number $\lambda$, let $\Q_{\lambda}$ denote the family contained in $\Q$ consisting of squares with side length $\lambda$. Each square $Q$ in $\Q_{\lambda}$ is the Cartesian product $I\times J$ of two intervals of $\R$ and has the property that
\[\lambda=|I|=|J|\approx\dist(Q,\partial\Omega)\approx\dist(I,J).\]
Since the squares $Q$ in the decomposition of $\Omega$ are pairwise essentially disjoint, we have the decomposition
\begin{equation}\label{eq:whitney decomposition of B}
B=\sum_{\lambda\in 2^{\Z}}\sum_{Q\in\Q_{\lambda}}B_Q,
\end{equation}
where $B$ is given by (\ref{eq:T(F,G)}) and $B_Q$ is given by (\ref{eq:B_Q def}) whenever $Q=I\times J$. The scaled version
\begin{multline}\label{est:scaled B_Q}
|B_Q(F,G)|\lesssim\lambda^{\beta(q,\theta;\wt{q},\wt{\theta})}\norm{F}\stnI{\wt{q}'}{\Bwth}\norm{G}\stnJ{q'}{\Bth}\\
\forall F\in\stI{\wt{q}'}{\Bwth}\cap\stI{1}{\B_0}\quad\forall G\in\stJ{q'}{\Bth}\cap\stJ{1}{\B_0}
\end{multline}
of (\ref{est:B_Q}) is equivalent to the scaled local inhomogeneous Strichartz estimate (\ref{est:scaled local inhomogeneous}).
The next proposition will enable us to replace the localised spaces $\stI{\wt{q}'}{\Bwth}$ and $\stJ{q'}{\Bth}$ with $\st{\wt{q}'}{\Bwth}$ and $\st{q'}{\Bth}$ at the cost of imposing another condition on $\wt{q}$ and $q$.

\begin{proposition}\label{prop:sum B_Q}
Suppose that $\sigma>0$, $1/q+1/\wt{q}\leq1$, $\lambda$ is a dyadic number and $\{U(t):t\in\R\}$ satisfies the energy estimate (\ref{est:energy}) and untruncated decay (\ref{est:untrunc_decay}). If the pairs $(q,\theta)$ and $(\wt{q},\wt{\theta})$ satisfy the conditions appearing in Lemma \ref{lem:local inhomogeneous} then
\begin{multline}\label{est:sum B_Q}
\sum_{Q\in\Q_{\lambda}}|B_Q(F,G)|\lesssim\lambda^{\beta(q,\theta;\wt{q},\wt{\theta})}
\norm{F}\stn{\wt{q}'}{\Bwth}\norm{G}\stn{q'}{\Bth}\\
\forall F\in\st{\wt{q}'}{\Bwth}\cap\st{1}{\B_0}\quad\forall G\in\st{q'}{\Bth}\cap\st{1}{\B_0}.
\end{multline}
\end{proposition}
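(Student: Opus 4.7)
\bigskip

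\noindent\textbf{Proof proposal for Proposition \ref{prop:sum B_Q}.}

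The plan is to apply the scaled local bilinear estimate square by square, and then use H\"older's inequality to sum over $Q\in\Q_\lambda$, exploiting the fact that at each dyadic scale $\lambda$ the first-factor intervals of the squares in $\Q_\lambda$ have bounded overlap, and similarly for the second-factor intervals.

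First, since $Q=I\times J$ with $|I|=|J|=\lambda$ and $\dist(I,J)\approx\lambda$, the scaled local estimate (\ref{est:scaled B_Q}) (which is equivalent to (\ref{est:scaled local inhomogeneous}) from Theorem \ref{th:scaled local inhomogeneous}) gives
\[
|B_Q(F,G)|\lesssim \lambda^{\beta(q,\theta;\wt q,\wt\theta)}\,a_Q\, b_Q,\qquad a_Q:=\norm{F}\stnI{\wt q'}{\Bwth},\quad b_Q:=\norm{G}\stnJ{q'}{\Bth},
\]
with an implicit constant uniform in $Q$ and $\lambda$. I now want to sum $\sum_Q a_Q b_Q$ against $\ell$-norms of the $a_Q$ and $b_Q$ via H\"older, and then pass from $\ell^{\wt q'}$ (resp.\ $\ell^{q'}$) summability of the $a_Q$ (resp.\ $b_Q$) back to the single norms $\norm{F}\stn{\wt q'}{\Bwth}$ and $\norm{G}\stn{q'}{\Bth}$.

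Next, observe that because the hypothesis $1/q+1/\wt q\le 1$ forces $\wt q'\ge q$, one can pick exponents $\alpha\in[\wt q',\infty]$ and $\gamma\in[q',\infty]$ satisfying $1/\alpha+1/\gamma=1$; indeed, take $\gamma=q'$ and $\alpha=q$ (noting $q\ge\wt q'$). Applying H\"older's inequality for sums yields
\[
\sum_{Q\in\Q_\lambda}a_Q b_Q \le \Bigl(\sum_{Q\in\Q_\lambda}a_Q^{\alpha}\Bigr)^{1/\alpha}\Bigl(\sum_{Q\in\Q_\lambda}b_Q^{\gamma}\Bigr)^{1/\gamma},
\]
and then the elementary $\ell^p$-embedding $\ell^{\wt q'}\hookrightarrow \ell^{\alpha}$ (valid since $\alpha\ge\wt q'$) bounds the first factor by $(\sum_Q a_Q^{\wt q'})^{1/\wt q'}$, while the second factor is already in the right form $(\sum_Q b_Q^{q'})^{1/q'}$.

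The final ingredient is the bounded-overlap property of $\Q_\lambda$: each dyadic interval $I$ of length $\lambda$ occurs as the first factor of only $O(1)$ squares $Q\in\Q_\lambda$ (those with a dyadic second factor $J$ of length $\lambda$ with $J>I$ and $\dist(I,J)\approx\lambda$), and likewise for the second factors. Hence
\[
\sum_{Q\in\Q_\lambda} a_Q^{\wt q'}=\sum_{Q\in\Q_\lambda}\int_I \norm{F(s)}_{\Bwth}^{\wt q'}\,\dd s\lesssim \int_{\R}\norm{F(s)}_{\Bwth}^{\wt q'}\,\dd s = \norm{F}\stn{\wt q'}{\Bwth}^{\wt q'},
\]
and similarly $\sum_Q b_Q^{q'}\lesssim \norm{G}\stn{q'}{\Bth}^{q'}$. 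Combining these with the per-square estimate proves (\ref{est:sum B_Q}). The main obstacle is really just the bookkeeping around the admissible range of H\"older exponents; the hypothesis $1/q+1/\wt q\le 1$ is precisely what is needed. The endpoint cases $q=1$ or $\wt q=1$ (so $q'=\infty$ or $\wt q'=\infty$) are handled by interpreting $\ell^\infty$ as $\sup$ and using $\sup_Q c_Q\le (\sum_Q c_Q^p)^{1/p}$ in place of the $\ell^p$-embedding step.
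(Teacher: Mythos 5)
Your proof is correct and is exactly the content of Foschi's Lemma 3.2 \cite[Lemma 3.2]{dF05a}, which the paper cites at this point instead of reproving: apply the per-square estimate, H\"older over $Q\in\Q_{\lambda}$ with exponents $(q,q')$, the embedding $\ell^{\wt{q}'}\hookrightarrow\ell^{q}$ (available because $1/q+1/\wt{q}\leq 1$ gives $q\geq\wt{q}'$), and bounded overlap of the $I$'s and of the $J$'s at a fixed scale. One small slip: you write that the hypothesis ``forces $\wt{q}'\geq q$,'' but it is the reverse inequality $q\geq\wt{q}'$ that holds and that your parenthetical and the embedding step actually use.
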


The proposition is an immediate consequence of Theorem \ref{th:scaled local inhomogeneous}, the equivalence of (\ref{est:scaled local inhomogeneous}) and (\ref{est:scaled B_Q}), and \cite[Lemma 3.2]{dF05a}.

%%%%%%%%%%%%%%%%%%%%%%%%%%%%%%%%%%%%%%%%%%%%%%%%%%%%%%%%%%%%%%%%%%%%%%%%%%%%%%%%%%%%%%%%%%%%%%%%%
\section{Proof of the global inhomogeneous Strichartz estimates}\label{s:global estimates}
%%%%%%%%%%%%%%%%%%%%%%%%%%%%%%%%%%%%%%%%%%%%%%%%%%%%%%%%%%%%%%%%%%%%%%%%%%%%%%%%%%%%%%%%%%%%%%%%%

To obtain the required global bilinear estimate (\ref{est:bilinear,retarded equiv}), one cannot simply sum (\ref{est:sum B_Q}) over all dyadic numbers $\lambda$, since the right-hand side is not summable in $\lambda$. To overcome this problem, we perturb the exponents slightly and interpolate to gain summability. It is at this stage that we depart from the approach of Foschi \cite[Sections 4 and 5]{dF05a}, whose chief technical tool is $p$-atomic decomposition of $L^p$ functions. In our abstract setting, the luxury of such decompositions for elements of the Banach space $\Bth$ is not present. Instead we prefer to use an abstract argument that appeals to real interpolation theory in much the same way as \cite[Section 6]{KT98}. The advantage of this approach is twofold. First, the proofs are shorter than Foschi's proofs. Second, it admits function spaces other than the Lebesgue spaces.

We require two facts about real interpolation. The first concerns real interpolation of weighted Lebesgue sequence spaces. Whenever $s\in\R$ and $1<q<\infty$, let $\ell_s^q$ denote the space of all scalar-valued sequences $\{a_j\}_{j\in\mathbb{Z}}$ such that
\[\norm{\{a_j\}_{j\in\Z}}_{\ell_s^q}=\Big(\sum_{j\in\mathbb{Z}}2^{js}|a_j|^q\Big)^{1/q}<\infty.\]
If $q=\infty$ then the norm is defined by
\[\norm{\{a_j\}_{j\in\Z}}_{\ell_s^{\infty}}=\sup_{j\in\mathbb{Z}}\,2^{js}|a_j|.\]
A special case of \cite[Theorem 5.6.1]{BL76} says that if $s_0$ and $s_1$ are two different real numbers and $0<\theta<1$ then
\begin{equation}\label{id:lsq_interpolation}
(\ell^{\infty}_{s_0},\ell_{s_1}^{\infty})_{\theta,1}=\ell_s^1,
\end{equation}
where $s=(1-\theta)s_0+\theta s_1$.

The second fact needed is given by the following lemma.

\begin{lemma}\label{lem:real bilinear interpolation}
\cite[pp.~76--77]{BL76}
Suppose that $(\A_0,\A_1)$, $(\B_0,\B_1)$ and $(\C_0,\C_1)$ are interpolation couples and that the bilinear operator $S$ acts as a bounded transformation as indicated below:
\begin{align*}
&S:\A_0\times \B_0\to \C_0 \\
&S:\A_0\times \B_1\to \C_1 \\
&S:\A_1\times \B_0\to \C_1.
\end{align*}
If $\theta_0,\theta_1\in(0,1)$ and $p,q,r\in[1,\infty]$ such that $1\leq1/p+1/q$ and $\theta_0+\theta_1<1$, then $S$ also acts as a bounded transformation in the following way:
\[S:(\A_0,\A_1)_{\theta_0,pr}\times (\B_0,\B_1)_{\theta_1,qr}\to(\C_0,\C_1)_{\theta_0+\theta_1,r}.\]
\end{lemma}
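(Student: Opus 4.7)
The plan is to combine the J-method representation of the inputs with the K-functional characterization of the output. I would first realize $a\in(\A_0,\A_1)_{\theta_0,pr}$ and $b\in(\B_0,\B_1)_{\theta_1,qr}$ as J-atom integrals $a=\int_0^\infty a(s)\,\frac{\dd s}{s}$ and $b=\int_0^\infty b(t)\,\frac{\dd t}{t}$ with $a(s)\in\A_0\cap\A_1$, $b(t)\in\B_0\cap\B_1$, where $J(s,a(s);\A_0,\A_1)=\max(\norm{a(s)}_{\A_0},s\norm{a(s)}_{\A_1})$, and where $s\mapsto s^{-\theta_0}J(s,a(s))$ belongs to $L^{pr}(\R_+;\dd s/s)$ with norm controlled by $\norm{a}_{(\A_0,\A_1)_{\theta_0,pr}}$ (and analogously for $b$). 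Since $S$ is bilinear, this yields the double-integral representation $S(a,b)=\iint S(a(s),b(t))\,\frac{\dd s}{s}\frac{\dd t}{t}$.

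Next, to show $S(a,b)\in(\C_0,\C_1)_{\theta_0+\theta_1,r}$, I would estimate the K-functional $K(u,S(a,b);\C_0,\C_1)$ in the weighted $L^r(\dd u/u)$ norm. The three hypotheses give $\norm{S(a(s),b(t))}_{\C_0}\lesssim\norm{a(s)}_{\A_0}\norm{b(t)}_{\B_0}$ and two competing bounds $\norm{S(a(s),b(t))}_{\C_1}\lesssim\norm{a(s)}_{\A_0}\norm{b(t)}_{\B_1}$ and $\lesssim\norm{a(s)}_{\A_1}\norm{b(t)}_{\B_0}$. Packaging these into an estimate for $K(u,S(a(s),b(t));\C_0,\C_1)$ yields a bound of the shape $\min(1,u/(st))\,J(s,a(s))J(t,b(t))$ (up to a multiplicative normalisation). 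After the logarithmic substitutions $s=e^{\sigma}$, $t=e^{\tau}$, $u=e^{\nu}$, the required weighted $L^r$ bound on $u\mapsto u^{-(\theta_0+\theta_1)}K(u,S(a,b);\C_0,\C_1)$ collapses to a convolution inequality on $\R^2\to\R$ with kernel of the form $\min(1,e^{\nu-\sigma-\tau})$ against exponentially weighted factors. The hypothesis $1/p+1/q\geq1$ is precisely Young's convolution condition delivering an $L^r$-output from $L^{pr}$ and $L^{qr}$ inputs paired against an $L^{r'}$-kernel, and $\theta_0+\theta_1<1$ is precisely what makes the convolution kernel integrable in the weighted space.

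The main obstacle is the missing corner: no boundedness of $S:\A_1\times\B_1\to\C_i$ is assumed at either endpoint. This is why the conclusion cannot reach the diagonal $\theta_0+\theta_1=1$, and why the strict inequality $\theta_0+\theta_1<1$ is indispensable: in the convolution formulation it is exactly the slack that makes the kernel $\min(1,e^{\nu-\sigma-\tau})$, after multiplication by the weight $e^{-(\theta_0+\theta_1)\nu}e^{\theta_0\sigma}e^{\theta_1\tau}$, decay enough at both ends for Young's inequality to apply. Modulo this delicate exponent bookkeeping, the argument is the standard Lions--Peetre bilinear interpolation machinery of \cite[pp.~76--77]{BL76}, and I would just verify that the present hypotheses $(\theta_0+\theta_1<1$; $1/p+1/q\geq1)$ are exactly the ones needed to close the Young's inequality step.
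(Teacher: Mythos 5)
Your overall plan---$J$-atom representation of the inputs, estimate the $K$-functional of the output, reduce to a weighted convolution-type bound---is the right skeleton for this Lions--Peetre exercise, but the central kernel estimate is incorrect, and the error is traceable to the very corner you flag as missing. From the three hypotheses together with $\norm{a(s)}_{\A_0}\leq J(s,a(s))$ and $s\norm{a(s)}_{\A_1}\leq J(s,a(s))$ (likewise for $b$), the two $\C_1$-estimates yield only $\norm{S(a(s),b(t))}_{\C_1}\lesssim\min(s^{-1},t^{-1})J(s,a(s))J(t,b(t))$, hence
\[
K\bigl(u,S(a(s),b(t));\C_0,\C_1\bigr)\lesssim\min\Bigl(1,\frac{u}{\max(s,t)}\Bigr)J(s,a(s))J(t,b(t)).
\]
A kernel $\min(1,u/(st))$ would require $\norm{S(a(s),b(t))}_{\C_1}\lesssim(st)^{-1}J(s,a(s))J(t,b(t))$, which is precisely what the absent $\A_1\times\B_1\to\C_1$ hypothesis would furnish and cannot be recovered from what is given. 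This is not a ``multiplicative normalisation'': $\max(s,t)$ and $st$ have different homogeneity. After the logarithmic substitution the true kernel depends on $\nu-\max(\sigma,\tau)$, not on $\nu-\sigma-\tau$; it is not a two-dimensional convolution kernel, so Young's inequality in the form you invoke does not apply. (Even with your proposed kernel, the iterated Young computation on $\R$ only closes for a single value of $r$ determined by $1/p+1/q$, not for the full stated range.)

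The correct way to close is to split $\{(s,t)\}$ into $\{s\geq t\}$ and $\{s<t\}$, or discretely, to group the $J$-atoms $S(a_m,b_n)$ by $k=\max(m,n)$. Setting $\alpha_m=2^{-m\theta_0}J(2^m,a_m)$, $\beta_n=2^{-n\theta_1}J(2^n,b_n)$ and $c_k=\sum_{\max(m,n)=k}S(a_m,b_n)$, one finds
\[
2^{-k(\theta_0+\theta_1)}J(2^k,c_k)\lesssim\alpha_k\,(G_{\theta_1}*\beta)_k+\beta_k\,(G_{\theta_0}*\alpha)_k,
\qquad G_{\theta}(j)=2^{-j\theta}1_{j\geq0},
\]
a pointwise product of one sequence with a one-sided convolution of the other, rather than a convolution of the two. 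Each term is controlled by H\"older ($\ell^{pr}\times\ell^{p'r}\to\ell^r$), Young for $G_\theta$ (using $\theta_0,\theta_1>0$ so that $G_\theta\in\ell^1$), and the embeddings $\ell^{qr}\subseteq\ell^{p'r}$, $\ell^{pr}\subseteq\ell^{q'r}$, which are exactly where $1/p+1/q\geq1$ enters. Note also that $\theta_0+\theta_1<1$ does not play the role of kernel integrability as you suggest; it is needed so that $(\C_0,\C_1)_{\theta_0+\theta_1,r}$ is a genuine real interpolation space with equivalent $J$- and $K$-descriptions. The paper itself offers no proof, only the citation to \cite[pp.~76--77]{BL76}; the above is the intended argument, and your sketch needs these corrections before it establishes the lemma.
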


We are now ready to prove the global inhomogeneous Strichartz estimates of Theorem \ref{th:inhomogeneous}.

\begin{lemma}\label{lem:global inhomogeneous perturb q} Suppose that $\sigma>0$ and that $\{U(t):t\in\R\}$ satisfies the energy estimate (\ref{est:energy}) and the dispersive estimate (\ref{est:untrunc_decay}). Then the inhomogeneous Strichartz estimate (\ref{eq:str3}) holds whenever the exponent pairs $(q,\theta)$ and $(\wt{q},\wt{\theta})$ satisfy the conditions
\begin{align*}
q,\wt{q}\in(1,\infty)&,\qquad\theta,\wt{\theta}\in[0,1],\\
(\sigma-1)(1-\theta)\leq\sigma(1-\wt{\theta})&,\qquad(\sigma-1)(1-\wt{\theta})\leq\sigma(1-\theta),\\
\frac{1}{q}>\frac{\sigma}{2}(\theta-\wt{\theta})&,\qquad\frac{1}{\wt{q}}>\frac{\sigma}{2}(\wt{\theta}-\theta),\\
\frac{1}{q}&+\frac{1}{\wt{q}}<1
\end{align*}
and
\begin{equation}\label{eq:inhomogeneous scaling 2}
\frac{1}{q}+\frac{1}{\wt{q}}=\frac{\sigma}{2}(\theta+\wt{\theta}).
\end{equation}
If $\sigma=1$ then we also require that $\theta<1$ and $\wt{\theta}<1$.
\end{lemma}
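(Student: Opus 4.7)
The plan is, via Proposition \ref{prop:retarded estimate equiv bilinear}, to prove the bilinear estimate $|B(F,G)|\lesssim\norm{F}\stn{\wt q'}{\Bwth}\norm{G}\stn{q'}{\Bth}$ by decomposing $B$ using the Whitney decomposition (\ref{eq:whitney decomposition of B}) and gaining summability over dyadic scales by real interpolation of $\ell^\infty$-type bounds at perturbed exponents, in the spirit of \cite[Section~6]{KT98}. Concretely, package the dyadic sum as a sequence-valued bilinear form
\[S(F,G)=\Big\{\sum_{Q\in\Q_{2^k}}B_Q(F,G)\Big\}_{k\in\Z},\]
so that $|B(F,G)|\le\norm{S(F,G)}_{\ell^1}$. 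Estimate (\ref{est:sum B_Q}) says precisely that, whenever a pair $(q_*,\theta_*;\wt q_*,\wt\theta_*)$ meets the hypotheses of Lemma \ref{lem:local inhomogeneous} with $1/q_*+1/\wt q_*\le1$, the form $S$ maps $\st{\wt q_*'}{\B_{\wt\theta_*}}\times\st{q_*'}{\B_{\theta_*}}$ boundedly into the weighted space $\ell^\infty_{-\beta_*}$, where $\beta_*$ is the scaling exponent (\ref{eq:4 parameter beta}). At the exponents of the lemma the scaling condition (\ref{eq:inhomogeneous scaling 2}) forces $\beta_*=0$, so only a useless $\ell^\infty$-bound emerges directly; the idea is to replace $\ell^\infty$ by $\ell^1_0=\ell^1$ by interpolating three such weighted $\ell^\infty$-estimates at perturbed exponents, using the identity (\ref{id:lsq_interpolation}) together with the bilinear real interpolation Lemma \ref{lem:real bilinear interpolation}.

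Fix small numbers $0<\epsilon_-<\epsilon_+$ and set $(\theta_0,\wt\theta_0)=(\theta-\epsilon_-,\wt\theta-\epsilon_-)$ and $(\theta_1,\wt\theta_1)=(\theta+\epsilon_+,\wt\theta+\epsilon_+)$. The strict inequalities $\frac{1}{q}+\frac{1}{\wt q}<1$, $\frac{1}{q}>\frac{\sigma}{2}(\theta-\wt\theta)$ and $\frac{1}{\wt q}>\frac{\sigma}{2}(\wt\theta-\theta)$ from the hypothesis guarantee that, for sufficiently small $\epsilon_\pm$, each of the three pairs $(\wt\theta_0,\theta_0)$, $(\wt\theta_0,\theta_1)$, $(\wt\theta_1,\theta_0)$ still meets the hypotheses of Lemma \ref{lem:local inhomogeneous}, and a short computation via (\ref{eq:inhomogeneous scaling 2}) yields the three scaling exponents
\[\beta_{00}=\sigma\epsilon_->0,\qquad\beta_{01}=\beta_{10}=\tfrac{\sigma}{2}(\epsilon_--\epsilon_+)<0.\]
I then apply Lemma \ref{lem:real bilinear interpolation} to the corresponding three bilinear estimates, taking $\A_j=\st{\wt q'}{\B_{\wt\theta_j}}$, $\B_j=\st{q'}{\B_{\theta_j}}$, $\C_0=\ell^\infty_{-\sigma\epsilon_-}$, $\C_1=\ell^\infty_{\sigma(\epsilon_+-\epsilon_-)/2}$, the parameters $p=q=2$ and $r=1$ in that lemma, and interpolation indices $\theta_0^{KT}=\theta_1^{KT}=\epsilon_-/(\epsilon_-+\epsilon_+)$. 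The sum $\theta_0^{KT}+\theta_1^{KT}=2\epsilon_-/(\epsilon_-+\epsilon_+)$ is $<1$ precisely because $\epsilon_-<\epsilon_+$; this common value is also the $\eta$ at which $(1-\eta)(-\sigma\epsilon_-)+\eta\cdot\sigma(\epsilon_+-\epsilon_-)/2$ vanishes, so by (\ref{id:lsq_interpolation}) the target $(\C_0,\C_1)_{\eta,1}$ equals $\ell^1_0=\ell^1$. Reiteration of real interpolation, together with the commutation of real interpolation with vector-valued $L^p$-spaces recalled at the end of Section \ref{s:preliminaries}, identifies $(\A_0,\A_1)_{\theta_0^{KT},2}$ with $\st{\wt q'}{\Bwth}$ and $(\B_0,\B_1)_{\theta_1^{KT},2}$ with $\st{q'}{\Bth}$; Lemma \ref{lem:real bilinear interpolation} then yields $S:\st{\wt q'}{\Bwth}\times\st{q'}{\Bth}\to\ell^1$, from which (\ref{eq:str3}) follows.

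The main technical obstacle is the verification that all three perturbed pairs satisfy the non-strict conditions (\ref{eq:local2}) of Lemma \ref{lem:local inhomogeneous}. This is automatic when those inequalities are strict at $(\theta,\wt\theta)$, but it is delicate when $\sigma>1$ and one of (\ref{eq:local2}) is tight: the symmetric perturbation described above then violates the constraint at one of the cross pairs $(\wt\theta_0,\theta_1)$ or $(\wt\theta_1,\theta_0)$, and one must instead choose an asymmetric shift of $\theta$ and $\wt\theta$ along the admissible wedge, keeping all three pairs in the region of Lemma \ref{lem:local inhomogeneous} while still arranging $\beta_{00}$ and $\beta_{01}=\beta_{10}$ of opposite signs so that the real interpolation to $\ell^1$ goes through. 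A careful bookkeeping of the boundary cases $\theta=0$, $\wt\theta=0$ or the ranges involving $\sigma=1$ (where the hypothesis explicitly excludes $\theta=\wt\theta=1$) completes the argument.
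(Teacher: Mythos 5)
There is a genuine gap, and it is exactly the reason the paper's proof perturbs the \emph{time} exponents $q,\wt q$ rather than the \emph{spatial} exponents $\theta,\wt\theta$ as you do.

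Your argument breaks at the step where you ``identify $(\A_0,\A_1)_{\theta_0^{KT},2}$ with $\st{\wt q'}{\Bwth}$''. With your choices $\A_j=\st{\wt q'}{\B_{\wt\theta_j}}$ and Lemma~\ref{lem:real bilinear interpolation} applied with $p=q=2$, $r=1$, the interpolated space is
$\big(L^{\wt q'}(\B_{\wt\theta_0}),L^{\wt q'}(\B_{\wt\theta_1})\big)_{\theta_0^{KT},2}$,
and this is \emph{not} $L^{\wt q'}(\Bwth)$ when the inner Banach spaces vary. Real interpolation of the $\B$--scale does not commute with the outer $L^{\wt q'}$ unless the second interpolation index equals the $L^p$--exponent; this is precisely what the paper flags when citing Cwikel's negative result at the end of Section~\ref{s:preliminaries}. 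What you \emph{can} say is: by Minkowski,
$L^{\wt q'}\big((\B_{\wt\theta_0},\B_{\wt\theta_1})_{\theta_0^{KT},2}\big)\hookrightarrow\big(L^{\wt q'}(\B_{\wt\theta_0}),L^{\wt q'}(\B_{\wt\theta_1})\big)_{\theta_0^{KT},2}$
holds only when $2\ge\wt q'$, i.e.\ $\wt q\ge2$; and if you instead run Lions--Peetre with the matching index $pr=\wt q'$, reiteration gives $\B_{\wt\theta,\wt q'}$ rather than $\Bwth=\B_{\wt\theta,2}$, which contains $\Bwth$ only when $\wt q\le2$. Neither branch covers the full range $q,\wt q\in(1,\infty)$ asserted by the lemma, and the constraint $1/p+1/q\ge1$ in Lemma~\ref{lem:real bilinear interpolation} rules out patching the two regimes together (except at $q=2$ or $\wt q=2$). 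In effect, perturbing $\theta$'s naturally produces the estimate (\ref{eq:str4}) involving $\B_{\theta,q'}$ and $\B_{\wt\theta,\wt q'}$, which is the content of Lemma~\ref{lem:global inhomogeneous perturb theta}, not the estimate (\ref{eq:str3}) with $\Bth$ and $\Bwth$ that Lemma~\ref{lem:global inhomogeneous perturb q} asserts.

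The paper avoids all of this by keeping $\theta,\wt\theta$ fixed and perturbing $q,\wt q$ instead: it sets $1/q_0=1/q-\epsilon$, $1/\wt q_0=1/\wt q-\epsilon$, $1/q_1=1/q+2\epsilon$, $1/\wt q_1=1/\wt q+2\epsilon$, obtains the three $\ell^\infty$--weighted bounds from Proposition~\ref{prop:sum B_Q}, and then interpolates with $\eta_0=\eta_1=\tfrac13$, $p=\wt q'$, $q=q'$, $r=1$. Because the Banach space $\Bwth$ (resp.\ $\Bth$) is the \emph{same} on both sides, the vector-valued real interpolation reduces to scalar interpolation of the time exponents, and $\big(L^{\wt q_0'}(\Bwth),L^{\wt q_1'}(\Bwth)\big)_{\eta_0,\wt q'}=L^{\wt q'}(\Bwth)$ exactly. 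This also sidesteps your acknowledged difficulty with the non-strict inequalities (\ref{eq:local2}) at the boundary: those conditions involve only $\theta,\wt\theta$ and are therefore untouched when only $q,\wt q$ are perturbed, whereas in your scheme they may genuinely fail at a cross pair $(\theta_1,\wt\theta_0)$ when one of them is tight, and the proposed asymmetric shift is never carried out and would further disturb the $\beta$--bookkeeping. Your overall framework (Whitney decomposition, sequence-valued bilinear form, $\ell^\infty_s$--to--$\ell^1_0$ interpolation via Lemma~\ref{lem:real bilinear interpolation} and identity (\ref{id:lsq_interpolation})) is right; the fix is to move the perturbation to the $q$'s.
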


\begin{proof}
Suppose that the exponent pairs $(q,\theta)$ and $(\wt{q},\wt{\theta})$ satisfy the conditions appearing in the statement of the theorem. Then there is a positive $\epsilon$ such that the pairs $(q_0,\theta)$ and $(\wt{q}_0,\wt{\theta})$ and the pairs $(q_1,\theta)$ and $(\wt{q}_1,\wt{\theta})$, defined by
\[
\frac{1}{q_0}=\frac{1}{q}-\epsilon,\qquad\frac{1}{\wt{q}_0}=\frac{1}{\wt{q}}-\epsilon,\qquad
\frac{1}{q_1}=\frac{1}{q}+2\epsilon,\qquad\frac{1}{\wt{q}_1}=\frac{1}{\wt{q}}+2\epsilon,
\]
also satisfy all the conditions appearing in the statement of the theorem except for (\ref{eq:inhomogeneous scaling 2}).

Define a function $\wt{B}$ on $\st{1}{\B_0}\times\st{1}{\B_0}$ by
\[\wt{B}(F,G)=\left\{\sum_{Q\in\Q_{2^{-j}}} B_Q(F,G)\right\}_{j\in\Z}.\]
Proposition \ref{prop:sum B_Q} implies that the maps
\begin{align*}
\wt{B}:\st{\wt{q}'_0}{\B_{\wt{\theta}}}\times\st{q'_0}{\B_{\theta}}
&\to\ell^{\infty}_{\beta(q_0,\theta;\wt{q}_0,\wt{\theta})}\\
\wt{B}:\st{\wt{q}'_0}{\B_{\wt{\theta}}}\times\st{q'_1}{\B_{\theta}}
&\to\ell^{\infty}_{\beta(q_1,\theta;\wt{q}_0,\wt{\theta})}\\
\wt{B}:\st{\wt{q}'_1}{\B_{\wt{\theta}}}\times\st{q'_0}{\B_{\theta}}
&\to\ell^{\infty}_{\beta(q_0,\theta;\wt{q}_1,\wt{\theta})}
\end{align*}
are bounded. Note that $\beta(q_1,\theta;\wt{q}_0,\wt{\theta})=\beta(q_0,\theta;\wt{q}_1,\wt{\theta})$. So we may apply Lemma \ref{lem:real bilinear interpolation} to obtain the bounded map
\begin{multline}\label{wtB:complicated q interpolation}
\wt{B}:\big(\st{\wt{q}'_0}{\B_{\wt{\theta}}},\st{\wt{q}'_1}{\B_{\wt{\theta}}}\big)_{\eta_0,\wt{q}'}
\times\big(\st{q'_0}{\B_{\theta}},\st{q'_1}{\B_{\theta}}\big)_{\eta_1,q'}\\
\to\big(\ell^{\infty}_{\beta(q_0,\theta;\wt{q}_0,\wt{\theta})},
\ell^{\infty}_{\beta(q_1,\theta;\wt{q}_0,\wt{\theta})}\big)_{\eta,1}
\end{multline}
where $\eta_0=\eta_1=\tfrac{1}{3}$ and $\eta=\eta_0+\eta_1$. It is easy to check that \[(1-\eta)\beta(q_0,\theta;\wt{q}_0,\wt{\theta})+\eta\beta(q_1,\theta;\wt{q}_0,\wt{\theta})
=\beta(q,\theta;\wt{q},\wt{\theta})=0.\]
If we combine this with (\ref{id:lsq_interpolation}) then (\ref{wtB:complicated q interpolation}) simplifies to
\[\wt{B}:\st{\wt{q}'}{\Bwth}\times\st{q'}{\Bth}\to \ell_0^1,\]
from which we obtain the bilinear estimate (\ref{est:bilinear,retarded equiv}).
\end{proof}

In the above proof we first perturbed the time exponents $q$ and $\wt{q}$ in estimate (\ref{est:sum B_Q}) and then interpolated. Successful perturbation required strict inequalities in the conditions appearing in Lemma (\ref{lem:local inhomogeneous}) that involved $q$ and $\wt{q}$. The proof (which we omit) of the next lemma uses the same idea, except that the spatial exponents $\theta$ and $\wt{\theta}$ are perturbed instead. This allows us to recover some boundary cases that the previous lemma excludes. 

\begin{lemma}\label{lem:global inhomogeneous perturb theta} Suppose that $\sigma>0$ and that $\{U(t):t\in\R\}$ satisfies the energy estimate (\ref{est:energy}) and the untruncated decay estimate (\ref{est:untrunc_decay}). Then the inhomogeneous Strichartz estimate (\ref{eq:str4}) holds whenever the exponent pairs $(q,\theta)$ and $(\wt{q},\wt{\theta})$ satisfy the conditions
\begin{align}
q,\wt{q}\in(1,\infty]&,\qquad\theta,\wt{\theta}\in(0,1),\notag\\
(\sigma-1)(1-\theta)<\sigma(1-\wt{\theta})&,\qquad(\sigma-1)(1-\wt{\theta})<\sigma(1-\theta),\notag\\
\frac{1}{q}>\frac{\sigma}{2}(\theta-\wt{\theta})&,\qquad\frac{1}{\wt{q}}>\frac{\sigma}{2}(\wt{\theta}-\theta),
\label{eq:perturb theta 2}\\
\frac{1}{q}&+\frac{1}{\wt{q}}\leq1 \notag
\end{align}
and
\begin{equation*}
\frac{1}{q}+\frac{1}{\wt{q}}=\frac{\sigma}{2}(\theta+\wt{\theta}).
\end{equation*}
\end{lemma}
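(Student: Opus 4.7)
The plan is to follow the blueprint of the proof of Lemma \ref{lem:global inhomogeneous perturb q}, but to perturb the spatial exponents $\theta,\wt{\theta}$ rather than the time exponents $q,\wt{q}$. The strict inequalities in the hypothesis---$\theta,\wt{\theta}\in(0,1)$ together with $(\sigma-1)(1-\theta)<\sigma(1-\wt{\theta})$ and $(\sigma-1)(1-\wt{\theta})<\sigma(1-\theta)$---give exactly the room needed for such a perturbation. The point of perturbing $\theta,\wt{\theta}$ rather than $q,\wt{q}$ is that the resulting real interpolation at the end of the argument produces the target spaces $\B_{\theta,q'}$ and $\B_{\wt{\theta},\wt{q}'}$ appearing in (\ref{eq:str4}), whose second parameters differ from the fixed value $2$ appearing in $\B_\theta=(\B_0,\B_1)_{\theta,2}$.

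First I would choose $\epsilon>0$ so small that the three pairs
\[((q,\theta_0),(\wt{q},\wt{\theta}_0)),\quad((q,\theta_1),(\wt{q},\wt{\theta}_0)),\quad((q,\theta_0),(\wt{q},\wt{\theta}_1)),\]
with $\theta_0=\theta-\epsilon$, $\theta_1=\theta+2\epsilon$, $\wt{\theta}_0=\wt{\theta}-\epsilon$ and $\wt{\theta}_1=\wt{\theta}+2\epsilon$, all satisfy the hypotheses of Proposition \ref{prop:sum B_Q}. Using the scaling identity $1/q+1/\wt{q}=\sigma(\theta+\wt{\theta})/2$ together with (\ref{eq:4 parameter beta}), a direct computation gives
\[\beta(q,\theta_0;\wt{q},\wt{\theta}_0)=\sigma\epsilon,\qquad\beta(q,\theta_1;\wt{q},\wt{\theta}_0)=\beta(q,\theta_0;\wt{q},\wt{\theta}_1)=-\sigma\epsilon/2;\]
the coincidence in the second equality is what makes the subsequent bilinear interpolation step possible.

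With $\wt{B}(F,G)=\{\sum_{Q\in\Q_{2^{-j}}}B_Q(F,G)\}_{j\in\Z}$ defined as in the proof of Lemma \ref{lem:global inhomogeneous perturb q}, Proposition \ref{prop:sum B_Q} then produces the three bounded bilinear maps
\begin{align*}
\wt{B}&:\st{\wt{q}'}{\B_{\wt{\theta}_0}}\times\st{q'}{\B_{\theta_0}}\to\ell^{\infty}_{\sigma\epsilon},\\
\wt{B}&:\st{\wt{q}'}{\B_{\wt{\theta}_0}}\times\st{q'}{\B_{\theta_1}}\to\ell^{\infty}_{-\sigma\epsilon/2},\\
\wt{B}&:\st{\wt{q}'}{\B_{\wt{\theta}_1}}\times\st{q'}{\B_{\theta_0}}\to\ell^{\infty}_{-\sigma\epsilon/2}.
\end{align*}
Applying Lemma \ref{lem:real bilinear interpolation} with $\eta_0=\eta_1=1/3$, $r=1$, $p=\wt{q}'$ and $q=q'$---the requirement $1\leq 1/p+1/q$ reduces to $1/q+1/\wt{q}\leq1$, which is in the hypothesis---gives a bounded map from
\[(\st{\wt{q}'}{\B_{\wt{\theta}_0}},\st{\wt{q}'}{\B_{\wt{\theta}_1}})_{1/3,\wt{q}'}\times(\st{q'}{\B_{\theta_0}},\st{q'}{\B_{\theta_1}})_{1/3,q'}\]
into $(\ell^{\infty}_{\sigma\epsilon},\ell^{\infty}_{-\sigma\epsilon/2})_{2/3,1}$.

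What remains is to identify these interpolation spaces. Since in each factor the outer Lebesgue exponent matches the interpolation second parameter, the standard results on vector-valued $L^p$-spaces cited in Section \ref{s:preliminaries} commute the real interpolation past the outer $L^p$-norm; the reiteration theorem for the real method then collapses the inner interpolation to $(\B_0,\B_1)_{\wt{\theta},\wt{q}'}=\B_{\wt{\theta},\wt{q}'}$, using that $(2/3)\wt{\theta}_0+(1/3)\wt{\theta}_1=\wt{\theta}$, and analogously produces $\B_{\theta,q'}$. Finally, identity (\ref{id:lsq_interpolation}) together with $(2/3)(-\sigma\epsilon/2)+(1/3)\sigma\epsilon=0$ identifies the target as $\ell^1_0$, and summing the resulting $j$-indexed sequence yields the bilinear estimate equivalent by duality to (\ref{eq:str4}). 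I expect the main obstacle to be this last round of space identification---in particular, verifying the exchange of vector-valued $L^p$ with real interpolation and tracking the reiteration theorem carefully to confirm that $\wt{q}'$ and $q'$ survive as the second parameters in the final Banach spaces.
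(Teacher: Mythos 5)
Your proof is correct and reconstructs exactly the argument the paper declines to spell out: the paper itself says the proof of Lemma \ref{lem:global inhomogeneous perturb theta} ``uses the same idea'' as Lemma \ref{lem:global inhomogeneous perturb q} except that the spatial exponents are perturbed, and that is precisely what you do. Your $\beta$-computations, the choice $r=1$, $p=\wt{q}'$, $q=q'$ in Lemma \ref{lem:real bilinear interpolation} (whose hypothesis $1\leq 1/p+1/q$ is exactly $1/q+1/\wt{q}\leq1$), and the two-stage identification of the interpolated domains via the Lions--Peetre identity $\big(L^p(A_0),L^p(A_1)\big)_{\eta,p}=L^p\big((A_0,A_1)_{\eta,p}\big)$ followed by reiteration all check out and correctly produce the spaces $\B_{\theta,q'}$ and $\B_{\wt{\theta},\wt{q}'}$ appearing in (\ref{eq:str4}).
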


The two previous lemmata combine to give Theorem \ref{th:inhomogeneous}. For example, suppose that $(q,\theta)$ and $(\wt{q},\wt{\theta})$ satisfy the conditions appearing in Theorem \ref{th:inhomogeneous} case (ii). If $\theta>0$ and $\wt{\theta}>0$ then  $\sigma$-acceptability is equivalent to (\ref{eq:perturb theta 2}) by the scaling condition (\ref{eq:inhomogeneous scaling condition}). In this case, Lemma \ref{lem:global inhomogeneous perturb theta} shows that the retarded Strichartz estimate (\ref{eq:str4}) holds. On the other hand, if either $\theta=0$ or $\wt{\theta}=0$ then $\sigma$-acceptability, (\ref{eq:inhomogeneous scaling condition}) and (\ref{eq:sharp theta}) imply that both $(q,\theta)$ and $(\wt{q},\wt{\theta})$ are sharp $\sigma$-admissible. Hence the Strichartz estimate (\ref{eq:str3}) holds by Theorem \ref{th:KT98}. But since $q\geq2$ and $\wt{q}\geq2$, \cite[Theorem 3.4.1]{BL76} gives the continuous embeddings $\B_{\theta,q'}\subseteq\Bth$ and $\B_{\wt{\theta},\wt{q}'}\subseteq\Bwth$ and thus (\ref{eq:str3}) implies (\ref{eq:str4}).

%%%%%%%%%%%%%%%%%%%%%%%%%%%%%%%%%%%%%%%%%%%%%%%%%%%%%%%%%%%%%%%%%%%%%%%%%%%%%%%%%%%%%%%%%%%%%%%%%
\section{The sharpness of the main theorem}\label{s:sharpness}
%%%%%%%%%%%%%%%%%%%%%%%%%%%%%%%%%%%%%%%%%%%%%%%%%%%%%%%%%%%%%%%%%%%%%%%%%%%%%%%%%%%%%%%%%%%%%%%%%

In this section we discuss the sharpness of the exponent conditions appearing in Theorem \ref{th:inhomogeneous}.

\begin{proposition}
Suppose that $\sigma>0$ and that the inhomogeneous Strichartz estimate (\ref{eq:str3}) holds for any $\{U(t):t\geq0\}$ satisfying the energy estimate (\ref{est:energy}) and the dispersive estimate (\ref{est:untrunc_decay}). Then $(q,\theta)$ and $(\wt{q},\wt{\theta})$ must be $\sigma$-admissible pairs that satisfy the following conditions:
\begin{align}
\frac{1}{q}+\frac{1}{\wt{q}}&=\frac{\sigma}{2}(\theta+\wt{\theta}),\label{eq:nec1}\\
\frac{1}{q}+\frac{1}{\wt{q}}&\leq1,\label{eq:nec2}\\
|\theta-\wt{\theta}|&\leq\frac{1}{\sigma}\label{eq:nec3}
\end{align}
and
\begin{equation}\label{eq:nec4}
(\sigma-1)(1-\theta)-\frac{2}{q}\leq\sigma(1-\wt{\theta}),\qquad(\sigma-1)(1-\wt{\theta})-\frac{2}{q}\leq\sigma(1-\theta).
\end{equation}
Moreover, if $\sigma=1$ then the inhomogeneous estimate is false when $\theta=\wt{\theta}=1$.
\end{proposition}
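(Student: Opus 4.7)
The plan is to split the necessary conditions into two groups. The scaling relation (\ref{eq:nec1}) follows from the built-in scaling invariance of the hypotheses (Proposition \ref{prop:scaling}), while the remaining conditions (\ref{eq:nec2})--(\ref{eq:nec4}), $\sigma$-acceptability, and the failure at $\sigma=1$, $\theta=\wt{\theta}=1$, will require explicit counterexamples. Since the hypothesis asks (\ref{eq:str3}) to hold for \emph{every} family $\{U(t)\}$ satisfying (\ref{est:energy}) and (\ref{est:untrunc_decay}), it is enough, for each condition, to exhibit one family for which the condition is forced. The natural choice is the free Schr\"odinger propagator $U(t)=e^{it\Delta}$ on $\R^n$ with $\Hi=L^2(\R^n)$ and $(\B_0,\B_1)=(L^2(\R^n),L^1(\R^n))$, which satisfies (\ref{est:energy}) and (\ref{est:untrunc_decay}) with $\sigma=n/2$ and has $\Bth$ equal to the Lorentz space $L^{r(\theta),2}$ with $1/r(\theta)=(1+\theta)/2$; for general $\sigma>0$, a suitable fractional analogue (the propagator for $(-\Delta)^{\alpha/2}$, for instance) serves the same purpose.

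To derive (\ref{eq:nec1}), I would apply the rescaling (\ref{scaling}) to (\ref{eq:str3}) and repeat the computation preceding Theorem \ref{th:scaled local inhomogeneous}. The rescaled estimate has the same form as (\ref{eq:str3}) but with implicit constant multiplied by $\lambda^{\beta(q,\theta;\wt{q},\wt{\theta})}$, where $\beta$ is the quantity defined in (\ref{eq:4 parameter beta}). Because the scaled family $\{U(t/\lambda)\}$ satisfies (\ref{est:energy}) and (\ref{est:untrunc_decay}) with the same implicit constants, the rescaled estimate must hold with a $\lambda$-independent bound for every $\lambda>0$, and letting $\lambda\to 0$ and $\lambda\to\infty$ forces $\beta(q,\theta;\wt{q},\wt{\theta})=0$, which is exactly (\ref{eq:nec1}).

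For the remaining conditions I would pass to the Schr\"odinger model and import the classical counterexamples from \cite{KT98}, \cite{dF05a} and \cite{mV07}. Knapp-type data, whose Fourier transforms concentrate on a cap of radius $\delta^{-1}$ and which therefore evolve into solutions concentrated on a parabolic spacetime tube of dimensions $\delta\times\delta^2$, yield, upon comparing the two sides of (\ref{eq:str3}) as $\delta$ varies, both the $\sigma$-acceptability bound and the temporal/spatial balance condition $|\theta-\wt{\theta}|\leq 1/\sigma$. The condition (\ref{eq:nec2}) comes from a translation/dilation obstruction to boundedness of the retarded operator: comparing $(TT^*)_RF$ with $(TT^*)_R$ of a time-shifted copy of $F$, and exploiting the one-sided support in time, leads to the requirement $q\geq\wt{q}'$. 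The asymmetric hyperbolic conditions (\ref{eq:nec4}) arise from a joint concentration of the input $F$ and the dual test function $G$ in the bilinear form (\ref{est:bilinear,retarded equiv}), tuned so that the non-trivial overlap lies in the retarded region $s<t$. Finally, when $\sigma=1$ and $\theta=\wt{\theta}=1$, the scaling relation forces $1/q+1/\wt{q}=1$; taking $F$ to be a single $\B_1$-profile localized at unit time scale then produces a logarithmic divergence on the left-hand side of (\ref{eq:str3}) through the $|t-s|^{-1}$ decay, which the right-hand side cannot absorb.

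The hard part will be the asymmetric conditions (\ref{eq:nec4}): unlike the Knapp-type examples, which rely on purely spatial frequency concentration, these require a joint spacetime packet construction adapted to the causal geometry $s<t$ of the retarded operator, with the sharp choice of cutoff sizes dictated by the two one-sided inequalities. Since we only need the estimate to fail for the one Schr\"odinger model, however, the sharpness examples can be imported essentially verbatim from the work of Foschi \cite[Section 6]{dF05a}.
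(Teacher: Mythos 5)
Your roadmap for conditions (\ref{eq:nec1})--(\ref{eq:nec4}) matches the paper's proof essentially step by step: (\ref{eq:nec1}) from the scaling (\ref{scaling}), (\ref{eq:nec2}) from time-translation invariance of $(TT^*)_R$ for the Schr\"odinger group combined with a H\"ormander-type theorem forcing $\wt q'\leq q$, and (\ref{eq:nec3})--(\ref{eq:nec4}) by importing the explicit Schr\"odinger forcing-term constructions of Foschi and Vilela. So far this is the paper's argument, with the appeal to the Schr\"odinger model playing the same role.

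The gap is in the final claim, the failure at $(\sigma,\theta,\wt\theta)=(1,1,1)$. You propose to show this by a direct ``single $\B_1$-profile'' argument producing a logarithmic divergence from the $|t-s|^{-1}$ kernel. This cannot work as stated. First, when $\theta=1$ the relevant space is $\Bth=(\B_0,\B_1)_{1,2}$, which is \emph{not} $\B_1$, and the bilinear form $B_Q$ must be bounded (or shown unbounded) in the interpolated norms, not the $\B_1\to\B_1^*$ pair; the dispersive estimate alone does not control (or lower-bound) the $\Bth^*$ norm. Second, and more fundamentally, the very logarithmic divergence you invoke is precisely what the Keel--Tao atomic/real-interpolation machinery is designed to absorb: the double-endpoint estimate holds for $\sigma>1$ despite the local non-integrability of $|t-s|^{-\sigma\theta}$. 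A naive single-profile counterexample would therefore ``disprove'' that endpoint as well, so the reasoning proves too much. The genuine failure at $(\sigma,\theta,\wt\theta)=(1,1,1)$ is a delicate fact; the paper establishes it by invoking Tao's negative result for the two-dimensional Schr\"odinger double endpoint \cite{tT2000}, whose proof is far from a one-profile computation. You should replace your heuristic with an explicit appeal to that result (or an equally careful construction) rather than attempting to derive it directly from the $|t-s|^{-1}$ decay.
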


The method of proof is quite standard. Before presenting it, we note that the difference in the necessary and sufficient conditions for the validity of the inhomogeneous Strichartz estimate (\ref{eq:str3}) essentially lies in two places. First there is the gap between (\ref{eq:nec4}) and (\ref{eq:nonsharp theta}). Second, there is the gap between the range of values for $\theta$ and $\wt{\theta}$ as shown in Figure  \ref{fig:sharpness of theta against theta}. In particular, the region $AOEDB$ in corresponds to sufficient conditions for $\theta$ and $\wt{\theta}$ while the region $AOED'B'$ corresponds to necessary conditions. The boundaries of each region are included except the line segment $BD$ for the sufficient conditions. This discrepancy along $BD$ is muted somewhat by the validity of the inhomogeneous estimate (\ref{eq:str4}) when $\frac{\sigma}{2}(\theta+\wt{\theta})=1$.

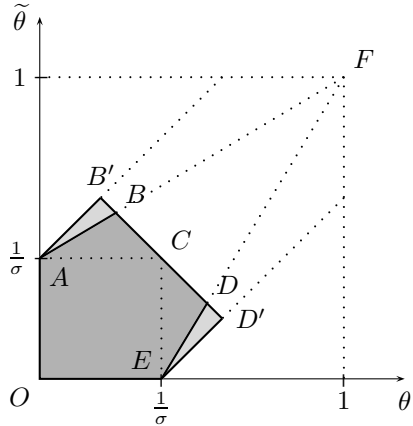
\begin{figure}
\centering
\subfigure
{
\begin{pspicture}(-.4,-.4)(4.4,4.4)
	\psset{unit=4cm}
	%colours
	\newgray{gray1}{.85}
	\newgray{gray3}{.7}
	%axes
	\psline[linewidth=.4pt]{->}(0,0)(0,1.2)
	\psline[linewidth=.4pt]{->}(0,0)(1.2,0)
	%axis labels	
	\uput[l](0,1){$1$}
	\uput[d](1,0){$1$}
	\uput[d](1.2,0){$\theta$}
	\uput[l](0,1.2){$\wt{\theta}$}
	\uput[d](.4,0){$\frac{1}{\sigma}$}
	\uput[l](0,.4){$\frac{1}{\sigma}$}
	%polygons	
	\pspolygon[linewidth=.8pt, linecolor=white, fillcolor=gray3, fillstyle=solid](0,0)(0,.4)(.25,.55)(.55,.25)(.4,0)
	\pspolygon[linewidth=.8pt, linecolor=white, fillcolor=gray1, fillstyle=solid](.4,0)(.55,.25)(.6,.2)
	\pspolygon[linewidth=.8pt, linecolor=white, fillcolor=gray1, fillstyle=solid](0,.4)(.25,.55)(.2,.6)
	%polygon boundaries
	\psline[linewidth=.8pt](.4,0)(.55,.25)
	\psline[linewidth=.8pt](0,.4)(.25,.55)
	\psline[linewidth=.8pt](.55,.25)(.25,.55)
	\psline[linewidth=.8pt](0,.4)(.2,.6)
	\psline[linewidth=.8pt](.2,.6)(.25,.55)
	\psline[linewidth=.8pt](.4,0)(.6,.2)
	\psline[linewidth=.8pt](.6,.2)(.55,.25)
	\psline[linewidth=.8pt](.4,0)(0,0)(0,.4)
	%construction lines
	\psline[linewidth=.8pt, linestyle=dotted](1,0)(1,1)(0,1)
	\psline[linewidth=.8pt, linestyle=dotted](.6,.2)(1,.6)
	\psline[linewidth=.8pt, linestyle=dotted](.2,.6)(.6,1)
	\psline[linewidth=.8pt, linestyle=dotted](0,.4)(.4,.4)(.4,0)
	\psline[linewidth=.8pt, linestyle=dotted](.25,.55)(1,1)
	\psline[linewidth=.8pt, linestyle=dotted](.55,.25)(1,1)
	%axes markers
	\psdots[dotstyle=|](1,0)(.4,0)
	\psdots[dotstyle=|, dotangle=90](0,1)(0,.4)
	%vertex labels
	\uput[225](0,0){$O$}
	\uput[315](0,.4){$A$}
	\uput[45](.25,.55){$B$}
	\uput[90](.2,.6){$B'$}
	\uput[45](.4,.4){$C$}
	\uput[45](.55,.25){$D$}
	\uput[0](.6,.2){$D'$}
	\uput[135](.4,0){$E$}
	\uput[45](1,1){$F$}
\end{pspicture}
}
\caption{Necessary and sufficient conditions on the exponents $\theta$ and $\wt{\theta}$ for global inhomogeneous Strichartz estimates.}
\label{fig:sharpness of theta against theta}
\end{figure}

\begin{proof} Suppose that $\sigma>0$ and that the global inhomogeneous Strichartz estimate (\ref{eq:str3}) holds for any $\{U(t):t\geq0\}$ satisfying the energy estimate (\ref{est:energy}) and the dispersive estimate (\ref{est:untrunc_decay}). We systemically establish the necessity of each of the conditions above.

Recall that (\ref{est:energy}) and (\ref{est:untrunc_decay}) are invariant with respect to scaling (\ref{scaling}). When the same scaling is applied to (\ref{eq:str3}), we obtain
\begin{multline*}
\lambda^{\sigma\theta/2+1+1/q}\norm{(TT^*)_RF}\stn{q}{\Bth^*}\lesssim\lambda^{\sigma\wt{\theta}/2+1/\wt{q}'}\norm{F}\stn{\wt{q}'}{\Bwth}\\
\forall F\in\st{\wt{q}'}{\Bwth}\cap\st{1}{\B_0}.
\end{multline*}
Invariance with respect to scaling requires that
\[\frac{\sigma\theta}{2}+1+\frac{1}{q}=\frac{\sigma\wt{\theta}}{2}+\frac{1}{\wt{q}'},\]
which is equivalent to (\ref{eq:nec1}).

To show the necessity of condition (\ref{eq:nec2}), consider any family $\{U(t):t\in\R\}$ which possesses the group property $U(t)U(s)^*=U(t-s)$ whenever $s$ and $t$ are real numbers (the Schr\"odinger group given by $U(t)=e^{it\Delta}$ will suffice). Under such circumstances, the operator $(TT^*)_R:\st{\wt{q'}}{\Bwth}\to\st{q'}{\Bth^*}$ is translation invariant (by which we mean that it commutes with all time translation operators), implying that $\wt{q}'\leq q$ by a vector-valued version of \cite[Theorem 1.1]{lH60}. This last inequality is equivalent to (\ref{eq:nec2}).

The necessity of (\ref{eq:nec3}) and (\ref{eq:nec4}) is the result of two particular forcing terms $F$ constructed for the Schr\"odinger group (see \cite[Examples 6.9 and 6.10]{dF05a}, or alternately \cite[Section 3]{mV07} for details).

Finally, the exclusion of the case $(\theta,\wt{\theta},\sigma)=(1,1,1)$ follows from the negative result of T. Tao \cite{tT2000} for the Schr\"odinger equation in two spatial dimensions.
\end{proof}

%%%%%%%%%%%%%%%%%%%%%%%%%%%%%%%%%%%%%%%%%%%%%%%%%%%%%%%%%%%%%%%%%%%%%%%%%%%%%%%%%%%%%%%%%%%%%%%%%
\section{Application to the Schr\"odinger equation with potential}\label{s:schrodinger}
%%%%%%%%%%%%%%%%%%%%%%%%%%%%%%%%%%%%%%%%%%%%%%%%%%%%%%%%%%%%%%%%%%%%%%%%%%%%%%%%%%%%%%%%%%%%%%%%%

In this section we show how Theorem \ref{th:inhomogeneous} is used to obtain Strichartz estimates for various Schr\"odinger equations. First we consider the standard Schr\"odinger equation (that is, without potential) and show that Theorem \ref{th:inhomogeneous} recovers the Strichartz estimates obtained by Foschi \cite{dF05a} and Vilela \cite{mV07}. After this we obtain Strichartz estimates for Schr\"odinger equations with potential (see Corollary \ref{cor:strichartz for schrodinger with potential}); these cannot be deduced from the results of \cite{dF05a} and \cite{mV07} (see Remark \ref{rem:foschi insufficient} for further details).

First we need a result about real interpolation of $L^p$ spaces. Suppose that $p_0,p_1\in[1,\infty]$, $p_0\neq p_1$, $\min(p_0,p_1)<q\leq\infty$ and $0<\theta<1$. If $1/p=(1-\theta)/p_0+\theta p_1$ then
\[\big(L^{p_0}(\R^n),L^{p_1}(\R^n)\big)_{\theta,q}=L^{p,q}(\R^n),\]
where $L^{p,q}(\R^n)$ denotes the Lorentz space (with exponents $p$ and $q$) on $\R^n$ (see \cite[Theorem 5.2.1]{BL76}). Moreover, we have the continuous embedding
\[L^p(\R^n)\subseteq\big(L^{p_0}(\R^n),L^{p_1}(\R^n)\big)_{\theta,q}=L^{p,q}(\R^n)\]
whenever $p\leq q$ (see \cite[p. 2]{BL76}).

Suppose that $n$ is a positive integer. We say that a pair $(q,r)$ of Lebesgue exponents are \textit{Schr\"odinger n-acceptable} if either
\[1\leq q<\infty,\qquad 2\leq r\leq\infty,\qquad\frac{1}{q}<n\left(\frac{1}{2}-\frac{1}{r}\right)\]
or $(q,r)=(\infty,2)$.

\begin{corollary}[Foschi \cite{dF05a}, Vilela \cite{mV07}]\label{cor:inhomogeneous strichartz for schrodinger}
Suppose that $n$ is a positive integer and that the exponent pairs $(q,r)$ and $(\wt{q},\wt{r})$ are Schr\"odinger $n$-acceptable, satisfy the scaling condition
\[\frac{1}{q}+\frac{1}{\wt{q}}=\frac{n}{2}\left(1-\frac{1}{r}-\frac{1}{\wt{r}}\right)\]
and either the conditions
\[\frac{1}{q}+\frac{1}{\wt{q}}<1,\qquad\frac{n-2}{r}\leq\frac{n}{\wt{r}},\qquad\frac{n-2}{\wt{r}}\leq\frac{n}{r}\]
or the conditions
\[\frac{1}{q}+\frac{1}{\wt{q}}=1,\qquad\frac{n-2}{r}<\frac{n}{\wt{r}},\qquad\frac{n-2}{\wt{r}}<\frac{n}{r},
\qquad\frac{1}{r}\leq\frac{1}{q},\qquad\frac{1}{\wt{r}}\leq\frac{1}{\wt{q}}.\]
When $n=2$ we also require that $r<\infty$ and $\wt{r}<\infty$.
If $F\in\st{\wt{q}'}{L^{\wt{r}'}(\R^n)}$ and $u$ is a weak solution of the inhomogeneous Schr\"odinger equation
\[iu'(t)+\Delta u(t)=F(t),\qquad u(0)=0\]
then
\begin{equation}\label{est:inhomogeneous strichartz for schrodinger}
\norm{u}\stn{q}{L^r(\R^n)}\lesssim\norm{F}\stn{\wt{q}'}{L^{\wt{r}'}(\R^n)}.
\end{equation}
\end{corollary}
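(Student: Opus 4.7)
The plan is to specialise Theorem \ref{th:inhomogeneous} to the Schr\"odinger setting with $\Hi = L^2(\R^n)$, $(\B_0, \B_1) = (L^2(\R^n), L^1(\R^n))$, $\sigma = n/2$, and $U(t) = e^{it\Delta}$. The energy hypothesis (\ref{est:energy}) follows from the unitarity of the Schr\"odinger group on $L^2$, while the dispersive hypothesis (\ref{est:untrunc_decay}) is the classical fixed-time decay
\begin{equation*}
\norm{e^{i(t-s)\Delta} g}_{L^\infty(\R^n)} \lesssim |t-s|^{-n/2} \norm{g}_{L^1(\R^n)}.
\end{equation*}

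First I would translate the abstract interpolation spaces into concrete ones using the Lorentz identification recalled just before the corollary: $(L^2, L^1)_{\theta, q} = L^{p, q}(\R^n)$ with $1/p = (1+\theta)/2$. Setting $\theta = 1 - 2/r$ yields $p = r'$, so that $\Bth = L^{r', 2}(\R^n)$, $\Bth^* = L^{r, 2}(\R^n)$, $\B_{\theta, q'} = L^{r', q'}(\R^n)$ and $(\B_{\theta, q'})^* = L^{r, q}(\R^n)$, with mirror formulas for $\wt\theta$ and $\wt r$. Under this dictionary, $\sigma$-acceptability of $(q, \theta)$ matches Schr\"odinger $n$-acceptability of $(q, r)$, the scaling (\ref{eq:inhomogeneous scaling condition}) becomes the scaling displayed in the corollary, the conditions (\ref{eq:nonsharp theta}) (respectively (\ref{eq:sharp theta})) become $(n-2)/r \leq n/\wt r$ and its transpose (respectively the strict versions), and the $\sigma = 1$ exclusion corresponds to the requirement $r, \wt r < \infty$ when $n = 2$.

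Next I would invoke Duhamel's formula to identify the weak solution $u$ with $-i(TT^*)_R F$ (extending $F$ by zero to $t < 0$ if necessary, which does not enlarge the relevant norms, and approximating so as to meet the density condition $F \in \st{1}{\B_0}$). The corollary's first regime now falls into case (i) of Theorem \ref{th:inhomogeneous} and produces
\begin{equation*}
\norm{u}\stn{q}{L^{r, 2}(\R^n)} \lesssim \norm{F}\stn{\wt q'}{L^{\wt r', 2}(\R^n)},
\end{equation*}
while the second regime falls into case (ii) and produces
\begin{equation*}
\norm{u}\stn{q}{L^{r, q}(\R^n)} \lesssim \norm{F}\stn{\wt q'}{L^{\wt r', \wt q'}(\R^n)}.
\end{equation*}

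The final step is to trade Lorentz norms for Lebesgue norms via the monotonicity $L^{p, q_1} \subseteq L^{p, q_2}$ for $q_1 \leq q_2$ (with $L^p = L^{p, p}$). In the first regime, Schr\"odinger $n$-acceptability forces $r, \wt r \geq 2$, so $L^{r, 2} \subseteq L^r$ and $L^{\wt r'} \subseteq L^{\wt r', 2}$; in the second regime, the extra hypotheses $1/r \leq 1/q$ and $1/\wt r \leq 1/\wt q$ translate into $r \geq q$ and $\wt r' \leq \wt q'$, so $L^{r, q} \subseteq L^r$ and $L^{\wt r'} \subseteq L^{\wt r', \wt q'}$. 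Both regimes therefore deliver (\ref{est:inhomogeneous strichartz for schrodinger}). I expect no serious obstacle: the only thing to watch is the bookkeeping of dualities and Lorentz embeddings, since all of the real-analytic content is already packaged inside Theorem \ref{th:inhomogeneous}.
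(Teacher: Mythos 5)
Your proposal is correct and matches the paper's own proof essentially line for line: both specialise Theorem \ref{th:inhomogeneous} to $\Hi=L^2(\R^n)$, $(\B_0,\B_1)=(L^2,L^1)$, $\sigma=n/2$, $U(t)=e^{it\Delta}$, identify the real interpolation spaces with Lorentz spaces, and finish with the embeddings $L^{r,2}\subseteq L^r$ (or $L^{r,q}\subseteq L^r$ when $1/r\leq 1/q$) and $L^{\wt r'}\subseteq L^{\wt r',2}$ (or $L^{\wt r'}\subseteq L^{\wt r',\wt q'}$). You are merely more explicit than the paper in spelling out the $\theta\leftrightarrow r$ dictionary and in treating case (i) alongside case (ii).
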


\begin{proof}
This is a simple application of Theorem \ref{th:inhomogeneous} when $\Hi=L^2(\R^n)$, $(\B_0,\B_1)=(L^2(\R^n),L^1(\R^n))$, $\sigma=n/2$ and $U(t)=e^{it\Delta}$. That the energy estimate is satisfied follows from Plancherel's theorem, while the dispersive estimate follows from a simple bound on the integral representation of $e^{it\Delta}$ (see, for example, \cite[Section 6]{dF05a} for details). To obtain (\ref{est:inhomogeneous strichartz for schrodinger}) from (\ref{eq:str4}), we use the embedding $L^{r'}(\R^n)\subseteq L^{r',q'}(\R^n)$ whenever $r'\leq q'$.
\end{proof}

We now show that our generalisation of Foschi's work \cite{dF05a} allows one to obtain new Strichartz estimates for Schr\"odinger equations involving certain potentials.

Suppose that $V:\R^3\to\R$ is a real-valued potential on $\R^3$ with decay
\begin{equation}\label{est:potential}
|V(x)|\leq C\nn<x>^{-\beta}\qquad\forall x\in\R^3,
\end{equation}
where $\beta>5/2$ and $\nn<x>=(1+|x|^2)^{1/2}$. Consider the Hamiltonian operator $H$, given by $H=-\Delta+V$, on the Hilbert space $L^2(\R^3)$ with domain $W^{2,2}(\R^3)$, where $W^{k,p}(X)$ denotes the Sobolev space of order $k$ in $L^p(X)$. Our goal is to obtain spacetime estimates for the solution $u$ of the inhomogeneous initial value problem
\begin{equation}\label{eq:NLS IVP}
\begin{cases}
\left(i\frac{\partial}{\partial t}+H\right)u(t)=F(t)\qquad\forall t\in[0,\tau],\\
u(0)=f,
\end{cases}
\end{equation}
where $\tau>0$ and, for each time $t$ in $\R$, $f$ and $F(t)$ are complex-valued functions on $\R^3$.

Hamiltonians that satisfy the above conditions are considered by K. Yajima in \cite{kY05}. There it mentions that $H$ is self-adjoint on $L^2(\R^3)$ with a spectrum consisting of a finite number of nonpositive eigenvalues, each of finite multiplicity, and the absolutely continuous part $[0,\infty)$. Denote by $P_c$ the orthogonal projection from $L^2(\R^3)$ onto the continuous spectral subspace for $H$. Under the general assumption (\ref{est:potential}), it is known that $P_c$, when viewed as an operator on $L^p(\R^3)$, is bounded only when $2/3<p<3$.

Denote by $\mathcal{H}_{\gamma}$ the weighted Lebesgue space $L^2(\R^3,\nn<x>^{2\gamma}\dd x)$. When $\gamma\in(1/2,\beta-1/2)$, define the null space $\mathcal{N}$ by
\[\mathcal{N}=\left\{\phi\in\mathcal{H}_{-\gamma}:\phi(x)+\frac{1}{4\pi}\int_{\R^3}
\frac{V(y)\phi(y)}{|x-y|}\,\dd y=0\right\}.\]
As noted in \cite{kY05}, the space $\mathcal{N}$ is finite dimensional and is independent of the choice of $\gamma$ in the interval $(1/2,\beta-1/2)$. All $\phi$ belonging to $\mathcal{N}$ satisfy the stationary Schr\"odinger equation
\begin{equation}\label{eq:free schrodinger}
-\Delta\phi(x)+V(x)\phi(x)=0,
\end{equation}
where (\ref{eq:free schrodinger}) is to be interpreted in the distributional sense. Conversely, any function $\phi\in\mathcal{H}_{-3/2}$ which satisfies (\ref{eq:free schrodinger}) belongs to $\mathcal N$. Hence, if $0$ is an eigenvalue of $H$ with associated eigenspace $\mathcal E$, then $\mathcal E$ is a subspace of $\mathcal N$.

\begin{definition}
We say that $H$ or $V$ is of \textit{generic type} if $\mathcal{N}=\{0\}$ and is of \textit{exceptional type} otherwise. The Hamiltonian $H$ is of \textit{exceptional type of the first kind} if $\mathcal{N}\neq\{0\}$ and $0$ is not an eigenvalue of $H$. It is of \textit{exceptional type of the second kind} if $\mathcal{E}=\mathcal{N}\neq\{0\}$. Finally, we say that $H$ is of \textit{exceptional type of the third kind} if $\{0\}\subset\mathcal{E}\subset\mathcal{N}$ with strict inclusions.
\end{definition}

While most $V$ are of generic type, examples that are of exceptional type are interesting from a physical point of view. In particular, if $V$ is of exceptional of the third kind then any function $\phi$ in $\mathcal{N}\bs\mathcal{E}$ is called a \textit{resonance} of $H$.

We would like to apply Theorems \ref{th:KT98} and \ref{th:inhomogeneous} to the case where $U(t)$ is the operator $e^{itH}$, defined by the functional calculus for self-adjoint operators. However, if $g$ is an eigenfunction of $H$ with corresponding eigenvalue $\lambda$, then
\begin{equation}\label{eq:static eigenfunction}
U(s)U(t)^*g=e^{i(s-t)H}g=e^{i(s-t)\lambda}g
\end{equation}
and therefore $U(s)U(t)^*g$ is stationary. Consequently, the dispersive hypothesis (\ref{est:untrunc_decay}) is not satisfied. Fortunately, this is not the case if $g$ lies in the continuous spectral subspace of $H$.

\begin{theorem}[K. Yajima \cite{kY05}]\label{th:dispersive schrodinger}
There exists a positive constant $C_p$ such that the dispersive estimate
\begin{equation}\label{est:dispersive schrodinger}
\norm{e^{itH}P_cg}_{p'}\leq C_p|t|^{-3(1/p-1/2)}\norm{g}_p\qquad\forall g\in L^2(\R^3)\cap L^p(\R^3)
\quad\forall\mbox{ real }t\neq0.
\end{equation}
is satisfied in the following two cases:
\begin{enumerate}
\item[(i)] if $H$ is of generic type, $\beta>5/2$ and $1\leq p\leq2$; and
\item[(ii)] if $H$ is of exceptional type, $\beta>11/2$ and $3/2<p\leq2$.
\end{enumerate}
\end{theorem}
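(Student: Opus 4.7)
The plan is to reduce everything to the free Schr\"odinger dispersive estimate via a stationary-phase/wave-operator argument, which is the strategy Yajima develops in the cited paper. First I would recall that for the unperturbed operator $H_0=-\Delta$, the kernel of $e^{itH_0}$ is $(4\pi it)^{-3/2}e^{i|x-y|^2/4t}$, so the estimate $\norm{e^{itH_0}g}_{p'}\lesssim |t|^{-3(1/p-1/2)}\norm{g}_p$ follows for $p=1$ from the explicit bound on the kernel and for general $p\in[1,2]$ by Riesz--Thorin interpolation with the trivial $L^2\to L^2$ bound. The target estimate (\ref{est:dispersive schrodinger}) is therefore precisely the assertion that $e^{itH}P_c$ enjoys the same mapping properties as $e^{itH_0}$ on $L^p$.

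The main device is the pair of wave operators
\[W_{\pm}=s\text{-}\lim_{t\to\pm\infty}e^{itH}e^{-itH_0},\]
which under the decay hypothesis (\ref{est:potential}) with $\beta>1$ exist as partial isometries from $L^2(\R^3)$ onto the continuous spectral subspace $\mathrm{Ran}\,P_c$, and which satisfy the intertwining identity $e^{itH}P_c=W_{\pm}e^{itH_0}W_{\pm}^{*}$. If one can show that $W_{\pm}$ and $W_{\pm}^{*}$ extend to bounded operators on $L^p(\R^3)$ for the relevant range of $p$, then composition yields
\[\norm{e^{itH}P_{c}g}_{p'}\leq \norm{W_{\pm}}_{L^{p'}\to L^{p'}}\norm{e^{itH_0}W_{\pm}^{*}g}_{p'}\lesssim |t|^{-3(1/p-1/2)}\norm{W_{\pm}^{*}g}_{p}\lesssim |t|^{-3(1/p-1/2)}\norm{g}_{p},\]
which is exactly (\ref{est:dispersive schrodinger}).

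Consequently the heart of the proof is the $L^p$-boundedness of $W_{\pm}$. I would obtain this through the stationary representation
\[W_{+}=I+\frac{1}{2\pi i}\int_{0}^{\infty}R_{V}(\lambda+i0)V\bigl[R_{0}(\lambda+i0)-R_{0}(\lambda-i0)\bigr]\,\dd\lambda\]
(and the Born iteration $R_V=R_0-R_0VR_0+R_0VR_VVR_0$, or a suitable finite expansion), and then analyze each piece via oscillatory integral estimates. Away from the zero energy, $L^p$ bounds come from limiting absorption principle estimates in weighted $L^2$ spaces combined with the fact that the weight $\langle x\rangle^{-\beta/2}V^{1/2}$ is Hilbert--Schmidt; the microlocal argument converts these weighted bounds into $L^p\to L^p$ bounds using the fact that the free resolvent's kernel has explicit decay. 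Near zero energy one uses a resolvent expansion of Jensen--Kato type, and this is where the generic/exceptional dichotomy enters.

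The main obstacle, and the reason the hypotheses differ between (i) and (ii), is precisely the low-energy analysis. In the generic case $\mathcal N=\{0\}$, the leading singular term in the resolvent expansion at zero vanishes, the remainder is controlled using $\beta>5/2$, and one obtains $L^p$-boundedness of $W_{\pm}$ for all $1\le p\le 2$. In the exceptional case a resonance or zero eigenfunction produces an obstruction: the low-energy expansion of $R_V(\lambda\pm i0)$ contains a term of the form $\lambda^{-1/2}P_{\mathcal N}$ whose contribution to $W_{\pm}$ is only bounded on a restricted range of $L^p$ spaces, forcing both $\beta>11/2$ (to control higher-order remainders) and $3/2<p\leq 2$ (the range on which the singular contribution from resonances remains bounded). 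Once these resolvent and wave-operator bounds are in place, the intertwining identity and the free dispersive estimate close the argument immediately.
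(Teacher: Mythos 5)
The paper does not prove this theorem at all: it is quoted verbatim from Yajima's article \cite{kY05} and used as a black-box input, so there is no internal proof to compare your attempt against. That said, a few points about your sketch are worth making.

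Your route via the wave operators $W_{\pm}$ and the intertwining identity $e^{itH}P_c=W_{\pm}e^{itH_0}W_{\pm}^{*}$ is \emph{not} the argument in \cite{kY05}. In that paper Yajima works directly from the spectral representation
\[
e^{itH}P_c \;=\; \frac{1}{2\pi i}\int_{0}^{\infty} e^{it\lambda}\bigl[R_V(\lambda+i0)-R_V(\lambda-i0)\bigr]\,\dd\lambda,
\]
splits the integral into low- and high-energy regimes, expands $R_V(\lambda\pm i0)$ near the threshold in a Jensen--Kato expansion, and estimates the resulting oscillatory integrals directly in $L^p\to L^{p'}$ norm. There is no intertwining step. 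The wave-operator approach you describe is a genuine alternative (and is pursued by Yajima in separate papers on $L^p$-boundedness of wave operators with threshold singularities), but it is a different proof with different hypotheses.

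This is where a concrete gap arises. Your argument reduces everything to the claim that $W_{\pm}$ and $W_{\pm}^{*}$ are bounded on $L^p(\R^3)$ in the relevant range, but those boundedness results are not free and, more to the point, are not known under the decay hypothesis $\beta>5/2$. The classical $L^p$-boundedness theorems of Yajima for wave operators in the generic case require substantially faster pointwise decay of $V$ than $\nn<x>^{-5/2}$; the sharper results (e.g.\ Beceanu and collaborators) replace pointwise decay by scaling-critical integrability and do not obviously specialise to $\beta>5/2$. Thus even if the exceptional-case range $3/2<p\leq 2$ matches the corresponding wave-operator boundedness range, your sketch does not actually establish part (i) of the theorem with the stated hypothesis on $\beta$; a direct resolvent-expansion proof, which is what \cite{kY05} gives, circumvents this. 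If you want the wave-operator route, you should either assume stronger decay on $V$ or invoke a result on $L^p$-boundedness that is explicitly proved under $\beta>5/2$; otherwise the step ``if one can show that $W_{\pm}$ and $W_{\pm}^{*}$ extend to bounded operators on $L^p$\ldots'' is not justified under the given assumptions.
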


\begin{remark}\label{rem:foschi insufficient}
If $H$ is of exceptional type then (\ref{est:dispersive schrodinger}) \textit{cannot} hold when $p=1$, otherwise it would contradict the local decay estimate of Jensen--Kato \cite{JK79} or Murata \cite{mM82}. Hence one cannot apply the results of Foschi \cite{dF05a} to this situation.
\end{remark}

Our immediate goal is to apply Theorems \ref{th:KT98} and \ref{th:inhomogeneous} to the continuous spectral subspace of $H$. If $u$ is a solution to (\ref{eq:NLS IVP}), define $u_c$ by $u_c(t)=P_cu(t)$ for all $t$ in $[0,\tau]$. Similarly, let $P_{pp}$ denote the orthogonal projection onto the pure-point spectral subspace of $H$ and define $u_{pp}$ by $u_{pp}(t)=P_{pp}u(t)$ for all $t$ in $[0,\tau]$. It is clear that $u=u_{pp}+u_c$. 

The dispersive estimate (\ref{est:dispersive schrodinger}) gives rise to the admissibility conditions
\begin{equation}\label{eq:potental admissibility}
\frac{1}{q}+\frac{3}{2r}=\frac{3}{4},\quad4<q\leq\infty;\qquad
\frac{1}{\wt{q}}+\frac{3}{2\wt{r}}=\frac{3}{4},\quad4<\wt{q}\leq\infty
\end{equation}
sketched in Figure \ref{fig:potential admissibility}.
These correspond to the sharp $\sigma$-admissibility conditions in the case when $\sigma=3(1/p-1/2)$, $\Hi=\B_0=L^2(\R^3)$, $\B_1=L^p(\R^3)$ and $p\to3/2$ from above. Note that they also correspond to the Schr\"odinger admissibility conditions (\ref{eq:schrodinger admissible}) when $n=3$, but with restricted range.

When considering the inhomogeneous problem with zero initial data, the exponent conditions of Theorem \ref{th:inhomogeneous} reduce to the scaling condition
\begin{equation}\label{eq:potential scaling condition}
\frac{1}{q}+\frac{1}{\wt{q}}=\frac{3}{2}\left(1-\frac{1}{r}-\frac{1}{\wt{r}}\right)
\end{equation}
and the acceptability conditions
\begin{align}\label{eq:potential acceptability1}
1\leq q<\infty,\quad 2\leq r<3,\quad \frac{1}{q}<3\left(\frac{1}{2}-\frac{1}{r}\right),\quad\mbox{or }(q,r)=(\infty,2);\\
1\leq \wt{q}<\infty,\quad 2\leq\wt{r}<3,\quad\frac{1}{\wt{q}}<3\left(\frac{1}{2}-\frac{1}{\wt{r}}\right)
,\quad\mbox{or }(\wt{q},\wt{r})=(\infty,2).
\label{eq:potential acceptability2}
\end{align}
This is because $\sigma=3(1/p-1/2)<1$.

\begin{figure}
\centering

\begin{pspicture}(-.5,-.5)(4,4)
	\psset{unit=1cm}
	%scale: 6 psunits = 1 graph units
	%colour
	\newgray{gray2}{.8}
	%polygons	
	\pspolygon[linewidth=.8pt, linecolor=white, fillcolor=gray2, fillstyle=solid](3,0)(2,0)(2,3)
	%axes
	\psline[linewidth=.4pt]{->}(0,0)(0,3.5)
	\psline[linewidth=.4pt]{-o}(0,0)(2,0)
	\psline[linewidth=.4pt]{*->}(3,0)(3.5,0)
	%axis labels	
	\uput[200](0,3){$\frac{1}{2}$}
	\uput[d](3,0){$\frac{1}{2}$}
	\uput[d](3.5,0){$\frac{1}{r}$}
	\uput[160](0,3.5){$\frac{1}{q}$}
	\uput[d](2,0){$\frac{1}{3}$}
	\uput[l](0,1.5){$\frac{1}{4}$}
	%admiss line
	\psline[linewidth=.8pt, linestyle=dashed]{*-o}(3,0)(2,3)
	\psline[linewidth=.8pt, linestyle=dashed]{o-o}(2,3)(2,0)
	\psline[linewidth=.8pt, linestyle=dashed]{*-o}(3,0)(2,0)
	\psline[linewidth=.8pt]{*-o}(3,0)(2,1.5)
	%axes markers
	%\psdots[dotstyle=|]
	\psdots[dotstyle=|, dotangle=90](0,1.5)(0,3)
	%vertex labels
	\uput[225](0,0){$0$}
	\uput[55](3,0){$A$}
	\uput[180](2,1.5){$B$}
\end{pspicture}

\caption{The line segment $AB$ and the shaded region respectively give admissible and acceptable exponents for Strichartz estimates associated to the inhomogeneous initial value problem (\ref{eq:NLS IVP}).}
\label{fig:potential admissibility}
\end{figure}
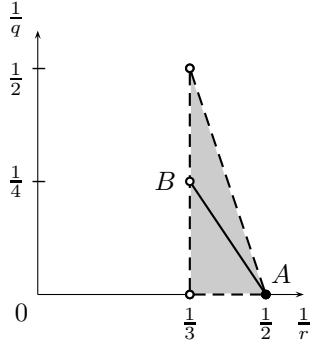

\begin{corollary}\label{cor:strichartz for u_c}
Suppose that $u$ is a (weak) solution to problem (\ref{eq:NLS IVP}) for some data $f$ in $L^2(\R^3)$, some source $F$ and for some time $\tau$ in $(0,\infty)$.
\begin{enumerate}
\item[(i)] If $(q,r)$ and $(\wt{q},\wt{r})$ satisfy the admissibility condition (\ref{eq:potental admissibility}) and $F$ belongs to $L^{\wt{q}'}([0,\tau];L^{\wt{r}'}(\R^3))$, then
\begin{equation}
\norm{u_c}_{L^q([0,\tau],L^r(\R^3))}\lesssim\norm{f}_{L^2(\R^3)}+\norm{F}_{L^{\wt{q}'}([0,\tau],L^{\wt{r}'}(\R^3))}.
\end{equation}
\item[(ii)] If the exponent pairs $(q,r)$ and $(\wt{q},\wt{r})$ satisfy conditions (\ref{eq:potential scaling condition}), (\ref{eq:potential acceptability1}) and (\ref{eq:potential acceptability2}), $f=0$ and $F\in L^{\wt{q}'}([0,\tau];L^{\wt{r}'}(\R^3))$, then
\[\norm{u_c}_{L^q([0,\tau],L^r(\R^3))}\lesssim\norm{F}_{L^{\wt{q}'}([0,\tau],L^{\wt{r}'}(\R^3))}.\]
\end{enumerate}
\end{corollary}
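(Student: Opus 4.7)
The plan is to cast problem (\ref{eq:NLS IVP}) in the abstract framework of Sections~\ref{s:intro}--\ref{s:global estimates} by taking $\Hi = L^2(\R^3)$, defining
\[U(t)g = e^{itH}P_c g\qquad (t\in\R,\ g\in L^2(\R^3)),\]
and choosing the Banach couple $(\B_0,\B_1) = (L^2(\R^3),L^p(\R^3))$ for a parameter $p\in(3/2,2)$ to be specified. Since $P_c$ is a self-adjoint projection commuting with $e^{itH}$, we have $U(s)U(t)^* = e^{i(s-t)H}P_c$; the energy estimate (\ref{est:energy}) then follows from unitarity of $e^{itH}$ on $L^2$, while the dispersive estimate (\ref{est:untrunc_decay}) with $\sigma = 3(1/p - 1/2)$ is exactly Theorem~\ref{th:dispersive schrodinger}. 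Because $P_c$ commutes with $H$, applying $P_c$ to the Duhamel representation of $u$ gives
\[u_c(t) = (TP_c f)(t) - i\bigl((TT^*)_R\wt{F}\bigr)(t),\qquad t\in[0,\tau],\]
where $\wt{F}$ denotes the extension of $F$ by zero outside $[0,\tau]$.

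For part (i), fix admissible pairs $(q,r),(\wt{q},\wt{r})$ and set $\theta = 2/(q\sigma)$ and $\wt{\theta} = 2/(\wt{q}\sigma)$. The constraints $\theta,\wt{\theta}\in(0,1]$ reduce to the single condition $p\leq\min\{6q/(4+3q),6\wt{q}/(4+3\wt{q})\}$, and this minimum is strictly greater than $3/2$ because $q,\wt{q}>4$, so a valid $p$ exists. Both $(q,\theta)$ and $(\wt{q},\wt{\theta})$ are then sharp $\sigma$-admissible. By real interpolation, $\Bth = (L^2,L^p)_{\theta,2}$ equals the Lorentz space $L^{s_\theta,2}(\R^3)$, where $1/s_\theta = (1-\theta)/2+\theta/p$; an elementary calculation using the admissibility relation $1/q+3/(2r) = 3/4$ verifies that $s_\theta' = r$, and symmetrically $s_{\wt{\theta}}' = \wt{r}$. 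Since $r,\wt{r}\geq 2$, the Lorentz embeddings $L^{r,2}\hookrightarrow L^r$ and $L^{\wt{r}'}\hookrightarrow L^{\wt{r}',2}$ translate estimates (\ref{eq:str1}) and (\ref{eq:str3}) of Theorem~\ref{th:KT98} into the desired Lebesgue bounds for $TP_cf$ and $(TT^*)_R\wt{F}$; combining them via the triangle inequality yields part (i).

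For part (ii), the acceptability conditions (\ref{eq:potential acceptability1})--(\ref{eq:potential acceptability2}) force $r,\wt{r}<3$, and (\ref{eq:potential scaling condition}) then yields $1/q+1/\wt{q}<1/2$, so only case (i) of Theorem~\ref{th:inhomogeneous} is required. Pick $p$ in the nonempty interval $(3/2,\min\{r/(r-1),\wt{r}/(\wt{r}-1)\})$ and set
\[\theta = \frac{(r-2)p}{r(2-p)},\qquad \wt{\theta} = \frac{(\wt{r}-2)p}{\wt{r}(2-p)};\]
each lies in $[0,1]$ by construction and is chosen so that $s_\theta' = r$ and $s_{\wt{\theta}}' = \wt{r}$. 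A direct computation gives $\sigma\theta = 3(1/2-1/r)$, which transforms the acceptability condition (\ref{eq:potential acceptability1}) into the abstract $\sigma$-acceptability $1/q<\sigma\theta$, and the scaling (\ref{eq:potential scaling condition}) into (\ref{eq:inhomogeneous scaling condition}). Because $\sigma<1$, the conditions (\ref{eq:nonsharp theta}) are satisfied automatically by the remark following Theorem~\ref{th:inhomogeneous}. Estimate (\ref{eq:str3}) combined with the Lorentz embeddings then yields part (ii); the boundary pairs $(\infty,2)$ correspond to $\theta = 0$ or $\wt{\theta} = 0$ and are covered by taking the acceptable point $(\infty,0)$, which is explicitly permitted by Definition~\ref{def:sigma-acceptable}.

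The main technical obstacle lies in the coordinated choice of $p$: a single dispersive exponent must simultaneously render both $(q,\theta)$ and $(\wt{q},\wt{\theta})$ valid abstract exponent pairs, and it must exceed $3/2$ so that Yajima's dispersive estimate applies uniformly to generic and exceptional potentials. Verifying that the interval of eligible $p$ is nonempty in each case is the crucial algebraic step, and the restriction $r,\wt{r}<3$ in part (ii) arises precisely from this compatibility requirement.
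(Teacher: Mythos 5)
Your proposal is correct and follows essentially the same route as the paper: set $\Hi=L^2(\R^3)$, $U(t)=e^{itH}P_c$ (the paper absorbs the time cutoff into $U(t)$ whereas you extend $F$ by zero, but these are equivalent), choose $(\B_0,\B_1)=(L^2,L^p)$ with $3/2<p<\min\{r',\wt{r}'\}$, invoke Yajima's dispersive estimate to get $\sigma=3(1/p-1/2)<1$, and then apply Theorems~\ref{th:KT98} and~\ref{th:inhomogeneous} together with the Lorentz embeddings. Your explicit algebra (the formulas for $\theta,\wt{\theta}$, the check $s_\theta'=r$, the verification that $r<3$ forces $1/q+1/\wt{q}<1/2<1$, and the observation that $\sigma<1$ makes (\ref{eq:nonsharp theta}) automatic) supplies the computations the paper leaves implicit; the two arguments are otherwise identical.
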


\begin{proof}
Fix $p$ such that
\[3/2<p<\min\{r',\wt{r}'\}.\]
For $t$ in $\R$ define $U(t)$ on $L^2(\R^3)$ by $U(t)=1_{[0,\tau]}(t)e^{itH}P_c$. If $g$ belongs to $L^2(\R^3)\cap L^p(\R^3)$, then
\begin{align*}
\norm{U(s)U(t)^*g}_{p'}
&\leq \norm{e^{i(s-t)H}P_cg}_{p'}\\
&\leq |s-t|^{-3(1/p-1/2)}\norm{g}_p.
\end{align*}
by Theorem \ref{th:dispersive schrodinger}.
Therefore $\{U(t):t\in\R\}$ satisfies the dispersive estimate (\ref{est:untrunc_decay}) when $\sigma=3(1/p-1/2)$,
$\B_0=\Hi=L^2(\R^3)$ and $\B_1=L^p(\R^3)$.
Moreover, since each operator $e^{-itH}$ on $L^2(\R^3)$ is unitary and $P_c$ is an orthogonal projection, $\{U(t):t\in\R\}$ also satisfies the energy estimate (\ref{est:energy}). Now if $u$ is a weak solution to (\ref{eq:NLS IVP}) then
\begin{equation}\label{eq:NLS solution}
u(t)=e^{itH}f-i\int_0^te^{i(t-s)H}F(s)\,\dd s
\end{equation}
by Duhamel's principle and the functional calculus for self-adjoint operators. Hence
\begin{align*}
u_c(t)
&=e^{itH}P_cf-i\int_0^te^{i(t-s)H}P_cF(s)\,\dd s\\
&=Tf(t)-i(TT^*)_RF(t).
\end{align*}
An application of Theorem \ref{th:KT98} and Theorem \ref{th:inhomogeneous} gives the required spacetime estimates for $u_c$ once we observe that
\[L^{r'}(\R^3)\subseteq L^{r',2}(\R^3)=\Bth,\]
where $1/r'=(1-\theta)/2+\theta/p$ and the inclusion is continuous.
\end{proof}

To find a spacetime estimate for the solution $u$ of (\ref{eq:NLS IVP}), we now need only analyse the projection of each $u(t)$ onto the pure point spectral subspace of $H$. It is known (see \cite[p.~477]{kY05}) that eigenfunctions of $H$ with negative eigenvalues decay at least exponentially. Since such eigenfunctions belong to the domain of $\Delta$, they are necessarily continuous and consequently also belong to $L^r(\R^3)$ whenever $1\leq r\leq\infty$ by Sobolev embedding. However, if $0$ is an eigenvalue then a corresponding eigenfunction $\phi$ may decay as slowly as $C\langle x\rangle^{-2}$ when $|x|\to\infty$. Hence, in general, $\phi$ is a member of $L^p(\R^3)$ only when $p>3/2$.

Except in the case when the time exponent is $\infty$, one cannot hope for a spacetime estimate for $u_{pp}$ which is global in time due to (\ref{eq:static eigenfunction}). However, one can still obtain spacetime estimates on finite time intervals. For illustrative purposes, the next lemma gives a crude spacetime estimate for $u_{pp}$ when $H$ is of exceptional type. No further analysis on $H$ is needed.

\begin{lemma}\label{lemma:strichartz for u_pp}
Suppose that $\tau>0$, that $q,\wt{q}\in[1,\infty]$ and that $r,\wt{r}\in(3/2,3)$. Suppose also that $f\in L^2(\R^3)$, $F\in L^{\wt{q}'}([0,\tau],L^{\wt{r}'}(\R^3))$ and $H$ is of exceptional type. If $u$ is a (weak) solution to problem (\ref{eq:NLS IVP}) then
\[\norm{u_{pp}}_{L^q([0,\tau],L^r(\R^3))}\leq C_{r,H}\left(\norm{P_{pp}f}_2+\tau^{1/q+1/\wt{q}}\norm{P_{pp}F}_{L^{\wt{q}'}([0,\tau],L^{\wt{r}'}(\R^3))}\right)\]
where the positive constant $C_{r,H}$ depends on $r$ and $H$ only. If $q=\wt{q}=\infty$ then $\tau^{1/q+1/\wt{q}}$ is interpreted as $1$.
\end{lemma}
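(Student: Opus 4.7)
The plan is to exploit the fact that, under the stated hypotheses on $V$, the pure point spectral subspace $\mathcal{H}_{pp}$ of $H$ on $L^2(\R^3)$ is finite dimensional, since $H$ has only finitely many eigenvalues, each of finite multiplicity. Fix an orthonormal basis $\{\phi_j\}_{j=1}^{N}$ of $\mathcal{H}_{pp}$ with $H\phi_j=\lambda_j\phi_j$, so that $P_{pp}g=\sum_{j=1}^{N}\ip<g,\phi_j>\phi_j$ whenever the pairings make sense. A key preliminary observation is that, for every $s\in(3/2,\infty]$, each $\phi_j$ lies in $L^s(\R^3)$: eigenfunctions at strictly negative eigenvalues decay exponentially and are continuous (hence lie in every $L^s$), while a zero-energy eigenfunction---possible only in the exceptional case---decays like $\nn<x>^{-2}$ and so lies in $L^s(\R^3)$ precisely when $s>3/2$. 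Consequently $M:=\max_{1\leq j\leq N}\bigl(\norm{\phi_j}_{L^r}+\norm{\phi_j}_{L^{\wt{r}}}\bigr)$ is finite and depends only on $r$, $\wt{r}$ and $H$.

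Projecting Duhamel's formula (\ref{eq:NLS solution}) onto $\mathcal{H}_{pp}$ and applying the functional calculus for $H$ gives
\[
u_{pp}(t)=\sum_{j=1}^{N}e^{it\lambda_j}\ip<P_{pp}f,\phi_j>\phi_j
-i\sum_{j=1}^{N}\phi_j\int_0^{t}e^{i(t-s)\lambda_j}\ip<P_{pp}F(s),\phi_j>\,\dd s.
\]
Taking $L^r$-norms in space, invoking Cauchy--Schwarz on the first inner product (using orthonormality of the $\phi_j$) and H\"older's inequality on the second (pairing $L^{\wt{r}'}$ with $L^{\wt{r}}$), I obtain the pointwise-in-time bound
\[
\norm{u_{pp}(t)}_{L^r}\leq NM\norm{P_{pp}f}_{L^2}+NM^{2}\int_0^{\tau}\norm{P_{pp}F(s)}_{L^{\wt{r}'}}\,\dd s.
\]

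Finally I would take the $L^q([0,\tau])$-norm of both sides. The first term, being constant in $t$, contributes a factor of $\tau^{1/q}$, while a further application of H\"older's inequality on the Duhamel integral replaces $\norm{P_{pp}F}_{L^1([0,\tau];L^{\wt{r}'})}$ by $\tau^{1/\wt{q}}\norm{P_{pp}F}_{L^{\wt{q}'}([0,\tau];L^{\wt{r}'})}$, yielding the advertised factor $\tau^{1/q+1/\wt{q}}$ on the forcing term. The additional $\tau^{1/q}$ multiplying $\norm{P_{pp}f}_{L^2}$ is absorbed into the final constant $C_{r,H}$ (the convention that $\tau^{1/q+1/\wt{q}}$ equals $1$ when $q=\wt{q}=\infty$ is consistent with this). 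There is no real technical obstacle: the argument reduces to finite-dimensional linear algebra together with two applications of H\"older's inequality. The only delicate point is the $L^s$-integrability of zero-energy resonances, which is precisely what forces the restriction $r,\wt{r}>3/2$ in the exceptional case; the upper constraint $r,\wt{r}<3$ plays no role here and only appears for consistency with the Strichartz estimates used for the continuous part $u_c$ in Corollary \ref{cor:strichartz for u_c}.
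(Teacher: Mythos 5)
Your argument is correct and follows essentially the same route as the paper: diagonalise $H$ on the finite-dimensional pure-point subspace, use that every eigenfunction lies in $L^{s}(\R^3)$ for $s>3/2$ (the threshold being set by possible zero-energy eigenfunctions in the exceptional case), substitute into Duhamel's formula, and close via finite-dimensional norm equivalence plus H\"older in space and in time. The one caveat is your remark that the extra $\tau^{1/q}$ on the data term can be ``absorbed into $C_{r,H}$'': that is not compatible with a constant asserted to depend on $r$ and $H$ only, and the honest output of the computation is $C_{r,H}\,\tau^{1/q}\bigl(\norm{P_{pp}f}_{2}+\tau^{1/\wt{q}}\norm{P_{pp}F}_{L^{\wt{q}'}([0,\tau];L^{\wt{r}'})}\bigr)$; however the paper's own chain of inequalities drops the same $\tau^{1/q}$ on the first term, so this is a defect of the lemma as stated rather than of your proof.
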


\begin{proof}
Suppose that $\{\phi_j:j=1,\ldots,n\}$ is a complete orthonormal set of eigenfunctions for $H$ on $L^2(\R^3)$ corresponding to the set $\{\lambda_j:j=1,\ldots,n\}$ of eigenvalues (counting multiplicities). Write
\[P_{pp}f=\sum_{i=1}^n\alpha_j\phi_j \qquad\mbox{and}\qquad P_{pp}F(s)=\sum_{j=1}^n\beta_j(s)\phi_j,\]
where each $\alpha_j$ and $\beta_j(s)$ is a complex scalar. By orthogonality and the equivalence of norms in finite dimensional normed spaces (see \cite[p.~69]{jC90}), there are positive constants $C$ and $C'$ (both independent of $f$, $F(s)$, $\{\alpha_j\}$ and $\{\beta_j(s)\}$) such that
\[\sum_{j=1}^n|\alpha_j|\leq C\Big(\sum_{j=1}^n|\alpha_j|^2\Big)^{1/2}=C\norm{P_{pp}f}_2\]
and
\[\sum_{j=1}^n|\beta_j(s)|\leq C\Big(\sum_{j=1}^n|\beta_j(s)|^2\Big)^{1/2}=C\norm{P_{pp}F(s)}_2\leq C'\norm{P_{pp}F(s)}_{\wt{r}'}.\]
Following from (\ref{eq:NLS solution}),
\begin{align*}
u_{pp}(t)
&=e^{itH}P_{pp}f-i\int_0^te^{i(t-s)H}P_{pp}F(s)\,\dd s\\
&=\sum_{j=1}^n\alpha_je^{it\lambda_j}\phi_j-i\int_0^t\sum_{j=1}^n\beta_j(s)e^{i(t-s)\lambda_j}\phi_j\,\dd s.
\end{align*}
By taking the $L^q([0,\tau],L^r(\R^3))$ norm and applying H\"older's inequality,
\begin{align*}
\norm{u_{pp}}_{L^q([0,\tau],L^r(\R^3))}
&\leq \sum_{i=j}^n|\alpha_j|\norm{\phi_j}_r+
\norm{\int_0^t\sum_{j=1}^n|\beta_j(s)|\norm{\phi_j}_r\,\dd s}_{L^q([0,\tau])}\\
&\leq C''\max_{1\leq j\leq n}\norm{\phi_j}_r
\left(\norm{P_{pp}f}_2+\norm{\int_0^t\norm{P_{pp}F(s)}_{\wt{r}'}\,\dd s}_{L^q([0,\tau])}\right)\\
&\leq C_{r,H}\left(\norm{P_{pp}f}_2+\tau^{1/q}\norm{P_{pp}F}_{L^1([0,\tau];L^{\wt{r}'}(\R^3))}\right)\\
&\leq C_{r,H}\left(\norm{P_{pp}f}_2+\tau^{1/q+1/\wt{q}}\norm{P_{pp}F}_{L^{\wt{q}'}([0,\tau];L^{\wt{r}'}(\R^3))}\right)
\end{align*}
where
\[C_{r,H}=C''\max_{1\leq j\leq n}\norm{\phi_j}_r.\]
This completes the proof.
\end{proof}

Combining the lemma with Corollary \ref{cor:strichartz for u_c} and the fact that $u=u_c+u_{pp}$ gives the following result.

\begin{corollary}\label{cor:strichartz for schrodinger with potential}
Suppose that $H$ is of exceptional type, that $\tau>0$ and that $(q,r)$ and $(\wt{q},\wt{r})$ satisfy the admissibility conditions (\ref{eq:potental admissibility}).
If $f\in L^2(\R^3)$ and $F\in L^{\wt{q}'}([0,\tau],L^{\wt{r}'}(\R^3))$ and $u$ is a (weak) solution to problem (\ref{eq:NLS IVP}) then
\begin{equation}\label{est:schrodinger strichartz with potential}
\norm{u}_{L_t^q([0,\tau],L^r(\R^3))}
\lesssim
\norm{f}_{L^2(\R^3)}+\big(1+\tau^{1/q+1/\wt{q}}\big)\norm{F}_{L^{\wt{q}'}_t([0,\tau],L^{\wt{r}'}(\R^3))}.
\end{equation}
If $q=\wt{q}=\infty$ then $\tau^{1/q+1/\wt{q}}$ is interpreted as $1$. If $f=0$ then the conditions on $(q,r)$ and $(\wt{q},\wt{r})$ may be relaxed to satisfying (\ref{eq:potential scaling condition}), (\ref{eq:potential acceptability1}) and (\ref{eq:potential acceptability2}).
\end{corollary}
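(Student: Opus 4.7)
The proof is assembled by combining the two preceding results, so the plan is essentially a bookkeeping exercise.

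First I would record the orthogonal decomposition $u(t)=u_c(t)+u_{pp}(t)$ available for every $t\in[0,\tau]$, together with the basic bounds $\norm{P_cf}_{L^2(\R^3)}\leq\norm{f}_{L^2(\R^3)}$ and $\norm{P_{pp}f}_{L^2(\R^3)}\leq\norm{f}_{L^2(\R^3)}$ (and similarly inside the spacetime norm for $F$, using that $P_{pp}$ is the identity on a finite-dimensional subspace and in particular bounded on $L^{\wt{r}'}$, or simply noting that one only needs the $L^2$-boundedness of $P_{pp}$ and Hölder in space to reduce to norms of $F$ itself).

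Next I would apply Corollary \ref{cor:strichartz for u_c}(i) under the admissibility hypothesis (\ref{eq:potental admissibility}) to obtain
\[
\norm{u_c}_{L^q([0,\tau],L^r(\R^3))}\lesssim \norm{f}_{L^2(\R^3)}+\norm{F}_{L^{\wt{q}'}([0,\tau],L^{\wt{r}'}(\R^3))}.
\]
In parallel, I would verify that the admissibility condition forces $r\in[2,3)\subset(3/2,3)$ (from $1/q+3/(2r)=3/4$ with $q>4$), and similarly for $\wt r$, so the hypotheses of Lemma \ref{lemma:strichartz for u_pp} are satisfied. That lemma then gives
\[
\norm{u_{pp}}_{L^q([0,\tau],L^r(\R^3))}\lesssim \norm{f}_{L^2(\R^3)}+\tau^{1/q+1/\wt{q}}\norm{F}_{L^{\wt{q}'}([0,\tau],L^{\wt{r}'}(\R^3))},
\]
with the constant absorbing the factors $\norm{\phi_j}_r$. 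Adding the two estimates via the triangle inequality, and combining the two constant-order contributions to $F$ into the factor $(1+\tau^{1/q+1/\wt{q}})$, yields (\ref{est:schrodinger strichartz with potential}).

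For the last sentence of the corollary (the case $f=0$), I would repeat the argument but substitute Corollary \ref{cor:strichartz for u_c}(ii) for part (i); the acceptability conditions (\ref{eq:potential acceptability1})--(\ref{eq:potential acceptability2}) force $r,\wt r\in[2,3)$, which again lies in the range $(3/2,3)$ required by Lemma \ref{lemma:strichartz for u_pp}, so the pure-point part is handled identically. No real obstacle is expected: the only thing to watch is that each exponent range used by the component results is actually covered by the admissibility or acceptability hypotheses stated in the corollary, and the interpretation $\tau^{1/q+1/\wt q}=1$ when $q=\wt q=\infty$ is inherited directly from Lemma \ref{lemma:strichartz for u_pp}.
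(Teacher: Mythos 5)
Your proof matches the paper's own argument: the paper simply states that the corollary follows by combining Lemma \ref{lemma:strichartz for u_pp} with Corollary \ref{cor:strichartz for u_c} and the decomposition $u=u_c+u_{pp}$, and your write-up fills in precisely the bookkeeping (checking $r,\wt{r}\in[2,3)\subset(3/2,3)$, reducing $\norm{P_{pp}F}$ to $\norm{F}$, and inheriting the $\tau^{1/q+1/\wt q}=1$ convention). The only small imprecision is that $P_{pp}$ is bounded on $L^{\wt r'}(\R^3)$ not because it is finite-rank per se but because $P_c=I-P_{pp}$ is bounded on $L^p(\R^3)$ for $2/3<p<3$ (as recorded earlier in the section) and $\wt r'\in(3/2,2]$ lies in this range; this does not affect the validity of the argument.
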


%%%%%%%%%%%%%%%%%%%%%%%%%%%%%%%%%%%%%%%%%%%%%%%%%%%%%%%%%%%%%%%%%%%%%%%%%%%%%%%%%%%%%%%%%%%%%%%%%
\section{Application to the wave equation}\label{s:wave}
%%%%%%%%%%%%%%%%%%%%%%%%%%%%%%%%%%%%%%%%%%%%%%%%%%%%%%%%%%%%%%%%%%%%%%%%%%%%%%%%%%%%%%%%%%%%%%%%%

In this section we consider the wave equation. Keel and Tao \cite{KT98} indicated that Theorem \ref{th:KT98} could be used to obtain Strichartz estimates (in Besov norms), following a similar approach to \cite{GV95}, but did not show any details. We shall therefore do so here, presenting the results in Corollary \ref{cor:wave strichartz}. These estimates coincide with those of \cite{GV95}, except that the so-called `endpoint' estimate is now included (see the point $Q$ in Figure \ref{fig:wave-acceptable region} (a)). Next we apply Theorem \ref{th:inhomogeneous} to the wave equation, thus obtaining a new set of inhomogeneous Strichartz estimates for the wave equation (see Corollaries \ref{cor:inhomogeneous strichartz for wave} and \ref{cor:inhomogeneous strichartz for wave sharp}). Finally, we indicate how a small modification of these arguments obtains Strichartz estimates for the Klein--Gordon equation (see Remark \ref{rem:KGE}).

We begin with a brief introduction to homogeneous Besov spaces; for a treatment at greater depth, the reader is referred to \cite{BL76} and \cite{hT78}. To start, we consider Littlewood--Paley dyadic decompositions. Suppose that $\psi\in C^{\infty}_0(\R^n)$ and that its Fourier transform $\hat{\psi}$ satisfies the properties $0\leq\hat{\psi}\leq1$, $\hat{\psi}(\xi)=1$ whenever $|\xi|\leq1$ and $\hat{\psi}(\xi)=0$ whenever $|\xi|\geq2$. If $j\in\Z$ then define $\varphi_j$ by $\hat{\varphi}_0(\xi)=\hat{\psi}(\xi)-\hat{\psi}(2\xi)$ and $\hat{\varphi}_j(\xi)=\hat{\varphi}_0(2^{-j}\xi)$. This means that
\[
\mathrm{supp}(\hat{\varphi}_j)\subseteq\{\xi\in\R^n:2^{j-1}\leq|\xi|\leq2^{j+1}\}
\qquad\mbox{and}\qquad
\sum_{j\in\Z}\hat{\varphi}_j(\xi)=1
\]
for any $\xi$ in $\R^n\bs\{0\}$, with at most two nonvanishing terms in the sum. When $j\in\Z$, define $\wt{\varphi}_j$ by the formula
\begin{equation}\label{eq:varphi_j}
\wt{\varphi}_j=\varphi_{j-1}+\varphi_j+\varphi_{j+1}
\end{equation}
This implies that $\hat{\varphi}_j=\hat{\wt{\varphi}}_j\hat{\varphi}_j$, thereby allowing for the use of the standard trick
\begin{equation}\label{eq:convolution trick}
\varphi_j*u=\wt{\varphi}_j*\varphi_j*u
\end{equation}
for any tempered distribution $u$.

If $\rho\in\R$, $1\leq r\leq\infty$, $1\leq s\leq\infty$ and $u$ is a tempered distribution then define $\norm{u}\besn{\rho}{r}{s}$ by
\begin{equation}\label{eq:besov norm}
\norm{u}\besn{\rho}{r}{s}=\left(\sum_{j\in\Z}2^{\rho j}\norm{\varphi*u}_{L^r(\R^n)}^s\right)^{1/s}.
\end{equation}
We note that if $u$ is a polynomial then $\mathrm{supp}(\hat{u})=\{0\}$ and hence $\norm{u}\besn{\rho}{r}{s}=0$. Conversely if $\norm{u}\besn{\rho}{r}{s}=0$ then $u$ is a polynomial. We therefore define the homogeneous Besov space $\bes{\rho}{r}{s}$ to be the completion in $\norm{\cdot}\besn{\rho}{r}{s}$ of the set of equivalence classes of tempered distributions $u$, modulo polynomials, such that $\norm{u}\besn{\rho}{r}{s}<\infty$.

We now consider real interpolation of homogeneous Besov spaces. In what follows, $\mathcal{S}'(\R^n)$ denotes the set of tempered distributions on $\R^n$, $\mathcal{P}(\R^n)$ the set of polynomials on $\R^n$ and $L^{r,s}(\R^n)$ the Lorentz space on $\R^n$.

\begin{lemma}\label{lem:besov interpolation}\cite[Section 2.4]{hT78}
Suppose that $\rho_0,\rho_1\in\R$, $\rho_0\neq\rho_1$, $r_0,r_1\in[1,\infty)$, $r_0\neq r_1$, $s_0,s_1\in(1,\infty)$ and $\theta\in(0,1)$. Then
\[\Big(\bes{\rho_0}{r_0}{s_0},\bes{\rho_0}{r_0}{s_0}\Big)_{\theta,s}=\bes{\rho}{r}{s,(s)}\]
where
\[\rho=(1-\theta)\rho_0+\theta\rho_1,\qquad\frac{1}{r}=\frac{1-\theta}{r_0}+\frac{\theta}{r_1},
\qquad\frac{1}{s}=\frac{1-\theta}{s_0}+\frac{\theta}{s_1}\]
and
\[\bes{\rho}{r}{s,(s)}=\left\{u\in\mathcal{S}'(\R^n)\bs\mathcal{P}(\R^n):
\norm{\big\{2^{\rho j}\norm{\varphi_j*u}_{L^{r,s}(\R^n)}\big\}_{j\in\Z}}_{\ell^s}<\infty\right\}.\]
\end{lemma}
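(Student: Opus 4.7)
The strategy is a Littlewood--Paley retraction/coretraction argument that reduces the identification to a real interpolation formula for weighted vector-valued sequence spaces.

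First I would introduce the weighted sequence space
\[
\ell_{\rho}^{s}(X):=\Big\{\{f_{j}\}_{j\in\Z}\subset X:\Big(\sum_{j\in\Z}2^{\rho j s}\norm{f_{j}}_{X}^{s}\Big)^{1/s}<\infty\Big\}
\]
for any Banach space $X$, with the natural sup-modification when $s=\infty$. In view of (\ref{eq:besov norm}), the map $Ru=\{\varphi_{j}*u\}_{j\in\Z}$ is an isometric embedding of $\bes{\rho}{r}{s}$ into $\ell_{\rho}^{s}(L^{r}(\R^{n}))$, and, by the identity (\ref{eq:convolution trick}), the operator $S\{f_{j}\}=\sum_{j\in\Z}\wt{\varphi}_{j}*f_{j}$ is a bounded left inverse (working modulo polynomials) via standard Fourier multiplier estimates on the $\wt{\varphi}_{j}$. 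Thus $\bes{\rho}{r}{s}$ is a retract of $\ell_{\rho}^{s}(L^{r}(\R^{n}))$, and likewise $\bes{\rho}{r}{s,(s)}$ is a retract of $\ell_{\rho}^{s}(L^{r,s}(\R^{n}))$.

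Next I would invoke the retraction/coretraction theorem of \cite[Section 1.2.4]{hT78}, which preserves real interpolation functors and yields
\[
\big(\bes{\rho_{0}}{r_{0}}{s_{0}},\bes{\rho_{1}}{r_{1}}{s_{1}}\big)_{\theta,s}
\;\cong\; SR\Big[\big(\ell_{\rho_{0}}^{s_{0}}(L^{r_{0}}),\ell_{\rho_{1}}^{s_{1}}(L^{r_{1}})\big)_{\theta,s}\Big].
\]
The main technical input, drawn from \cite[Section 1.18]{hT78}, is that whenever the weights are genuinely distinct, that is $\rho_{0}\neq\rho_{1}$, one has
\[
\big(\ell_{\rho_{0}}^{s_{0}}(X_{0}),\ell_{\rho_{1}}^{s_{1}}(X_{1})\big)_{\theta,s}=\ell_{\rho}^{s}\big((X_{0},X_{1})_{\theta,s}\big)
\]
for any interpolation couple $(X_{0},X_{1})$, with $\rho=(1-\theta)\rho_{0}+\theta\rho_{1}$. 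Specialising to $X_{i}=L^{r_{i}}(\R^{n})$ (where $r_{0}\neq r_{1}$) and then applying \cite[Theorem 5.2.1]{BL76} to identify $(L^{r_{0}},L^{r_{1}})_{\theta,s}=L^{r,s}(\R^{n})$, I obtain
\[
\big(\ell_{\rho_{0}}^{s_{0}}(L^{r_{0}}),\ell_{\rho_{1}}^{s_{1}}(L^{r_{1}})\big)_{\theta,s}=\ell_{\rho}^{s}\big(L^{r,s}(\R^{n})\big).
\]

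Transporting this identity back through $R$ and $S$ identifies the interpolated Besov space with the set of equivalence classes of tempered distributions modulo polynomials whose Littlewood--Paley decomposition $\{\varphi_{j}*u\}$ lies in $\ell_{\rho}^{s}(L^{r,s}(\R^{n}))$, which is by definition $\bes{\rho}{r}{s,(s)}$. The chief obstacle is the weighted vector-valued sequence-space interpolation formula: it is precisely the distinctness $\rho_{0}\neq\rho_{1}$ that forces the inner interpolation index to coincide with the outer parameter $s$, and the assumptions $s_{0},s_{1}\in(1,\infty)$ and $r_{0},r_{1}\in[1,\infty)$ prevent the endpoint degeneracies (reflexivity failures, non-coincidence of real methods) that would otherwise obstruct the classical interpolation theorems being invoked.
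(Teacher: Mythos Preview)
The paper does not supply a proof of this lemma; it is quoted directly from \cite[Section 2.4]{hT78} as a known result. Your retraction/coretraction argument, reducing to the weighted vector-valued sequence-space formula in \cite[Section 1.18]{hT78} and then invoking the Lorentz identification $(L^{r_0},L^{r_1})_{\theta,s}=L^{r,s}$, is exactly the machinery Triebel uses in the cited section, so your proposal is correct and aligned with the source the paper defers to.
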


If $s=2$ then we can use the above lemma together with the continuous embedding $L^p=L^{p,p}\subseteq L^{p,2}$ (whenever $p\leq2$) to obtain the continuous inclusion
\begin{equation}\label{eq:real interpolation of besov}
\bes{\rho}{r}{2}\subseteq\bes{\rho}{r}{2,(2)}=\Big(\bes{\rho_0}{r_0}{2},\bes{\rho_0}{r_0}{2}\Big)_{\theta,s}
\end{equation}
whenever $r\leq2$. Two other continuous embedding results will be needed. The first is an easy consequence of Young's inequality and the definition of homogeneous Besov spaces. 

\begin{lemma}\label{lem:besov embedding}\cite[Section 6.5]{BL76}
Suppose that $1\leq r_2\leq r_1\leq\infty$, $1\leq s\leq\infty$, $\rho_1,\rho_2\in\R$ and $\rho_1-n/r_1=\rho_2-n/r_2$. Then $\bes{\rho_2}{r_2}{s}\subseteq\bes{\rho_1}{r_1}{s}$ and
\begin{equation}\label{eq:besov embedding}
\norm{u}\besn{\rho_1}{r_1}{s}\leq C\norm{u}\besn{\rho_2}{r_2}{s}
\end{equation}
for some positive constant $C$.
\end{lemma}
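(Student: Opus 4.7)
The plan is to reduce the claimed inclusion to a pointwise (in the dyadic index $j$) comparison of Littlewood--Paley pieces, using the reproducing identity (\ref{eq:convolution trick}) together with Young's convolution inequality and a scaling computation on $\wt{\varphi}_j$. Throughout, the index $j \in \Z$ and the strategy is to bound $2^{\rho_1 j}\norm{\varphi_j * u}_{L^{r_1}}$ by $2^{\rho_2 j}\norm{\varphi_j * u}_{L^{r_2}}$ uniformly in $j$, after which the $\ell^s$ summation is immediate and gives the embedding (\ref{eq:besov embedding}).

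First I would rewrite $\varphi_j * u = \wt{\varphi}_j * (\varphi_j * u)$, which is valid because $\hat{\varphi}_j = \hat{\wt{\varphi}}_j\hat{\varphi}_j$ by (\ref{eq:varphi_j}). Then Young's inequality gives
\[
\norm{\varphi_j * u}_{L^{r_1}} \leq \norm{\wt{\varphi}_j}_{L^p}\norm{\varphi_j * u}_{L^{r_2}},
\]
where $p$ is chosen by $1 + 1/r_1 = 1/p + 1/r_2$, i.e.\ $1 - 1/p = 1/r_2 - 1/r_1 \geq 0$, which is admissible since $r_2 \leq r_1$. Next, the scaling identity $\wt{\varphi}_j(x) = 2^{jn}\wt{\varphi}_0(2^j x)$ (which follows from $\hat{\wt{\varphi}}_j(\xi) = \hat{\wt{\varphi}}_0(2^{-j}\xi)$ after a change of variables) yields
\[
\norm{\wt{\varphi}_j}_{L^p} = 2^{jn(1 - 1/p)}\norm{\wt{\varphi}_0}_{L^p} = C\, 2^{jn(1/r_2 - 1/r_1)}.
\]

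Combining these gives $\norm{\varphi_j * u}_{L^{r_1}} \leq C\, 2^{jn(1/r_2 - 1/r_1)}\norm{\varphi_j * u}_{L^{r_2}}$. Multiplying by $2^{\rho_1 j}$ and using the hypothesis $\rho_1 - n/r_1 = \rho_2 - n/r_2$ (so that $\rho_1 + n/r_2 - n/r_1 = \rho_2$) produces the desired pointwise estimate
\[
2^{\rho_1 j}\norm{\varphi_j * u}_{L^{r_1}} \leq C\, 2^{\rho_2 j}\norm{\varphi_j * u}_{L^{r_2}}.
\]
Taking the $\ell^s$ norm in $j$ and using the definition (\ref{eq:besov norm}) of the homogeneous Besov norms gives $\norm{u}\besn{\rho_1}{r_1}{s} \leq C\norm{u}\besn{\rho_2}{r_2}{s}$, which is (\ref{eq:besov embedding}) and establishes the continuous embedding after passing from the dense subset of $u$ with finite $\bes{\rho_2}{r_2}{s}$ norm to the completion.

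There is no serious obstacle; the only minor point to attend to is the boundary case $p = \infty$ (which occurs when $r_1 = \infty$ and $r_2 = 1$) and the case $r_1 = r_2$, in both of which Young's inequality and the scaling computation remain valid as stated, and the case $p = 1$ (when $r_1 = r_2$) where the scaling exponent $n(1/r_2 - 1/r_1)$ vanishes, matching $\rho_1 = \rho_2$.
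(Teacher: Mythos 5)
Your proof is correct and follows exactly the route the paper indicates (the sentence immediately preceding the lemma states that it "is an easy consequence of Young's inequality and the definition of homogeneous Besov spaces", and the paper otherwise just cites Bergh--L\"ofstr\"om). The reproducing identity $\varphi_j*u = \wt{\varphi}_j*\varphi_j*u$, Young's inequality with $1+1/r_1 = 1/p + 1/r_2$, and the scaling $\norm{\wt{\varphi}_j}_{L^p} = 2^{jn(1-1/p)}\norm{\wt{\varphi}_0}_{L^p}$ combine to give the pointwise-in-$j$ bound, and the $\ell^s$ summation is immediate; this is the standard Bernstein-type argument and matches the intended one.
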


The second involves homogeneous Sobelev spaces $\sob{\rho}{r}$, which may be defined in terms of Riesz potentials. Briefly, whenever $1<r<\infty$ and $\rho\in\R$, the space $\dot{H}_r^{\rho}$ coincides with the space $(-\Delta)^{-\rho/2}L^2(\R^n)$ with norm
\[\norm{u}_{\dot{H}^{\rho}}\approx\norm{(-\Delta)^{-\rho/2}u}_{L^2(\R^n)}\]
(see \cite{BL76} or \cite{hT78a} for further details). The homogeneous Besov spaces and homogeneous Sobolev spaces are related by interpolation and in particular by the continuous embeddings
\begin{equation}\label{eq:sob-bes embedding}
\bes{\rho}{r}{2}\subseteq\sob{\rho}{r}\quad\mbox{when $2\leq r<\infty$};\qquad
\bes{\rho}{r}{2}\supseteq\sob{\rho}{r}\quad\mbox{when $1<r\leq2$},
\end{equation}
whenever $\rho\in\R$. When $r=2$ it is customary to write $\dot{H}^{\rho}$ instead of $\sob{\rho}{2}$. In this case (\ref{eq:sob-bes embedding}) reduces to $\dot{H}^{\rho}=\bes{\rho}{2}{2}$.

We are now ready to apply Theorem \ref{th:KT98} to the wave equation.

\begin{corollary}\label{cor:wave strichartz} Suppose that $n\geq1$, that $\mu,\rho,\wt{\rho}\in\R$, that $q,\wt{q}\in[2,\infty]$ and that the following conditions are satisfied:
\begin{align}
q\geq2,\qquad
&\wt{q}\geq2,\notag\\
\frac{1}{q}\leq\frac{n-1}{2}\left(\frac{1}{2}-\frac{1}{r}\right),\qquad
&\frac{1}{\wt{q}}\leq\frac{n-1}{2}\left(\frac{1}{2}-\frac{1}{\wt{r}}\right),\notag\\
(q,r,n)\neq(2,\infty,3),\qquad
&(\wt{q},\wt{r},n)\neq(2,\infty,3),\notag\\
\rho+n\left(\frac{1}{2}-\frac{1}{r}\right)-\frac{1}{q}=\mu
=1&-\left(\wt{\rho}+n\left(\frac{1}{2}-\frac{1}{\wt{r}}\right)-\frac{1}{\wt{q}}\right).\label{eq:gap condition}
\end{align}
Suppose also that $f\in\dot{H}^{\mu}$, $g\in\dot{H}^{\mu-1}$ and $F\in\st{\wt{q}'}{\bes{-\wt{\rho}}{\wt{r}'}{2}}$. If $u$ is a (weak) solution to the initial value problem
\begin{equation}\label{IVP:wave}
\begin{cases}
&-u''(t)+\Delta u(t)=F(t)\\
&u(0)=f\\
&u'(0)=g
\end{cases}
\end{equation}
then
\begin{equation}\label{est:strichartz for wave}
\norm{u}\stn{q}{\bes{\rho}{r}{2}}
\lesssim\norm{f}\hsobn{\mu} + \norm{g}\hsobn{\mu-1} + \norm{F}\stn{\wt{q}'}{\bes{-\wt{\rho}}{\wt{r}'}{2}}.
\end{equation}
\end{corollary}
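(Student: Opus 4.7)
The plan is to reduce the wave equation to its half-wave propagators $e^{\pm it\sqrt{-\Delta}}$, Littlewood--Paley decompose in space, invoke Theorem \ref{th:KT98} at each dyadic frequency, and then sum over $j$ via the Besov norm structure. By Duhamel's principle, $u$ is a linear combination of terms $e^{\pm it\sqrt{-\Delta}}h$ with $h\in\{f,(-\Delta)^{-1/2}g\}$ and of the retarded integral $\int_0^t e^{\pm i(t-s)\sqrt{-\Delta}}(-\Delta)^{-1/2}F(s)\,\dd s$. Using the identification $\norm{(-\Delta)^{-1/2}g}\hsobn{\mu}\approx\norm{g}\hsobn{\mu-1}$ and the $\pm$-symmetry, it suffices to treat $T_+h(t):=e^{it\sqrt{-\Delta}}h$ and its associated retarded operator.

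Next I would frequency-localise. Since $q\geq2$ and $\wt q\geq2$, Minkowski's inequality comparing $L^p(\ell^2)$ and $\ell^2(L^p)$ applied to the definition (\ref{eq:besov norm}) gives
\[
\norm{T_+h}\stn{q}{\bes{\rho}{r}{2}}\leq\Bigl(\sum_{j\in\Z}2^{2j\rho}\norm{\varphi_j*T_+h}\stn{q}{L^r(\R^n)}^2\Bigr)^{1/2}
\]
for the output norm, and the analogous lower bound
\[
\norm{F}\stn{\wt q'}{\bes{-\wt\rho}{\wt r'}{2}}\geq\Bigl(\sum_{j\in\Z}2^{-2j\wt\rho}\norm{\varphi_j*F}\stn{\wt q'}{L^{\wt r'}(\R^n)}^2\Bigr)^{1/2}
\]
holds for the Duhamel input, the inequality reversing since $\wt q'\leq2$.

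At each $j\in\Z$, I would apply Theorem \ref{th:KT98} with $\Hi=\B_0=L^2(\R^n)$, $\B_1=L^1(\R^n)$, $\sigma=(n-1)/2$, and $U_j(t)=e^{it\sqrt{-\Delta}}\wt\varphi_j*$. The energy estimate follows from the $L^2$-unitarity of the half-wave propagator, and the dispersive estimate (\ref{est:untrunc_decay}) from the classical frequency-localised bound
\[
\norm{e^{it\sqrt{-\Delta}}\wt\varphi_j*g}_{L^\infty(\R^n)}\lesssim 2^{j(n+1)/2}|t|^{-(n-1)/2}\norm{g}_{L^1(\R^n)},
\]
derived via stationary phase; equivalently, one can reduce every $j$ to $j=0$ through the rescaling of Proposition \ref{prop:scaling}. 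The wave-admissibility hypothesis on $(q,r)$ corresponds to sharp $\sigma$-admissibility of $(q,\theta_r)$ with $\theta_r=1-2/r$, and the dual interpolation space $\B_{\theta_r}^*$ is, up to equivalent norms, the Lorentz space $L^{r,2}(\R^n)$, into which $L^r(\R^n)$ embeds continuously since $r\geq2$. Combining Theorem \ref{th:KT98}, Bernstein's inequality (to reach non-sharp $r$), and the scaling (\ref{scaling}) yields
\[
\norm{\varphi_j*T_+h}\stn{q}{L^r(\R^n)}\lesssim 2^{j[n(1/2-1/r)-1/q]}\norm{\varphi_j*h}_{L^2(\R^n)}.
\]

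The gap condition $\rho+n(1/2-1/r)-1/q=\mu$ from (\ref{eq:gap condition}) combined with the Littlewood--Paley characterisation $\norm{h}\hsobn{\mu}^2\approx\sum_j2^{2j\mu}\norm{\varphi_j*h}_{L^2(\R^n)}^2$ delivers the homogeneous part of (\ref{est:strichartz for wave}) after summation in $j$. The Duhamel term is handled by the same dyadic scheme applied to the inhomogeneous Strichartz (\ref{eq:str3}) of Theorem \ref{th:KT98} at each frequency, with the extra factor $2^{-j}$ arising from $(-\Delta)^{-1/2}\varphi_j$ compensating the scaling exponent so that the second equality in (\ref{eq:gap condition}) forces exact cancellation of all $2^j$-powers in the dyadic sum. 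The principal obstacle is the careful bookkeeping of the $2^j$-factors generated at each step---by the dispersive estimate, by the Bernstein/Lorentz embeddings, by the $(-\Delta)^{-1/2}$ factor, and by the Littlewood--Paley characterisation of $\hsob{\mu}$---and verifying that the gap condition (\ref{eq:gap condition}) indeed balances the dyadic sum.
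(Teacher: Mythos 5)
Your proposal is correct but follows a genuinely different route from the paper. The paper applies Theorem \ref{th:KT98} \emph{once}, with the interpolation couple chosen to be homogeneous Besov spaces $(\B_0,\B_1)=(\bes{0}{2}{2},\bes{(n+1)/4}{1}{2})$ and $\sigma=(n-1)/2$: the frequency-localised stationary phase bounds are first assembled into a single Besov-to-Besov dispersive estimate (estimate (\ref{est:wavetrunc}), obtained by multiplying the $j$th piece by $2^{-j(n+1)/4}$ on the left and $2^{j(n+1)/4}$ on the right and taking $\ell^2$), and the Besov norms in the conclusion emerge from the identification $\bes{\gamma}{r'}{2}\subseteq(\B_0,\B_1)_{\theta,2}$ in (\ref{eq:real interpolation of besov}). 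Your argument instead fixes a dyadic frequency $j$, applies Theorem \ref{th:KT98} with the Lebesgue couple $(L^2(\R^n),L^1(\R^n))$ to the $j$-localised propagator $e^{it\sqrt{-\Delta}}\wt\varphi_j*$, extracts the factor $2^{j[n(1/2-1/r)-1/q]}$ by scaling, and then reassembles the Besov estimate via Minkowski's inequality, which is applicable precisely because $q\geq2$ for the output and $\wt q'\leq2$ for the input; the gap condition (\ref{eq:gap condition}) then makes the $2^j$-powers cancel exactly so the $\ell^2$ dyadic sum closes. This is the classical Ginibre--Velo strategy: it is self-contained and makes the cancellation of scaling exponents completely explicit, at the cost of carrying the $j$-bookkeeping through every stage. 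The paper's route is shorter and showcases the flexibility of the abstract theorem (the interpolation couple need not consist of Lebesgue spaces), effectively delegating the $j$-sum to the definition of the Besov dispersive estimate. The one place your write-up would benefit from tightening is the appeal to Proposition \ref{prop:scaling} for the reduction to $j=0$: that proposition rescales \emph{time} and the Banach norm on $\B_1$, not space, so to implement it you should absorb the constant $2^{j(n+1)/2}$ from the localised dispersive estimate into $\norm{\cdot}_{\B_1}$ and track its effect on $\norm{\cdot}_{\Bth}$ via the lemma on real interpolation of rescaled spaces in Section \ref{s:preliminaries}; alternatively, perform the spatial dilation directly, noting that for the half-wave propagator a spatial rescaling by $2^{-j}$ sends frequency $2^j$ to frequency $1$ while inducing the time rescaling $t\mapsto 2^jt$.
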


When $n>3$, the darker closed region of Figure \ref{fig:wave-acceptable region} (a) represents the range of exponent pairs $(q,r)$ and $(\wt{q},\wt{r})$ such that the Strichartz estimate (\ref{est:strichartz for wave}) is valid.

\begin{remark}
Corollary \ref{cor:wave strichartz} implies Strichartz estimates for spaces more familiar than the Besov spaces. By Besov--Sobolev embedding, estimate (\ref{est:strichartz for wave}) still holds when $\bes{\rho}{r}{2}$ is replaced everywhere by $\sob{\rho}{r}$ under the additional assumption that $r<\infty$ and $\wt{r}<\infty$. In fact, using Sobolev embedding, one can deduce that
\[\norm{u}\stn{q}{L^r(\R^n)}
\lesssim\norm{f}\hsobn{\mu}+\norm{g}\hsobn{\mu-1}+\norm{F}\stn{\wt{q}'}{L^{\wt{r}'}(\R^n)}\]
under the additional assumption that $r<\infty$ and $\wt{r}<\infty$. One may also replace the infinite interval $\R$ by any finite time interval $[0,\tau]$ where $\tau>0$.  See \cite[Corollary 1.3]{KT98} and \cite{GV95} for these variations.
\end{remark}

We begin with a heuristic argument to indicate how Theorem \ref{th:KT98} will be applied in this setting.
For convenience, write $\omega$ for the operator $(-\Delta)^{1/2}$. The homogeneous problem may be written as
\[v''(t)+\omega^2v(t)=0,\qquad v(0)=f,\qquad v'(0)=g,\]
with solution $v$ is given by
\[v(t)=\cos(\omega t)h_1+\sin(\omega t)h_2\]
for some functions $h_1$ and $h_2$ determined by imposing initial conditions. Hence
\[v(t)=\cos(\omega t)f+\omega^{-1}\sin(\omega t)g.\]
The inhomogeneous problem
\[-w''(t)+\Delta w(t)=F(t),\qquad w(0)=0,\qquad w'(0)=0\]
may be solved by Duhamel's principle to give
\[w(t)=\int_{s<t}\omega^{-1}\sin\big(\omega(t-s)\big)F(s)\,\dd s.\]
Define $\{U(t):t\in\R\}$ by $U(t)=e^{i\omega t}$. Then the solution $u$ to problem (\ref{IVP:wave}) can be written as
\begin{align}
u(t)
&=v(t)+w(t)\notag\\
&=\frac{1}{2}\big(U(t)+U(-t)\big)f+\omega^{-1}\frac{1}{2i}\big(U(t)-U(-t)\big)g\notag\\
&\quad+\int_{s<t}\omega^{-1}\frac{1}{2i}\big(U(t)U(s)^*-U(-t)U(-s)^*\big)F(s)\,\dd s\label{eq:wave decomposition}
\end{align}
and it is clear that if we have appropriate Strichartz estimates for the group $\{U(t):t\in\R\}$ then (\ref{est:strichartz for wave}) will follow. Hence define the operator $T$ by $Tf(t)=U(t)f$, whenever $f$ belongs to the Hilbert space $\bes{0}{2}{2}$.

\begin{lemma} Suppose that $n\geq1$ and that the triples $(q,r,\gamma)$ and $(\wt{q},\wt{r},\wt{\gamma})$ satisfy the conditions
\begin{align}
q\geq2,\qquad
&\wt{q}\geq2,\label{cond:wave1a}\\
\frac{1}{q}=\frac{n-1}{2}\left(\frac{1}{2}-\frac{1}{r}\right),\qquad
&\frac{1}{\wt{q}}=\frac{n-1}{2}\left(\frac{1}{2}-\frac{1}{\wt{r}}\right),\label{cond:wave1b}\\
\gamma=\frac{n+1}{2}\left(\frac{1}{2}-\frac{1}{r}\right),\qquad
&\wt{\gamma}=\frac{n+1}{2}\left(\frac{1}{2}-\frac{1}{\wt{r}}\right),\label{cond:wave1c}\\
(q,r,n)\neq(2,\infty,3),\qquad
&(\wt{q},\wt{r},n)\neq(2,\infty,3).\label{cond:wave1d}
\end{align}
Then the operator $T$ satisfies the Strichartz estimates
\begin{equation}\label{est:intermediate wave strichartz hom}
\norm{Tf}\stn{q}{\bes{-\gamma}{r}{2}}\lesssim\norm{f}\besn{0}{2}{2}\qquad\forall f\in\bes{0}{2}{2}
\end{equation}
and
\begin{equation}\label{est:intermediate wave strichartz inhom}
\norm{(TT^*)_RF}\stn{q}{\bes{-\gamma}{r}{2}}\lesssim\norm{F}\stn{\wt{q}'}{\bes{\wt{\gamma}}{\wt{r}'}{2}}
\qquad\forall F\in\st{\wt{q}'}{\bes{\wt{\gamma}}{\wt{r}'}{2}}.
\end{equation}
\end{lemma}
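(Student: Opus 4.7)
The plan is to apply the Keel--Tao theorem \ref{th:KT98} to the evolution $U(t) = e^{i\omega t}$ on $\Hi = L^2(\R^n)$ with interpolation couple $(\B_0, \B_1) = (L^2(\R^n), \bes{(n+1)/4}{1}{2})$ and decay exponent $\sigma = (n-1)/2$. This choice of $\B_1$ is engineered so that the resulting real interpolation spaces match (up to a Lorentz adjustment) the homogeneous Besov spaces appearing in the statement, delivering the estimates without any dyadic bookkeeping beyond what is already encoded in the norms.

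First I would verify the two abstract hypotheses. The energy estimate $\norm{e^{i\omega t}f}_{L^2} = \norm{f}_{L^2}$ follows from Plancherel, since $e^{i\omega t}$ is a Fourier multiplier of modulus one. For (\ref{est:untrunc_decay}) I would start from the classical frequency-localized wave dispersion
\[\norm{e^{i\omega t}\varphi_j*g}_{L^\infty} \lesssim 2^{j(n+1)/2}|t|^{-(n-1)/2}\norm{\varphi_j*g}_{L^1},\]
obtained by stationary phase against a bump in the unit annulus followed by parabolic rescaling. By Besov duality $\B_1^* = \bes{-(n+1)/4}{\infty}{2}$; since $e^{i\omega t}$ commutes with each Littlewood--Paley projector $\varphi_j*$, squaring the frequency-localized bound and summing in $\ell^2_j$ produces
\[\norm{e^{i\omega(t-s)}g}_{\B_1^*}^2 \lesssim \sum_{j\in\Z}2^{-j(n+1)/2}\cdot 2^{j(n+1)}|t-s|^{-(n-1)}\norm{\varphi_j*g}_{L^1}^2 = |t-s|^{-(n-1)}\norm{g}_{\B_1}^2,\]
which is exactly (\ref{est:untrunc_decay}) with the required exponent $\sigma$.

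Once both hypotheses are in place, Theorem \ref{th:KT98} immediately gives
\[\norm{Tf}\stn{q}{\Bth^*} \lesssim \norm{f}_{L^2}, \qquad \norm{(TT^*)_R F}\stn{q}{\Bth^*} \lesssim \norm{F}\stn{\wt{q}'}{\B_{\wt{\theta}}}\]
for all sharp $\sigma$-admissible pairs, with the Keel--Tao exclusion $(q,\theta,\sigma)=(2,1,1)$ corresponding precisely to $(q,r,n)=(2,\infty,3)$ here. By Lemma \ref{lem:besov interpolation} applied with the identification $L^2 = \bes{0}{2}{2}$, the space $\Bth = (L^2, \bes{(n+1)/4}{1}{2})_{\theta,2}$ is the Lorentz--Besov space with regularity $\gamma = \theta(n+1)/4$ and spatial exponent $L^{r',2}$, where $1/r' = (1+\theta)/2$; a direct check shows that sharp admissibility $1/q = \sigma\theta/2$ is equivalent to (\ref{cond:wave1b}) and that $\gamma$ matches (\ref{cond:wave1c}). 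The Lorentz inclusions $L^{r,2} \subseteq L^r$ (for $r \geq 2$) and $L^{\wt{r}'} \subseteq L^{\wt{r}',2}$ (for $\wt{r}' \leq 2$), combined with the Besov embedding (\ref{eq:real interpolation of besov}), then convert the Keel--Tao output into the target estimates (\ref{est:intermediate wave strichartz hom}) and (\ref{est:intermediate wave strichartz inhom}).

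The main obstacle is the Besov--Lorentz bookkeeping: identifying $\Bth$ (and by duality $\Bth^*$) as an explicit Lorentz-indexed Besov space via Lemma \ref{lem:besov interpolation}, and verifying that each Lorentz-to-Lebesgue inclusion points in the correct direction at the exponents forced by sharp admissibility. Once this identification is carried out, the rest is just a direct invocation of Theorem \ref{th:KT98}.
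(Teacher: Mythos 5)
Your proposal is correct and follows essentially the same route as the paper: both establish the energy estimate via Plancherel/unitarity, derive the dispersive estimate by stationary phase on the unit annulus plus parabolic rescaling and $\ell^2$-summation into the $\bes{-(n+1)/4}{\infty}{2}$--$\bes{(n+1)/4}{1}{2}$ duality pair, then feed these into Theorem~\ref{th:KT98} with $\sigma=(n-1)/2$ and unwind the real interpolation spaces via the Lorentz--Besov identification. The only cosmetic differences from the paper's write-up are that you take $\B_0=L^2$ rather than the identical $\bes{0}{2}{2}$, and you fold the reproducing identity $\varphi_j*=\wt\varphi_j*\varphi_j*$ into your statement of the frequency-localized dispersive bound instead of spelling it out; neither affects the argument.
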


\begin{proof} We begin with the stationary phase estimate
\[\sup_{x\in\R^n}\left|\int_{\xi\in\R^n}\exp(it|\xi|+i\ip<x,\xi>)\hat{\varphi}_0(\xi)\,\dd\xi\right|
\leq C|t|^{-(n-1)/2},\]
where $C$ is a positive constant (see, for example, \cite[Section 7.7]{lH83}). For $j$ in $\Z$, apply the scaling $\xi\leftarrow 2^{-j}\xi$, $x\leftarrow 2^{j}x$, $t\leftarrow 2^{j}t$ to obtain
\[\sup_{x\in\R^n}2^{-jn}\left|\int_{\xi\in\R^n}\exp(it|\xi|+i\ip<x,\xi>)\hat{\varphi}_j(\xi)\,\dd\xi\right|\\
\leq C|t|^{-(n-1)/2}2^{-j(n-1)/2}.\]
The above estimate may be rewritten as
\[\norm{U(t)\varphi_j}_{L^{\infty}(\R^n)}\lesssim|t|^{-(n-1)/2}2^{j(n+1)/2}.\]
If $f$ is a sufficiently regular function (or distribution) in the spatial variable then
\begin{align}
\norm{\varphi_j*U(t)f}_{L^{\infty}(\R^n)}
&=\norm{\varphi_j*U(t)\wt{\varphi}_j*f}_{L^{\infty}(\R^n)}\notag\\
&\leq\norm{U(t)\varphi_j}_{L^{\infty}(\R^n)}\norm{\wt{\varphi}_j*f}_{L^1(\R^n)}\notag\\
&\lesssim|t|^{-(n-1)/2}2^{j(n+1)/2}\norm{\wt{\varphi}_j*f}_{L^1(\R^n)},\label{est:wavetrunc1}
\end{align}
by (\ref{eq:convolution trick}) and Young's inequality. Multiplying by $2^{j(n+1)/4}$ gives
\[\norm{2^{-j/2}\varphi_j*U(t)f}_{L^{\infty}(\R^n)}\lesssim|t|^{-(n-1)/2}\norm{2^{jn/2}\wt{\varphi}_j*f}_{L^1(\R^n)},\]
where the left- and right-hand sides define the $j$th term of two sequences. If we take the $\ell^2$ norm of each sequence and apply (\ref{eq:varphi_j}), then the above inequality yields
\begin{equation}\label{est:wavetrunc}
\norm{U(t)f}\besn{-(n+1)/4}{\infty}{2}\lesssim |t|^{-(n-1)/2}\norm{f}\besn{(n+1)/4}{1}{2}\qquad\forall f\in\bes{(n+1)/4}{1}{2}.
\end{equation}
This corresponds to the abstract dispersive estimate (\ref{est:untrunc_decay}).

On the other hand, each $U(t)$ is an isometry on the homogeneous Sobolev space $\hsob{0}$ and hence we have the energy estimate
\[\norm{U(t)f}\besn{0}{2}{2}\lesssim\norm{f}\besn{0}{2}{2}\qquad\forall f\in\bes{0}{2}{2},\]
by (\ref{eq:sob-bes embedding}). If $\Hi=\B_0=\bes{0}{2}{2}$ and $\B_1=\bes{(n+1)/4}{1}{2}$ then
\[\bes{\gamma}{r'}{2}\subseteq\Bth=(\B_0,\B_1)_{\theta,2}\]
by (\ref{eq:real interpolation of besov}), where $1/r'=(1-\theta)/2+\theta$ and $\gamma=(n+1)\theta/4$. It is not hard to show from here that Theorem \ref{th:KT98} proves the lemma.
\end{proof}

\begin{proof}[Proof of Corollary \ref{cor:wave strichartz}] It is well known that if $\mu\in\R$, then $\omega^{\mu}$ is an isomorphism from $\bes{\gamma}{r}{2}$ to $\bes{\gamma-\mu}{r}{2}$. Hence replacing $f$ with $\omega^{\mu}f$ in (\ref{est:intermediate wave strichartz hom}) gives
\[\norm{Tf}\stn{q}{\bes{-\gamma+\mu}{r}{2}}\lesssim\norm{f}\besn{\mu}{2}{2}\qquad\forall f\in\bes{\mu}{2}{2}.\]
The same trick applied to (\ref{est:intermediate wave strichartz inhom}) yields
\[\norm{(TT^*)_RF}\stn{q}{\bes{-\gamma+\mu}{r}{2}}\lesssim\norm{F}\stn{\wt{q}'}{\bes{\wt{\gamma}+\mu}{\wt{r}'}{2}}
\qquad\forall F\in\st{\wt{q}'}{\bes{\wt{\gamma}+\mu}{\wt{r}'}{2}}.\]
If $\rho=-\gamma+\mu$ then these estimates combine with (\ref{eq:wave decomposition}) to give
\begin{align*}
\norm{u}\stn{q}{\bes{\rho}{r}{2}}
&\lesssim\norm{Tf}\stn{q}{\bes{\rho}{r}{2}}+\norm{\omega^{-1}Tg}\stn{q}{\bes{\rho}{r}{2}}
+\norm{\omega^{-1}(TT^*)_RF}\stn{q}{\bes{\rho}{r}{2}}\\
&\lesssim\norm{f}\besn{\mu}{2}{2}+\norm{\omega^{-1}g}\besn{\mu}{2}{2}
+\norm{\omega^{-1}F}\stn{\wt{q}'}{\bes{\wt{\gamma}+\mu}{\wt{r}'}{2}}\\
&\lesssim\norm{f}\besn{\mu}{2}{2}+\norm{g}\besn{\mu-1}{2}{2}
+\norm{F}\stn{\wt{q}'}{\bes{\wt{\gamma}+\mu-1}{\wt{r}'}{2}}.
\end{align*}
If $\wt{\rho}=-(\wt{\gamma}+\mu-1)$ then the estimate above becomes
\begin{equation}\label{est:wave strichartz proof}
\norm{u}\stn{q}{\bes{\rho}{r}{2}}
\lesssim\norm{f}\hsobn{\mu} + \norm{g}\hsobn{\mu-1} + \norm{F}\stn{\wt{q}'}{\bes{-\wt{\rho}}{\wt{r}'}{2}}.
\end{equation}
So far we have imposed the conditions $\mu\in\R$, (\ref{cond:wave1a}), (\ref{cond:wave1b}), (\ref{cond:wave1d}) and
\[\rho+\frac{n+1}{2}\left(\frac{1}{2}-\frac{1}{r}\right)=\mu=1-\wt{\rho}-\frac{n+1}{2}\left(\frac{1}{2}-\frac{1}{\wt{r}}\right).\]
This last condition may be rewritten as
\[\rho+n\left(\frac{1}{2}-\frac{1}{r}\right)-\frac{1}{q}=\mu
=1-\wt{\rho}-n\left(\frac{1}{2}-\frac{1}{\wt{r}}\right)+\frac{1}{\wt{q}}.\]
Now if $r_1\geq r$ and $\rho-n/r=\rho_1-n/r_1$, then
\[\norm{u}\stn{q}{\bes{\rho_1}{r_1}{2}}\leq C\norm{u}\stn{q}{\bes{\rho}{r}{2}}\]
by Lemma \ref{lem:besov embedding}.
Similarly, if $\wt{r}_1\geq \wt{r}$ and $\wt{\rho}-n/\wt{r}=\wt{\rho}_1-n/\wt{r}_1$, then
\[\norm{F}\stn{\wt{q}'}{\bes{-\wt{\rho}}{\wt{r}'}{2}}\leq C\norm{F}\stn{\wt{q}'}{\bes{\wt{\rho}_1}{\wt{r}'_1}{2}}.\]
Applying these estimates to (\ref{est:wave strichartz proof}) gives
\begin{equation}\label{est:wave strichartz proof1}
\norm{u}\stn{q}{\bes{\rho_1}{r_1}{2}}
\lesssim\norm{f}\hsobn{\mu} + \norm{g}\hsobn{\mu-1} + \norm{F}\stn{\wt{q}'}{\bes{-\wt{\rho}_1}{\wt{r}'_1}{2}}
\end{equation}
whenever the conditions
\begin{align*}
q\geq2,\qquad
&\wt{q}\geq2,\\
\frac{1}{q}\leq\frac{n-1}{2}\left(\frac{1}{2}-\frac{1}{r_1}\right),\qquad
&\frac{1}{\wt{q}}\leq\frac{n-1}{2}\left(\frac{1}{2}-\frac{1}{\wt{r}_1}\right),\\
(q,r_1,n)\neq(2,\infty,3),\qquad
&(\wt{q},\wt{r}_1,n)\neq(2,\infty,3),\\
\rho_1+n\left(\frac{1}{2}-\frac{1}{r_1}\right)-\frac{1}{q}=\mu
=1&-\wt{\rho}_1-n\left(\frac{1}{2}-\frac{1}{\wt{r}_1}\right)+\frac{1}{\wt{q}}
\end{align*}
are satisfied. These conditions and the Strichartz estimate (\ref{est:wave strichartz proof1}) coincide with those in the statement of Corollary \ref{cor:wave strichartz}.
\end{proof}

\begin{remark} One can see from (\ref{eq:wave decomposition}) that the derivative $u'$ can also be expressed in terms of $T$, $(TT^*)_R$ and $\omega$. Thus we have the Strichartz estimate
\[\norm{u'}\stn{q}{\bes{\rho-1}{r}{2}}
\lesssim\norm{f}\hsobn{\mu} + \norm{g}\hsobn{\mu-1} + \norm{F}\stn{\wt{q}'}{\bes{-\wt{\rho}}{\wt{r}'}{2}}\]
whenever the exponents satisfy the conditions of Corollary \ref{cor:wave strichartz}.
\end{remark}

We now consider the inhomogeneous wave equation with zero initial data. Suppose that $n$ is a positive integer. We say that a pair $(q,r)$ of Lebesgue exponents are \textit{wave n-acceptable} if either
\[1\leq q<\infty,\qquad 2\leq r\leq\infty,\qquad\frac{1}{q}<(n-1)\left(\frac{1}{2}-\frac{1}{r}\right)\]
or $(q,r)=(\infty,2)$.

\begin{corollary}\label{cor:inhomogeneous strichartz for wave}
Suppose that $n$ is a positive integer and that the exponent pairs $(q,r_1)$ and $(\wt{q},\wt{r}_1)$ are wave $n$-acceptable, satisfy the scaling condition
\[\frac{1}{q}+\frac{1}{\wt{q}}=\frac{n-1}{2}\left(1-\frac{1}{r_1}-\frac{1}{\wt{r}_1}\right)\]
and the conditions
\[\frac{1}{q}+\frac{1}{\wt{q}}<1,\qquad\frac{n-3}{r_1}\leq\frac{n-1}{\wt{r}_1},\qquad\frac{n-3}{\wt{r}_1}\leq\frac{n-1}{r_1}\]
When $n=3$ we also require that $r_1<\infty$ and $\wt{r}_1<\infty$. If $r\geq r_1$, $\wt{r}\geq\wt{r}_1$, $\rho\in\R$,
\[\rho+n\left(\frac{1}{2}-\frac{1}{r}\right)-\frac{1}{q}
=1-\left(\wt{\rho}+n\left(\frac{1}{2}-\frac{1}{\wt{r}}\right)-\frac{1}{\wt{q}}\right),\]
$F\in\st{\wt{q}'}{\bes{-\wt{\rho}}{\wt{r}'}{2}}$ and $u$ is a weak solution of the inhomogeneous wave equation
\[-u''(t)+\Delta u(t)=F(t),\qquad u(0)=0,\qquad u'(0)=0,\]
then
\begin{equation}\label{est:inhomogeneous strichartz for wave}
\norm{u}\stn{q}{\bes{\rho}{r}{2}}\lesssim\norm{F}\stn{\wt{q}'}{\bes{-\wt{\rho}}{\wt{r}'}{2}}.
\end{equation}
\end{corollary}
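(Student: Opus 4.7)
The plan is to apply Theorem \ref{th:inhomogeneous}(i) to the wave propagator $U(t) = e^{i\omega t}$, where $\omega = (-\Delta)^{1/2}$, with parameters $\sigma = (n-1)/2$, $\Hi = \B_0 = \bes{0}{2}{2}$ and $\B_1 = \bes{(n+1)/4}{1}{2}$; the energy and dispersive hypotheses for this choice were already verified in the proof of Corollary \ref{cor:wave strichartz}. Duhamel's principle expresses the solution with vanishing Cauchy data as
\[u(t) = \int_{s<t}\omega^{-1}\sin(\omega(t-s))F(s)\,\dd s = \frac{1}{2i}\omega^{-1}\bigl((TT^*)_R F - (T_{-}T_{-}^*)_R F\bigr)(t),\]
where $T_{-}$ is built from the time-reversed group $\{e^{-i\omega t}\}$. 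Since that family satisfies the same hypotheses, any estimate for $(TT^*)_R$ applies equally to $(T_{-}T_{-}^*)_R$, and it suffices to bound the retarded piece.

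Parametrise by $\theta = 2(\tfrac{1}{2} - \tfrac{1}{r_1})$ and $\wt{\theta} = 2(\tfrac{1}{2} - \tfrac{1}{\wt{r}_1})$, so that $\theta,\wt{\theta} \in [0,1]$. A direct translation shows that wave $n$-acceptability of $(q, r_1)$ becomes $\sigma$-acceptability of $(q, \theta)$; the scaling hypothesis of the corollary becomes (\ref{eq:inhomogeneous scaling condition}); the inequalities $(n-3)/r_1 \leq (n-1)/\wt{r}_1$ and its mirror become (\ref{eq:nonsharp theta}); and the restriction $r_1, \wt{r}_1 < \infty$ when $n=3$ corresponds to $\theta, \wt{\theta} < 1$ when $\sigma = 1$. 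Theorem \ref{th:inhomogeneous}(i) therefore gives
\[\norm{(TT^*)_R F}\stn{q}{\B_{\theta,2}^*} \lesssim \norm{F}\stn{\wt{q}'}{\B_{\wt{\theta},2}}.\]
The real-interpolation embedding (\ref{eq:real interpolation of besov}) yields $\bes{\gamma}{r_1'}{2} \subseteq \B_{\theta,2}$ with $\gamma = \tfrac{n+1}{2}(\tfrac{1}{2} - \tfrac{1}{r_1})$, together with the analogue for the tilded variables. Dualising gives $\B_{\theta,2}^* \subseteq \bes{-\gamma}{r_1}{2}$, so that
\[\norm{(TT^*)_R F}\stn{q}{\bes{-\gamma}{r_1}{2}} \lesssim \norm{F}\stn{\wt{q}'}{\bes{\wt{\gamma}}{\wt{r}_1'}{2}}.\]

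Since $\omega^{-1}$ is an isomorphism of $\bes{\beta}{r}{2}$ onto $\bes{\beta+1}{r}{2}$, premultiplying by $\omega^{-1}$ shifts the target regularity to $1 - \gamma$; this establishes the desired estimate in the special case $r = r_1$, $\wt{r} = \wt{r}_1$, $\rho = 1-\gamma$, $\wt{\rho} = \wt{\gamma}$, and a short algebraic computation using the scaling identity confirms that these values satisfy the gap condition of the corollary. For arbitrary $r \geq r_1$ and $\wt{r} \geq \wt{r}_1$, Lemma \ref{lem:besov embedding} provides the further embeddings $\bes{1-\gamma}{r_1}{2} \subseteq \bes{\rho}{r}{2}$ and $\bes{-\wt{\rho}}{\wt{r}'}{2} \subseteq \bes{\wt{\gamma}}{\wt{r}_1'}{2}$ whenever the appropriate scaling relations hold; these two relations turn out to be equivalent to the single gap condition stated in the corollary. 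The only real obstacle is bookkeeping — matching the various exponent relations as the parametrisation by $(\theta, \wt{\theta})$ is translated back to Besov indices; no further analytic ingredient beyond Theorem \ref{th:inhomogeneous} is required.
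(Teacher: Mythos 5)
Your approach is essentially the paper's: apply Theorem~\ref{th:inhomogeneous}(i) with $U(t)=e^{i\omega t}$, $\sigma=(n-1)/2$, $(\B_0,\B_1)=(\bes{0}{2}{2},\bes{(n+1)/4}{1}{2})$, interpret the interpolation space via (\ref{eq:real interpolation of besov}), and then push to larger $r,\wt{r}$ via Lemma~\ref{lem:besov embedding}. Two issues, one small and one substantive.

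First, a sign slip: the intermediate estimate reads $\norm{(TT^*)_R F}\stn{q}{\bes{-\gamma}{r_1}{2}}\lesssim\norm{F}\stn{\wt{q}'}{\bes{\wt{\gamma}}{\wt{r}_1'}{2}}$; matching the right side with $\norm{F}\stn{\wt{q}'}{\bes{-\wt{\rho}}{\wt{r}_1'}{2}}$ forces $\wt{\rho}=-\wt{\gamma}$, not $\wt{\rho}=\wt{\gamma}$. With $\wt{\rho}=\wt{\gamma}$ the gap condition does \emph{not} follow from the scaling condition (it would require $(3n+1)/2=(n-1)/2$); with $\wt{\rho}=-\wt{\gamma}$ it does. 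So the ``short algebraic computation'' as you have stated it would actually refute the claimed identification.

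Second, and more importantly, the assertion that the two Besov-embedding relations ``turn out to be equivalent to the single gap condition'' is false, and this gap loses part of the corollary. For fixed $(q,\wt{q},r_1,\wt{r}_1,r,\wt{r})$ the corollary allows an arbitrary $\rho\in\R$, with $\wt{\rho}$ then determined by the one linear gap equation -- a one-parameter family. Your construction pins down a single pair $(\rho,\wt{\rho})$: Lemma~\ref{lem:besov embedding} gives $\rho=1-\gamma-n/r_1+n/r$ and $-\wt{\rho}=\wt{\gamma}-n/\wt{r}_1'+n/\wt{r}'$, which are two equations, strictly stronger than the single gap condition (they imply it, but not conversely). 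The missing degree of freedom comes from conjugating by $\omega^{\mu}$ for \emph{arbitrary} $\mu\in\R$, not just $\mu=1$: since $\omega^{\mu}$ commutes with $(TT^*)_R$ and is an isomorphism $\bes{\beta}{r}{2}\to\bes{\beta-\mu}{r}{2}$, replacing $F$ by $\omega^{\mu-1}F$ throughout shifts the pair to $\rho=-\gamma+\mu$, $\wt{\rho}=1-\mu-\wt{\gamma}$, sweeping out the full one-parameter family. This is exactly the device used in the paper's proof of Corollary~\ref{cor:wave strichartz}, which the proof of Corollary~\ref{cor:inhomogeneous strichartz for wave} quietly reuses (``In light of the work done to prove Corollary~\ref{cor:wave strichartz} \ldots''). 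Once you reinstate the free parameter $\mu$, the rest of your argument -- the translation $\theta=2(1/2-1/r_1)$, the verification that wave $n$-acceptability, the scaling condition, and the $(n-3)/r_1\leq(n-1)/\wt{r}_1$ inequalities line up with $\sigma$-acceptability, (\ref{eq:inhomogeneous scaling condition}), and (\ref{eq:nonsharp theta}), and the handling of the time-reversed piece -- is correct and matches the paper.
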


\begin{figure}

\centering
\subfigure[]
{
\begin{pspicture}(-.3,-.3)(4.5,4.5)
	\psset{unit=4cm}
	% main lines and shading
	\newgray{gray1}{.9}
	\newgray{gray2}{.75}
	\pspolygon[linewidth=1pt, fillcolor=gray2, fillstyle=solid](0,0)(.5,0)(.3,.5)(0,.5)
	\pspolygon[linecolor=white, fillcolor=gray1, fillstyle=solid](.5,0)(.3,.5)(0,.5)(0,1)(.3,1)
	%\psdots[dotstyle=*](0,0)(.5,0)(.3,.5)(0,.5)
	\psline[linestyle=dashed, linewidth=1pt]{*-o}(.5,0)(.3,1)
	\psline[linewidth=1pt]{-o}(0,1)(.3,1)
	\psline[linewidth=1pt](0,0)(0,1)
	\psline[linewidth=1pt](0,.5)(.3,.5)
	\psline[linewidth=1pt](.5,0)(.3,.5)
	\psline[linestyle=dotted, linewidth=.4pt]{-o}(.3,0)(.3,1)
	% axes
	\psline[linewidth=.4pt]{->}(0,0)(1.2,0)
	\psline[linewidth=.4pt]{->}(0,0)(0,1.2)
	% labels and scale
	\psdots[dotstyle=|](1,0)
	\uput[225](0,0){$0$}
	\uput[260](.3,0){$\frac{n-3}{2(n-1)}$}
	\uput[280](.5,0){$\frac{1}{2}$}
	\uput[d](1.2,0){$\frac{1}{r}$}
	\uput[d](1,0){$1$}
	\uput[l](0,1.2){$\frac{1}{q}$}
	\uput[l](0,1){$1$}
	\uput[l](0,.5){$\frac{1}{2}$}
	% endpoint
	\psdots(.3,.5)
	\uput[dl](.3,.5){$Q$}
\end{pspicture}
}
\hspace*{1cm}
\subfigure[]
{
\begin{pspicture}(-.3,-.3)(4.5,4.5)
	\psset{unit=8cm}
	% main lines and shading
	\newgray{gray1}{.85}
	\pspolygon[linecolor=white, fillcolor=gray1, fillstyle=solid](.5,.3)(.5,.5)(.3,.5)(.225,.375)(.375,.225)
	\psline[linestyle=dashed, linewidth=1pt]{o-o}(.225,.375)(.375,.225)
	\psline[linewidth=1pt]{o-}(.375,.225)(.5,.3)
	\psline[linewidth=1pt](.5,.3)(.5,.5)
	\psline[linewidth=1pt](.5,.5)(.3,.5)
	\psline[linewidth=1pt]{-o}(.3,.5)(.225,.375)
	%construction lines
	\psline[linestyle=dotted, linewidth=.4pt]{-o}(0,0)(.225,.375)
	\psline[linestyle=dotted, linewidth=.4pt]{-o}(0,0)(.375,.225)
	\psline[linestyle=dotted, linewidth=.4pt](.3,0)(.3,.5)
	\psline[linestyle=dotted, linewidth=.4pt](0,.3)(.5,.3)
	\psline[linestyle=dotted, linewidth=.4pt](0,.5)(.3,.5)
	\psline[linestyle=dotted, linewidth=.4pt](.5,0)(.5,.3)
	%\psdots[dotstyle=*](0,0)(.5,0)(.3,.5)(0,.5)
	% axes
	\psline[linewidth=.4pt]{->}(0,0)(.6,0)
	\psline[linewidth=.4pt]{->}(0,0)(0,.6)
	% labels and scale
	\uput[200](0,.5){$\frac{1}{2}$}
	\uput[225](0,0){$0$}
	\uput[d](.3,0){$\frac{n-3}{2(n-1)}$}
	\uput[l](0,.3){$\frac{n-3}{2(n-1)}$}
	\uput[d](.5,0){$\frac{1}{2}$}
	\uput[d](.6,0){$\frac{1}{r_1}$}
	\uput[180](0,.6){$\frac{1}{\wt{r}_1}$}
	\uput[150](.225,.375){$C$}
	\uput[300](.375,.225){$D$}
\end{pspicture}
}
\caption{Range of exponents for Corollary \ref{cor:inhomogeneous strichartz for wave} when $n>3$.}
\label{fig:wave-acceptable region}
\end{figure}
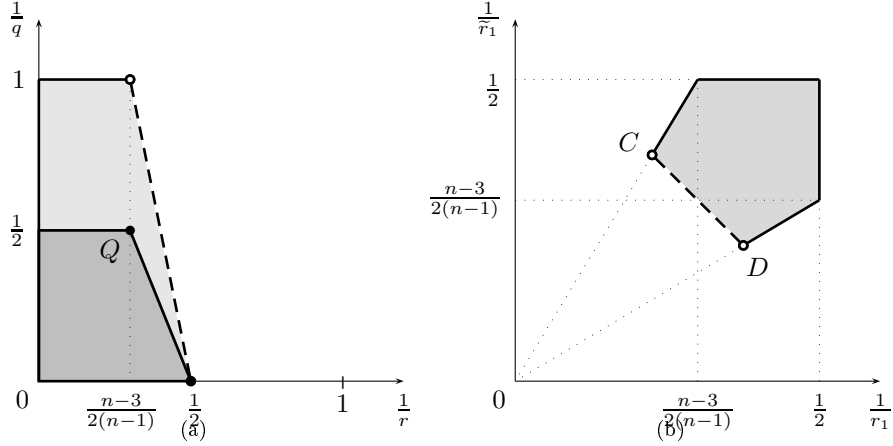

Figure \ref{fig:wave-acceptable region} shows the range for various exponents appearing in Corollary \ref{cor:inhomogeneous strichartz for wave}. In the first diagram, the dark region represents the range for the homogeneous Strichartz estimate while the union of light and dark regions represents the range for the inhomogeneous Strichartz estimate. In the second diagram, the coordinates of $C$ and $D$ are given by
\[
\left(\frac{(n-3)^2}{2(n-2)(n-1)},\frac{n-3}{2(n-2)}\right)
\quad\mbox{and}\quad
\left(\frac{n-3}{2(n-2)},\frac{(n-3)^2}{2(n-2)(n-1)}\right)
\]
respectively.

\begin{proof}
In light of the work done to prove Corollary \ref{cor:wave strichartz}, the case when $r=r_1$ and $\wt{r}=\wt{r}_1$ is a simple application of Theorem \ref{th:inhomogeneous} (i) when $(\B_0,\B_1)=(\bes{0}{2}{2},\bes{(n+1)/4}{1}{2})$ and $\sigma=(n-1)/2$. The case when $r>r_1$ and $\wt{r}>\wt{r}_1$ is obtained using the Besov embedding result of Lemma \ref{lem:besov embedding}.
\end{proof}

The case when $1/q+1/\wt{q}=1$ cannot be simply integrated into the above result via Besov embedding, except when $q=\wt{q}=2$ (which corresponds to a sharp admissible estimate obtained earlier from Theorem \ref{th:KT98}). We therefore state this case separately, using the notation $a\vee b$ and $a\wedge b$ for $\max\{a,b\}$ and $\min\{a,b\}$ respectively.

\begin{corollary}\label{cor:inhomogeneous strichartz for wave sharp}
Suppose that $n$ is a positive integer not equal to $3$ and that the exponent pairs $(q,r_1)$ and $(\wt{q},\wt{r}_1)$ are wave $n$-acceptable, satisfy the scaling condition
\[\frac{1}{q}+\frac{1}{\wt{q}}=\frac{n-1}{2}\left(1-\frac{1}{r_1}-\frac{1}{\wt{r}_1}\right)\]
and the conditions
\[\frac{1}{q}+\frac{1}{\wt{q}}=1,\qquad\frac{n-3}{r_1}<\frac{n-1}{\wt{r}_1},\qquad\frac{n-3}{\wt{r}_1}< \frac{n-1}{r_1},
\qquad\frac{1}{r_1}\leq\frac{1}{q},\qquad\frac{1}{\wt{r}_1}\leq\frac{1}{\wt{q}}.\]
If $r\geq r_1$, $\wt{r}\geq\wt{r}_1$, $\rho\in\R$,
\[\rho+n\left(\frac{1}{2}-\frac{1}{r}\right)-\frac{1}{q}
=1-\left(\wt{\rho}_1+n\left(\frac{1}{2}-\frac{1}{\wt{r}}\right)-\frac{1}{\wt{q}}\right),\]
$F\in\st{\wt{q}'}{\bes{-\wt{\rho}}{\wt{r}'}{2}}$ and $u$ is a weak solution of the inhomogeneous wave equation
\[-u''(t)+\Delta u(t)=F(t),\qquad u(0)=0,\qquad u'(0)=0,\]
then
\begin{equation}\label{est:inhomogeneous strichartz for wave sharp}
\norm{u}\stn{q}{\bes{\rho}{r}{2\vee q}}\lesssim\norm{F}\stn{\wt{q}'}{\bes{-\wt{\rho}}{\wt{r}'}{2\wedge q}}.
\end{equation}
\end{corollary}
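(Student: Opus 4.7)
The plan is to reuse the abstract framework already deployed in Corollary~\ref{cor:inhomogeneous strichartz for wave}, but to apply the sharp case (ii) of Theorem~\ref{th:inhomogeneous} rather than case (i). Set $\Hi=\B_0=\bes{0}{2}{2}$, $\B_1=\bes{(n+1)/4}{1}{2}$, $\sigma=(n-1)/2$, and $U(t)=e^{it\omega}$; the energy and dispersive estimates for this choice were already verified in the proof of Corollary~\ref{cor:wave strichartz} via stationary phase and Littlewood--Paley truncation. Introduce the parameters $\theta=1-2/r_1$ and $\wt\theta=1-2/\wt r_1$. The wave $n$-acceptability of $(q,r_1)$ and $(\wt q,\wt r_1)$ becomes $\sigma$-acceptability, the scaling condition becomes (\ref{eq:inhomogeneous scaling condition}), and the strict inequalities $(n-3)/r_1<(n-1)/\wt r_1$ and $(n-3)/\wt r_1<(n-1)/r_1$ translate to exactly the hypothesis (\ref{eq:sharp theta}). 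Theorem~\ref{th:inhomogeneous}(ii) therefore yields
\[
\norm{(TT^*)_RF}\stn{q}{(\B_{\theta,q'})^*}\lesssim\norm{F}\stn{\wt q'}{\B_{\wt\theta,\wt q'}}.
\]

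The next step is to identify these abstract interpolation spaces with concrete Besov-type spaces. Using real-interpolation duality (which applies since $q,\wt q\in(1,\infty)$ and $\B_0\cap\B_1$ is dense) we write $(\B_{\theta,q'})^*=(\B_0^*,\B_1^*)_{\theta,q}$, and $\B_0^*=\bes{0}{2}{2}$, $\B_1^*=\bes{-(n+1)/4}{\infty}{2}$ since Besov duality shifts $(r,\rho)\mapsto(r',-\rho)$. Lemma~\ref{lem:besov interpolation} (and the analogous identification at the $L^\infty$ endpoint) then gives
\[
\B_{\wt\theta,\wt q'}=\bes{\wt\gamma}{\wt r_1'}{\wt q',(\wt q')},\qquad (\B_{\theta,q'})^*=\bes{-\gamma}{r_1}{q,(q)},
\]
where $\gamma=(n+1)\theta/4$, $\wt\gamma=(n+1)\wt\theta/4$, $1/r_1=(1-\theta)/2$, and $1/\wt r_1'=(1+\wt\theta)/2$. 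Now the hypothesis $1/r_1\le1/q$ says $q\le r_1$, so by the Lorentz embedding $L^{r_1,q}\hookrightarrow L^{r_1,r_1}=L^{r_1}$ together with the sequence embedding $\ell^q\hookrightarrow\ell^{2\vee q}$, we obtain $\bes{-\gamma}{r_1}{q,(q)}\hookrightarrow\bes{-\gamma}{r_1}{2\vee q}$. Dually, $1/\wt r_1\le1/\wt q$ gives $\wt r_1'\le\wt q'$, so $L^{\wt r_1'}=L^{\wt r_1',\wt r_1'}\hookrightarrow L^{\wt r_1',\wt q'}$, and $\ell^{2\wedge q}\hookrightarrow\ell^{\wt q'}$ is trivial, yielding $\bes{\wt\gamma}{\wt r_1'}{2\wedge q}\hookrightarrow\bes{\wt\gamma}{\wt r_1'}{\wt q',(\wt q')}$. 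Combining these with the abstract inequality produces
\[
\norm{(TT^*)_RF}\stn{q}{\bes{-\gamma}{r_1}{2\vee q}}\lesssim\norm{F}\stn{\wt q'}{\bes{\wt\gamma}{\wt r_1'}{2\wedge q}}.
\]

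To finish, apply the $\omega^\mu$ intertwining argument from the proof of Corollary~\ref{cor:wave strichartz}: because $\omega^\mu$ commutes with $U(t)$ and is an isomorphism $\bes{\rho}{r}{s}\to\bes{\rho-\mu}{r}{s}$ for every $s$, the preceding estimate converts, after accounting for the $\omega^{-1}$ factor in the wave decomposition (\ref{eq:wave decomposition}), into the desired estimate at $(r_1,\wt r_1)$ with the prescribed smoothness parameters $\rho,\wt\rho$. Finally, Lemma~\ref{lem:besov embedding} raises $r_1$ to any $r\ge r_1$ (and similarly $\wt r_1$ to $\wt r$) with matching shifts in $\rho$ and $\wt\rho$, giving (\ref{est:inhomogeneous strichartz for wave sharp}). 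The main technical obstacle is ensuring that Lemma~\ref{lem:besov interpolation} and the real-interpolation duality extend cleanly to the dual side, where $\B_1^*$ is based on $L^\infty$ rather than $L^{r_1}$ with $r_1<\infty$; this requires a careful endpoint version of the Besov interpolation identity. The exclusion of $n=3$ enters because in that dimension $\sigma=1$, at which the boundary configuration $\wt\theta=1$ collapses the condition $(\sigma-1)(1-\wt\theta)<\sigma(1-\theta)$, and the corresponding endpoint Strichartz estimate for the wave equation fails.
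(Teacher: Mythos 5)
Your proposal follows the same route as the paper: apply Theorem~\ref{th:inhomogeneous}(ii) with $(\B_0,\B_1)=(\bes{0}{2}{2},\bes{(n+1)/4}{1}{2})$ and $\sigma=(n-1)/2$, identify $(\B_{\theta,q'})^*$ and $\B_{\wt\theta,\wt q'}$ via real-interpolation duality and the Triebel formula, concretize through Lorentz-space and $\ell^s$ embeddings (using the constraints $q\le r_1$ and $\wt r_1'\le\wt q'$), intertwine with $\omega^\mu$ to insert the regularity parameters $\rho,\wt\rho$, and finish with Lemma~\ref{lem:besov embedding} to raise $r_1,\wt r_1$ to $r,\wt r$. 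The embedding directions you write out (inner Lorentz index $q$ going to $L^{r_1}$ on the output side, inner $L^{\wt r_1'}$ going into $L^{\wt r_1',\wt q'}$ on the source side, with outer indices $\ell^q\hookrightarrow\ell^{2\vee q}$ and $\ell^{2\wedge q}\hookrightarrow\ell^q$) are the correct ones matching the statement, so the argument is sound and essentially the paper's.
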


\begin{proof}
We apply Theorem \ref{th:inhomogeneous} (ii) when $(\B_0,\B_1)=(\bes{0}{2}{2},\bes{(n+1)/4}{1}{2})$ and $\sigma=(n-1)/2$.

First suppose that $r=r_1$ and $\wt{r}=\wt{r}_1$. To obtain (\ref{est:inhomogeneous strichartz for wave sharp}) from the abstract Strichartz estimate (\ref{eq:str4}), we apply the embeddings
\[\B_{\wt{\theta},\wt{q}'}\supseteq\bes{(n+1)\wt{\theta}/4}{\wt{r}'}{2\vee\wt{q}',(\wt{q}')}
\supseteq\bes{(n+1)\wt{\theta}/4}{\wt{r}'}{2\vee\wt{q}'}\]
and
\[(\B_{\theta,q'})^*=(\bes{0}{2}{2},\bes{-(n+1)/4}{\infty}{2})_{\theta,q}
\subseteq\bes{-(n+1)\theta/4}{r}{2\wedge q,(q)}\subseteq\bes{-(n+1)\theta/4}{r}{2\wedge q}\]
(see \cite[p.~183]{hT78}, \cite[Theorem 3.7.1]{BL76} and \cite[p. 2]{BL76}) and follow the general approach of the proofs of the other corollaries in this section. Here we have taken $1/\wt{r}'=(1-\wt{\theta})/2+\wt{\theta}/1$, $1/r=(1-\theta)/2+\theta/\infty$, imposed the restrictions $\wt{r}'\leq\wt{q}'$ and $q\leq r$ and used the fact that $\wt{q}'=q$.

Suppose now that $r>r_1$ and $\wt{r}>\wt{r}_1$. To obtain (\ref{est:inhomogeneous strichartz for wave sharp}), simply apply Besov embedding (Lemma \ref{lem:besov embedding}) to the result obtained for the case when $r=r_1$ and $\wt{r}=\wt{r}_1$.
\end{proof}

\begin{remark}
In all the Strichartz estimates given in this section, one may exchange the infinite time interval $\R$ appearing in the spacetime norms with a finite time interval $I$ or $J$. This is done by redefining each $U(t)$ as $1_I(t)U(t)$, where $1_I$ is the characteristic function of $I$ on $\R$, and by redefining $F$ as $1_JF$.
\end{remark}

\begin{remark}\label{rem:KGE}
When Theorem \ref{th:inhomogeneous} (i) is applied to the inhomogeneous Klein--Gordon equation
\[-u''(t)+\Delta u(t)-u=F(t),\qquad u(0)=u''(0)=0,\qquad t\geq0,\]
the range of inhomogeneous Strichartz estimates given by Nakamura and Ozawa \cite[Proposition 2.1]{NO01} is slightly improved. These estimates are obtained in a manner analogous to the wave equation using Besov space norms, rather than homogeneous Besov space norms. For the required dispersive estimate, see \cite[pp.~261--262]{NO01}. A precise statement of our corollary and other details of its proof are available in \cite[Section 5.8]{rT08}.
\end{remark}

\bigskip

\noindent\textit{Acknowledgements.} The author would like to thank Michael Cowling for introducing him to the field of Strichartz estimates and Andrew Hassell for the interest he took in this work.

\providecommand{\bysame}{\leavevmode\hbox to3em{\hrulefill}\thinspace}
\providecommand{\MR}{\relax\ifhmode\unskip\space\fi MR }
% \MRhref is called by the amsart/book/proc definition of \MR.
\providecommand{\MRhref}[2]{%
  \href{http://www.ams.org/mathscinet-getitem?mr=#1}{#2}
}
\providecommand{\href}[2]{#2}

\end{document}